\def\namedlabel#1#2{\begingroup
    #2%
    \def\@currentlabel{#2}%
    \phantomsection\label{#1}\endgroup
}
\newtheorem{remark}{Remark}[section]
\newtheorem{definition}{Definition}[section]
\newtheorem{assumption}{Assumption}[section]
\newtheorem{theorem}{Theorem}[section]
\newtheorem{lemma}[theorem]{Lemma}
\newtheorem{proposition}[theorem]{Proposition}
\newtheorem{corollary}[theorem]{Corollary}
\def\be{\begin{equation}}
\def\ee{\end{equation}}
\def\cR{\mathbb{R}}
\def\intO{\int_\Omega}
\def\intT{\int_0^T}
\def\spaceH{L^2(\Omega)}
\def\spaceV{H^1(\Omega)}
\def\spaceVp{H^1(\Omega)^*}
\def\Uad{\mathcal{U}_{\rm ad}}
\def\Uinfty{L^\infty(0,T;\cR^n)}
\def\Utwo{L^2(0,T;\cR^n)}
\def\ub{\bar{u}}
\def\pb{\bar{p}}
\def\rhob{\bar{\rho}}
\def\eps{\varepsilon}
\def\div{{\rm div}}
\def\ds{\displaystyle}
\newcommand{\tcr}{\textcolor{red}}
\newcommand{\fredi}{\textcolor{cyan}}
\title[Optimal Control of the Fokker-Planck equation]{First and Second Order Optimality Conditions for the Control of Fokker-Planck Equations$^*$}
\author[M.S. Aronna]{M. Soledad Aronna}
\address{M.S. Aronna,  Escola de Matem\'atica Aplicada, Funda\c{c}\~ ao Get\'ulio Vargas,  Rio de Janeiro 22250-900, Brazil
 }
\email{soledad.aronna@fgv.br}
\author[F. Tr\"oltzsch]{Fredi Tr\"oltzsch}
\address{F. Tr\"oltzsch, Institut f\"ur Mathematik, Technische Universit\"at Berlin, Stra{\ss}e des 17. Juni 136, D-10623 Berlin, Germany}
\email{troeltz@math.tu-berlin.de}
\thanks{$^*$This is a longer version of the article with the same title that will appear in ESAIM: Control, Optimisation and Calculus of Variations. The differences are in Subsection 6.5 on Second order sufficient conditions and are detailed at the beginning of page 21. \\
The first author was supported by  CAPES (Brazil) and by the Alexander von Humboldt Foundation (Germany).}
\begin{document}

\maketitle

\begin{flushright}
{\em \today}
\end{flushright}

\begin{abstract}
In this article we study an optimal control problem subject to the Fokker-Planck equation
\[
\partial_t \rho - \nu \Delta \rho - {\rm div } \big(\rho B[u]\big) = 0.
\] 
The control variable $u$ is time-dependent and possibly multidimensional, and the function $B$ depends on the space variable and the control. The cost functional is of tracking type and includes a quadratic regularization term on the control. For this problem, we prove existence of optimal controls and  first order necessary  conditions. Main emphasis is placed on second order necessary and sufficient conditions.
\end{abstract}

\section{Introduction}

In this article we prove first and second order optimality conditions for a control problem subject to the Fokker-Planck equation
\be
\label{FPintro}
\partial_t \rho - \nu \Delta \rho - {\rm div } \big(\rho B[u]\big) = 0,
\ee
where $u$ is a time-dependent possibly multidimensional control, and $B[u](x)=c(x) + b(x)\otimes u$ is defined by given vector functions $c$ and $b$.

 Fokker-Planck equations arise in many situations in which a large number of agents is involved. More precisely, these equations are known to describe the time evolution of the probability density function of agents, where the motion of each of them is modelled by a stochastic differential equation.
 In particular, Fokker-Planck equations are present in models of mass opinion dynamics \cite{AlbiPareschiZanella2014}, tumor growth \cite{BoseTrimper2009,Herty2012}, bird flocks movement \cite{DuanFornasierToscani2010} and various biological events \cite{SchienbeinGruler1993,Chavanis2008nonlinear}, among others. The recent survey \cite{Furioli2017} describes several applications of the Fokker-Planck equation to different socio-economic phenomena. It is worth mentioning that some of these articles already analyze control problems associated to their models, as {\em e.g.}  \cite{Herty2012,AlbiPareschiZanella2014}.

Another motivation for investigating  optimal control problems governed by Fok\-ker-Plack equations comes from Mean Field Games (MFG) theory \cite{LasryLions2007,GomesSaude2014}. 
It has been observed \cite[Section 4]{LasryLions2007} (see also \cite{cardaliaguet2010notes,Ryzhik2018}) that, under a specific choice of the objective functional, the MFG system can be interpreted as a first order optimality system for the control of a Fokker-Planck equation.

\if{Optimal control problems governed by  \eqref{FPintro} have been recently studied in a few articles.  A problem with one-dimensional piecewise constant controls is treated in \cite{AnnunziatoBorzi2010}, where the authors propose control strategies based on a receding horizon model predictive control framework. 
That work was extended later in \cite{AnnunziatoBorzi2013} to the case of multidimensional constant control. A Fokker-Planck equation with a space-dependent time-independent control is investigated in \cite{RoyAnnunziatoBorzi2016fokker}. There, the authors prove existence of an optimal control,  formulate a first-order optimality system, and propose a numerical scheme along with simulations. These three last article focus mainly on numerics. We also refer the reader to the survey \cite{AnnunziatoBorzi2018} for a review on Fokker-Planck control frameworks. {\em Fredi: I added these two sentences. If you think it is excessive, please cut what you believe is reasonable.}}}\fi

For a review on Fokker-Planck control frameworks, we refer the reader to the survey \cite{AnnunziatoBorzi2018} and to the references therein. 
Optimal control problems governed by  \eqref{FPintro} have been recently studied in  quite a number of articles. 
For numerical control strategies, we mention in particular \cite{AnnunziatoBorzi2010} and \cite{AnnunziatoBorzi2013}, where piecewise constant 
one-dimensional and multidimensional constant controls, respectively, are discussed. The case of space-dependent time-independent control is investigated in \cite{RoyAnnunziatoBorzi2016fokker}.
Moreover, we refer to the following papers that are closer to our control problem and mainly concentrate on aspects of the analysis.  
For a more general setting with time- and space-dependent control, existence results and first order optimality conditions are proved in \cite{FleigGuglielmi2017} and \cite{AlbiChoiFornasierKalise2017}. Their results on  first order analysis are similar to ours.
We also mention  \cite{BreitenKunischPfeiffer2018}, where the authors show results on stabilization of \eqref{FPintro} through linearization (more details in Remark \ref{RemarkBiblio}).

Optimization problems associated to \eqref{FPintro} belong to the class of {\em bilinear optimal control}. This framework has been considered in {\em e.g.} \cite{CasasWachsmuthWachsmuth2018} for elliptic equations. On the other hand, the work \cite{AronnaBonnansKroener2018} dealt with infinite dimensional bilinear dynamical systems, but their results do not apply here. 

In this paper we provide first and second order optimality conditions for an optimal control problem associated to \eqref{FPintro}, under quite mild regularity assumptions on the data functions and the spatial domain
of the state equation. The cost functional is of tracking type and includes a {\em quadratic regularizing term} on the control.  The control is only time-dependent. We obtain our second order necessary and sufficient conditions by application of  results proved in \cite{CasasTroeltzsch2012} in an abstract framework.

Our paper is organized as follows. In Section \ref{S2}, we present the equation, the basic assumptions, show well-posedness and some other properties of the state equation. The optimal control problem is introduced in Section \ref{S3}, where existence of optimal solution is proved. In Section \ref{S4}, we discuss properties of the control-to-state mapping, while Section \ref{S5} presents the adjoint system and first order optimality conditions. 
Section \ref{S6} is devoted to second order analysis and contains our main theorems. Finally, in the Appendix \ref{appendix} we included some proofs of auxiliary results.

\subsection*{Notation} \label{S1}
Given a real interval $[0,T]$, a normed space $X$ and $p\in [1,\infty],$  we let $L^p(0,T;X)$ denote the Lebesgue space of $L^p$-functions and  write $C([0,T];X)$ for the space of continuous functions, both with domain $[0,T]$ and values in $X.$
When $X=\cR$ we just write $L^p(0,T)$ and, for any $m$, we let $\|\cdot\|_{p}$ denote the norm in $L^p(0,T;\cR^m)$.
 Analogously, for a set $\Omega \subseteq \cR^n,$ $L^p(\Omega;\cR^m)$ is the Lebesgue space of $L^p$-functions with domain $\Omega$ and values in $\cR^m$.  We omit $\cR^m$ when $m=1$ and the values range in $\cR,$  and we  use the  form $\|\cdot\|_{L^p(\Omega)^m}$ to denote the norm in $L^p(\Omega;\cR^n)$. 

Throughout the article, we consider the real Hilbert spaces $\spaceH$, $\spaceV$ and $\spaceVp.$ We let $(\cdot,\cdot)$ denote the scalar product in $\spaceH$ and  $\langle \cdot,\cdot\rangle$ the dual pairing between $\spaceVp$ and $\spaceV$. Other scalar products and pairings will be distinguished by specifying the spaces as subindexes. For two vectors $u, v \in \mathbb{R}^n$, the result of the componentwise multiplication $u\otimes v$ is defined by 
the vector $ w \in \mathbb{R}^n$ with components $w_i = u_i v_i$, for $i = 1,\ldots,n$.

\section{The controlled Fokker-Planck equation} \label{S2}

We consider the Fokker-Planck equation with initial and boundary conditions given by
\begin{align}
\label{FP}
\partial_t \rho(x,t) - \nu \Delta \rho(x,t) - {\rm div } \big(\rho(x,t) B[u(t)](x)\big) &= 0\quad \text{in } Q,\\
\label{FPinit} \rho(x,0) &= \rho_0(x)\quad \text{in } \Omega,\\
\label{PBNeumann}\Big(\nu \nabla \rho(x,t) + \rho(x,t) B[u(t)](x)\Big)\cdot n(x) &=0\quad \text{on } \Sigma,
\end{align}
where $\nu>0,$ $\rho_0 \in \spaceH,$
$\Omega \subset \cR^n$ is a bounded domain with Lipschitz boundary $\Gamma:= \partial \Omega,$ and we set $\Sigma := \Gamma \times (0,T),$ $Q:=\Omega \times (0,T).$
The control is $u=(u_1,\dots,u_{n}) \in L^\infty(0,T;\cR^{n})$, 
and the function $B \colon \cR^n \times \Omega \to \cR^n$ is given by
\[
{B}[u](x) := c(x) + {b}(x)\otimes {u},
\]
where $c,b \in L^\infty(\Omega;\cR^n)$ are fixed.
In \eqref{FP}, the differential operators $\Delta$ and ${\div }$ act only with respect to the spatial coordinate $x$. For definition and basic properties of the $\div$-operator we refer to \cite{GiraultRaviart2012}; in particular, we frequently use the Green's formula \cite[(2.17), p. 28]{GiraultRaviart2012}. In \eqref{PBNeumann}, $n$ denotes the outward normal unit vector on $\Gamma$.

\begin{remark}
\label{RemarkBiblio} Let us compare this equation with others considered in the literature. 

Breiten, Kunisch and Pfeiffer \cite{BreitenKunischPfeiffer2018} assume $B$ to take the form $\nabla_x V$ for a potential $V$ given by
\[
V(x,t) = G(x) + \gamma(x) u(t)
\]
for a scalar control $u$  and  study the infinite horizon stabilization problem. 
In \cite{Breiten_Kunisch_Pfeiffer2018_sicon}, the same authors investigate the value function associated to  a general class of infinite horizon optimal control problems that includes the control of Fokker-Planck equations.

Fleig and Guglielmi \cite{FleigGuglielmi2017} consider a distributed multidimensional control and adopt an homogeneous Dirichlet boundary condition. In that case, the measure $\int_\Omega \rho (t,x)dx $ is not preserved. For that problem, they show existence of optimal control  and first order optimality conditions. Similar results, for distributed control and for a non-flux boundary condition as ours, were obtained by Albi {\em et al.} \cite{AlbiChoiFornasierKalise2017}.

Equation \eqref{FP}, for a function $u$ that depends both on time and space, appears in second order Mean Field Games: see {\em e.g.} Gomes and Sa\'ude \cite[Theorem 2]{GomesSaude2014}, Lasry and Lions \cite[Section 2.6]{LasryLions2007}. 
\end{remark}

\subsection{Existence and uniqueness of the solution of the Fokker-Planck equation} 

We start by obtaining the weak formulation of \eqref{FP}-\eqref{PBNeumann} by standard calculations. 
In particular, we apply the formula 
\[
- \int_\Omega{\rm div} (\rho B[u])\varphi dx 
 = - \int_\Gamma \varphi \rho B[u] \cdot n ds  +   \int_\Omega \rho B[u] \cdot \nabla \varphi dx 
\]
that will frequently be needed. Multiplying \eqref{FP} by a sufficiently smooth $\varphi$, integrating by parts,  and using the boundary condition \eqref{PBNeumann}, we get
\[
\intO \partial_t \rho\varphi dx + a[u(t)](\rho,\varphi) =0,
\]
where, for each $u\in \cR^n,$  $a[u](\cdot,\cdot)$ is a bilinear mapping that to each pair $\psi,\varphi \in H^1(\Omega)$ associates the value
\be
\label{adef}
a[u](\psi,\varphi):= \int_\Omega\big(\nu \nabla \psi + \psi B[u]\big) \cdot \nabla \varphi dx.
\ee
We will work in the space
\be 
\label{Wdef}
W(0,T) :=\left\{ \rho \in L^2(0,T;H^1(\Omega)) : \partial_t \rho\in L^2(0,T;H^{1}(\Omega)^*)\right\},
\ee
equipped with the norm
\[
\|\rho\|_{W(0,T)} := \left(\|\rho\|^2_{L^2(0,T;H^1(\Omega))} + \left\|\partial_t \rho\right\|_{L^2(0,T;H^1(\Omega)^*)}^2 \right)^{1/2}.
\]
It is known that $W(0,T)$ is a Hilbert space with the scalar product
\[
(\rho,\varphi)_{W(0,T)} := \intT(\rho(t),\varphi(t))_{\spaceV} dt +
\intT \big( \partial_t \rho(t) , \partial_t \varphi(t) \big)_{\spaceVp} dt,
\]
which induces the norm $\|\cdot\|_{W(0,T)}.$ We shall also recall the following continuous embedding (see {\em e.g.} Chipot \cite[Theorem 11.4]{Chipot2012} or Dautray-Lions \cite[Theorem 1, page 473]{DautrayLions1992})
$$
W(0,T) \hookrightarrow C \big( [0,T];L^2(\Omega) \big)
$$
that will be of use throughout this article.

The weak formulation of \eqref{FP} can be rewritten as to find $\rho \in W(0,T)$ such that
\begin{align}
\label{varfor1} \frac{d}{dt} (\rho (\cdot), \varphi) + a[u(\cdot)](\rho(\cdot),\varphi) &=0\quad \text{on } \mathcal{D}'(0,T)\text{ for all } \varphi \in H^1(\Omega),\\
\label{initcond} \rho(0) &= \rho_0\quad \text{in } \spaceH.
\end{align}

For convenience, let us consider a general right-hand side $f\in L^2(0,T;H^1(\Omega)^*)$ in \eqref{varfor1} and study the existence and uniqueness of the solution of the equation
 \be
\label{varforf} 
\frac{d}{dt} ( \rho(\cdot), \varphi) + a[u(\cdot)](\rho(\cdot),\varphi) =\langle f(\cdot),\varphi\rangle\quad \text{on } \mathcal{D}'(0,T)\text{ for all } \varphi \in H^1(\Omega),
\ee
with initial condition \eqref{initcond}.

\begin{remark}
Note that \eqref{varforf} can also be expressed as
\be
\partial_t \rho+ a[u](\rho,\cdot ) = f \quad \text{in } L^2(0,T;\spaceVp).
\ee
\end{remark}

\begin{definition}
We call a function $\rho \in W(0,T)$ a {\em weak solution} of the Fokker-Planck equation \eqref{FP}, if it satisfies \eqref{varfor1}-\eqref{initcond}. The definition is formulated analogously for \eqref{varforf}-\eqref{initcond}, where the r.h.s. of the differential equation in \eqref{FP} was replaced by an arbitrary element $f$ of the space $L^2(0,T;H^1(\Omega)^*).$ 
\end{definition}

\begin{remark}
For practical matters, we mention the equivalent variational formulation of \eqref{FP}-\eqref{PBNeumann} according to  Ladyzhenskaya \cite{Ladyzhenskaya_etal1968} that is as  follows: $\rho$ is a function in $W_2^{1,0}(Q)$ such that
\begin{multline}
\label{varfor}
\iint_Q \Big\{ -\rho \partial_t \varphi+  \big( \nu\nabla \rho + \rho B[u]\big)\cdot\nabla \varphi \Big\} dx\, dt =\int_\Omega \rho_0 \varphi(\cdot ,0) d x,\\ \text{for all }\varphi \in W^{1,1}_2(Q) \text{ with } \varphi(\cdot,T)=0.
\end{multline}
Here, $W_2^{1,0}(Q)$  is the Banach space of all $\rho \in L^2(Q)$ that have the weak derivatives
$\partial_{x_i} \rho$ in $L^2(Q)$ for all $i \in \{1,\ldots,n\}$ and $W_2^{1,1}(Q)$
is the subspace of all $\rho \in W_2^{1,0}(Q)$ that also possess the derivative $\partial_t \rho$ in $L^2(Q)$. We refer to \cite{Ladyzhenskaya_etal1968} for the definition and the associated norms.
\end{remark}

\if{ 
\begin{proof}
From \eqref{FP} we get 
\be
\int_0^T \int_\Omega \big(\partial_t \rho  - \nu\Delta\rho - {\rm div}(\rho B) \big) \varphi dx dt = 0.
\ee
We have
\be
\label{var1}
\int_0^T \int_\Omega \partial_t \rho\varphi = -\int_0^T \int_\Omega \rho \dot\varphi dxdt -\int_\Omega \rho_0(x) \varphi(x,0) dx,
\ee
\be
\label{var2}
\begin{split}
-\int_0^T \int_\Omega  \nu\Delta\rho \varphi &= -\nu \int_0^T \int_\Omega {\rm div} (\varphi \nabla \rho) dx dt + \nu\int_0^T \int_\Omega \nabla \rho \cdot \nabla\varphi dx dt\\
& =  -\nu \int_0^T \int_{\Gamma} \varphi \nabla \rho \cdot n ds ds + \nu\int_0^T \int_\Omega \nabla \rho \cdot \nabla\varphi dx dt,
\end{split}
\ee
\be
\label{var3}
\begin{split}
-\int_0^T \int_\Omega{\rm div} (\rho B)\varphi dx dt & = -\int_0^T \int_\Omega {\rm div}(\rho B\varphi)dx dt + \int_0^T \int_\Omega \rho B \cdot \nabla \varphi dx dt \\
& = -\int_0^T\int_\Omega \varphi \rho B \cdot n ds dt +  \int_0^T \int_\Omega \rho B \cdot \nabla \varphi dx dt.
\end{split}
\ee
Adding up \eqref{var1}, \eqref{var2} and \eqref{var3} yields the desired result.
\end{proof}
}\fi


Existence and uniqueness of a weak solution to \eqref{FP}-\eqref{PBNeumann}  was proved  by Breiten {\em et al.}  \cite{BreitenKunischPfeiffer2018} for controls $u \in L^2(0,T)$. The result was also given by Albi {\em et al.} \cite{AlbiChoiFornasierKalise2017} for the more general control space $L^2(0,T;L^\infty(\Omega)).$ Both articles assume smoothness of the domain's boundary. Here we work on a Lipschitz domain and with 
controls in $\Utwo$. This extension to $\Utwo$ of the existence and uniqueness result for the weak solution of \eqref{FP}-\eqref{PBNeumann} is useful for the second order analysis we present later on.
Hence, we prove the result again, but in a different way. We begin with bounded controls and then extend the result to controls in $L^2(0,T;\mathbb{R}^n)$ by a density argument that is based on {\em a priori} estimates, as done in \cite{BreitenKunischPfeiffer2018}.

\begin{theorem}
\label{ThmExistSol}
Given $\rho_0\in L^2(\Omega),$ $u\in \Uinfty$, 
and $f\in L^2(0,T;H^{1}(\Omega)^*),$
there exists a unique weak solution $\rho$ of \eqref{varforf}-\eqref{initcond} 
and it belongs to the space $W(0,T)$.
\end{theorem}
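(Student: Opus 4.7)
The plan is to embed the problem in the standard framework for linear parabolic equations on the Gelfand triple $\spaceV \hookrightarrow \spaceH \hookrightarrow \spaceVp$ and apply a classical existence and uniqueness result for abstract parabolic Cauchy problems with a time-dependent bilinear form (see \emph{e.g.} Dautray--Lions \cite{DautrayLions1992} or Chipot \cite{Chipot2012}). The three properties I would check are: (i) for a.e.\ $t\in(0,T)$, $a[u(t)](\cdot,\cdot)$ is a continuous bilinear form on $\spaceV$ with bound independent of $t$; (ii) for every fixed $\psi,\varphi\in\spaceV$, the map $t\mapsto a[u(t)](\psi,\varphi)$ is measurable; and (iii) a Gårding-type inequality
\[
a[u(t)](\varphi,\varphi) + \lambda\|\varphi\|^2_{\spaceH} \geq \alpha\|\varphi\|^2_{\spaceV} \qquad \text{for all } \varphi\in\spaceV,\ \text{a.e.\ } t\in(0,T),
\]
holds for constants $\alpha>0$ and $\lambda\geq 0$ depending only on $\nu$, $c$, $b$, and $\|u\|_\infty$.

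Property (i) is immediate from \eqref{adef}: since $c,b\in L^\infty(\Omega;\cR^n)$ and $u\in\Uinfty$, one has the uniform bound $\|B[u(t)]\|_{L^\infty(\Omega)^n}\leq \|c\|_{L^\infty(\Omega)^n} + \|b\|_{L^\infty(\Omega)^n}\|u\|_\infty$, so Cauchy--Schwarz yields $|a[u(t)](\psi,\varphi)|\leq M\|\psi\|_{\spaceV}\|\varphi\|_{\spaceV}$ with $M$ independent of $t$. Property (ii) follows from the measurability of $u$ and the affine dependence of $B[\cdot]$ on its argument. For the Gårding inequality, I would control the non-symmetric lower-order term by Young's inequality:
\[
\left|\intO \varphi\, B[u(t)]\cdot \nabla\varphi\, dx\right| \leq \tfrac{\nu}{2}\|\nabla\varphi\|^2_{L^2(\Omega)^n} + \tfrac{1}{2\nu}\|B[u(t)]\|^2_{L^\infty(\Omega)^n}\|\varphi\|^2_{\spaceH},
\]
so that $a[u(t)](\varphi,\varphi) \geq \tfrac{\nu}{2}\|\nabla\varphi\|^2_{L^2(\Omega)^n} - C\|\varphi\|^2_{\spaceH}$, which gives (iii) with $\alpha=\nu/2$ and $\lambda=C+\nu/2$.

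Once (i)--(iii) are in hand, the cited theorems produce a unique $\rho\in W(0,T)$ solving \eqref{varforf}--\eqref{initcond}. If a purely coercive formulation is preferred, the substitution $\tilde\rho(t):=e^{-\lambda t}\rho(t)$ converts the problem into an equivalent one with bilinear form $a[u(t)]+\lambda(\cdot,\cdot)_{\spaceH}$, which is coercive on $\spaceV$ by (iii), and the standard coercive Lions theorem applies verbatim. The initial condition $\rho(0)=\rho_0$ is meaningful via the embedding $W(0,T)\hookrightarrow C([0,T];\spaceH)$ recalled above. Uniqueness can alternatively be proved by testing the homogeneous equation for the difference of two solutions with that difference itself and invoking Gronwall's lemma together with (iii). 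I expect the only mildly delicate point to be the Gårding estimate; once it is established, the rest amounts to routine bookkeeping needed to match \eqref{varforf} with the abstract formulation in the reference.
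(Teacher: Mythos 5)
Your proposal is correct and follows essentially the same route as the paper: verify boundedness and a G\r{a}rding-type coercivity estimate for $a[u(t)]$ on the Gelfand triple (the coercivity obtained exactly as you do, by absorbing the lower-order term via Young's inequality), then invoke the classical Lions--Magenes/Chipot existence theorem for parabolic problems with time-dependent bilinear forms. The extra remarks you add (measurability in $t$, the exponential rescaling, Gronwall for uniqueness) are standard ingredients of the cited abstract theorem and do not change the substance.
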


\begin{proof}
We apply Lions-Magenes \cite[Theorem 4.1 and Remark 4.3, pp. 238-239]{LioMag68a}; see also Chipot \cite[Theorem 11.7]{Chipot2012}.

We first verify the assumptions of continuity and coercivity of the bilinear form $a[u]$ necessary to apply \cite[Theorem 4.1 and Remark 4.3, pages 238-9]{LioMag68a}. For $\rho,\varphi \in H^1(\Omega),$ we have
\be
\label{estarhophi}
\begin{split}
\big|a[u(t)](\rho,\varphi)\big| &\leq \left| \nu \int_\Omega \nabla \rho \cdot \nabla \varphi dx \right|  + \left| \int_\Omega  \rho B[u(t)]\cdot \nabla \varphi dx \right| \\
&\leq \nu\|\nabla \rho\|_{L^2(\Omega){^n}} \|\nabla \varphi\|_{L^2(\Omega){^n}} + \|B[u]\|_{L^\infty(Q){^n}} \|\rho\|_{L^2(\Omega)}\|\nabla \varphi\|_{L^2(\Omega)^n} \\
&\leq C\|\rho\|_{H^1(\Omega)} \|\varphi\|_{H^1(\Omega)},
\end{split}
\ee
where $C := \nu +  \|B[u]\|_{L^\infty(Q)^n}$. From  the latter estimate and Young's inequality, we also obtain, for any positive $M,$
\begin{equation*}
 \left| \int_\Omega  \rho B[u]\cdot \nabla \varphi ds \right|
 \leq \|B[u]\|_{L^\infty(Q)^n}  \left( \frac{M}{2}\|\rho\|_{L^2(\Omega)}^2 + \frac{1}{{2}M} \|\nabla\varphi\|^2_{L^2(\Omega)^n}\right).
 \end{equation*}
 Thus
\be
\label{acoercive}
\begin{split}
a[u(t)] & (\varphi,\varphi) 
= \nu  \int_\Omega |\nabla\varphi|^2 dx +  \int_\Omega  \varphi B[u(t)]\cdot \nabla \varphi dx\\
&\geq \nu \|\varphi\|^2_{H^1(\Omega)}  -\nu \|\varphi\|^2_{L^2(\Omega)}   -\|B[u]\|_{L^\infty(Q)^n}\left( \frac{M}{ 2}\| \varphi \|_{L^2(\Omega)}^2 + \frac{1}{{2}M} \|\nabla\varphi\|^2_{L^2(\Omega)^n}\right) \\
&\geq \gamma \|\varphi\|^2_{H^1(\Omega)} - \lambda \|\varphi\|_{L^2(\Omega)}^2,
\end{split}
\ee
if $M$ is chosen such that  $\displaystyle\gamma := \nu -\frac{\|B[u]\|_{L^\infty(Q)^n}}{2M}$ is positive, and we set $\lambda := \nu+ \ds \frac{M}{2}\, \|B[u]\|_{L^\infty(Q)^n.} $

We conclude from \eqref{estarhophi} and \eqref{acoercive} that the hypotheses of boundedness and coercivity of $a[u]$  in Lions-Magenes \cite[Theorem 4.1 and Remark 4.3, pp. 238-239]{LioMag68a} are satisfied and, therefore, their result can be applied. 
\end{proof}

Now we extend the existence result to $L^2$-controls. The proof via Gronwall's Lemma is inspired by that of \cite[Prop. 2.1]{BreitenKunischPfeiffer2018}.

\begin{theorem}\label{u2-case} 
For all $u \in L^2(0,T;\mathbb{R}^n)$, $f \in L^2(0,T;H^1(\Omega)^*)$,
and $\rho_0 \in L^2(\Omega)$,  the state equation \eqref{FP}-\eqref{PBNeumann} has a unique weak solution 
$\rho \in W(0,T)$. It obeys the estimate
\be
\label{W(0,T)}
\|\rho\|^2_{W(0,T)} \leq C_0 \Big(\|\rho_0\|^2_{L^2(\Omega)} +\|f\|_{L^2(0,T;\spaceVp)}^2\Big),
\ee
where the constant $C_0$ depends continuously on $\|u\|_2$ but is independent of $f$ and $\rho_0$.
\end{theorem}

\begin{proof} (i) First, we derive an {\em a priori} estimate for a solution $\rho$ of \eqref{FP}-\eqref{PBNeumann}. To this aim, we introduce a new function $\eta$ by $\rho(t) = e^{\nu t} \eta(t)$ and transform the Fokker-Planck equation \eqref{FP} into the equivalent equation
\[
\partial_t \eta - \nu \Delta \eta + \nu \eta- \div(\eta B[u]) = e^{-\nu t} f.
\]
Next, we test the weak formulation by  $\varphi = \eta$.  Select an arbitrary $t \in (0,T)$, and integrate over $(0,t)$ to get 
\begin{multline}
\int_0^t \frac{1}{2} \frac{d}{ds}\|\eta(s)\|_{L^2(\Omega)}^2ds + \nu \int_0^t \|\eta(s) \|_{H^1(\Omega)}^2 ds + \int_0^t \eta(s) B[u(s)] \cdot \nabla \eta(s) ds
\\= \int_0^t e^{-\nu s} \langle f(s),\eta(s)\rangle ds.  \label{Int-eq}
\end{multline} 
Notice that we have 
$
|B[u](x,t)| = |u(t)\otimes b(x) + c(x)| \le d_1 (|u(t)| + 1) \, \mbox{ a.e. in } Q,
$
since $b$ and $c$ are bounded. 
Applying Young's inequality  in a standard way to the third term on the l.h.s. and to the term in the r.h.s. of latter display,
and compensating the $H^1$-terms on $\eta$ on both sides of the inequality, we arrive at
\begin{multline} \label{aux*}
\frac{1}{2}\|\eta(t)\|_{L^2(\Omega)}^2+ \frac{\nu}{2} \int_0^t \|\eta(s)\|^2_{H^{1}(\Omega)}ds\\ \le 
\frac{1}{\nu}\|f\|_{L^2(0,T;H^1(\Omega)^*)}^2
+\frac{1}{2} \|\rho_0 \|_{L^2(\Omega)}^2 
+ \int_0^t \|\eta(s)\|_{L^2(\Omega)}^2\frac{d_1^2}{\nu}(|u(s)|+1)^2ds.
\end{multline}
By Gronwall's inequality for  $z(t) := \|\eta(t)\|_{L^2(\Omega)}^2$ and by the estimate  $(|u|+1)^2 \le 2 (|u|^2 + 1)$, the inequality
\[
\|\eta(t)\|_{L^2(\Omega)}^2 \le \left(\frac{2}{\nu}\|f\|_{L^2(0,T;H^1(\Omega)^*)}^2
+ \|\rho_0 \|_{L^2(\Omega)}^2\right) \exp\left(\int_0^t \frac{4d_1^2}{\nu} (|u(s)|^2+1)ds\right)
\]
is deduced. Therefore, since $t \in (0,T)$ was taken arbitrarily, 
\[
\|\eta\|_{L^\infty(0,T;L^2(\Omega))}^2 \le d_2 \left(\|f\|_{L^2(0,T;H^1(\Omega)^*)}^2
+ \|\rho_0 \|_{L^2(\Omega)}^2\right)
\]
follows in turn, where $d_2$ depends continuously on $\|u\|_2$. Inserting this estimate in \eqref{aux*}, an analogous inequality for 
$\|\eta\|_{L^2(0,T;H^1(\Omega))}$ is derived that implies an estimate for  $\|\partial_t \eta\|_{L^2(0,T;H^1(\Omega)^*)}$ in a standard way.  
\if{\fredi{In particular, here we use that the functional
\begin{equation}\label{reg_eta}
\varphi \mapsto \iint_Q \eta B[u]\cdot \nabla \varphi \, dxdt
\end{equation} 
is continuous in $L^2(0,T;H^1(\Omega))$, if $u \in \Uinfty$ and $\eta \in L^2(0,T;H^1(\Omega))$.
}}\fi
Altogether, this finally permits to deduce that
\be\label{etarho}
\|\eta\|_{W(0,T)} \le c_3 \left(\|f\|_{L^2(0,T;H^1(\Omega)^*)}^2
+ \|\rho_0 \|_{L^2(\Omega)}^2\right),
\ee
where $c_3$ depends continuously on $\|u\|_2$. Transforming back by $\rho = e^{\nu t} \eta$ finally yields
 \eqref{W(0,T)} as {\em a priori} estimate.

(ii) Now we fix an arbitrary $u \in \Utwo$ and show that \eqref{FP}-\eqref{PBNeumann} has a solution.
To this end, we select a sequence $\{u_k\} \subset \Uinfty$ such that $\|u_k - u\|_2 \to 0$ for $ k \to \infty$. We can assume that $\|u_k\|_2 \le 2 \, \|u\|_2,$ for all $k \in \mathbb{N}$. 

Thanks to Theorem \ref{ThmExistSol}, to each $u_k,$ a unique solution $\rho_k \in W(0,T)$ of \eqref{FP}-\eqref{PBNeumann} exists. By the {\em a priori} estimate \eqref{etarho} obtained in (i) above, the sequence $\{\rho_k\}$ is bounded in 
$W(0,T)$ and we can select a subsequence that converges weakly to some $\rho \in W(0,T)$. Possibly after renumbering, we can assume that $\rho_k \rightharpoonup \rho$. 

Passing to the limit in the weak formulation \eqref{varfor}, it is easy to confirm that $\rho$ solves \eqref{FP}-\eqref{PBNeumann}. Here, the term $\rho_k u_k \otimes b \cdot \nabla \varphi$ needs some special care. Let us confirm that $\rho_k u_k \otimes b$ converges weakly to $\rho u \otimes b$ in $L^2(Q;\cR^n)$. Indeed, for all $v \in L^2(Q;\cR^n)$,
\[
\iint_Q \rho_k (u_k \otimes b) \cdot v\, dxdt 
= \int_0^T u_k \cdot \left(\int_\Omega \rho_k b \otimes v \,dx\right)dt
\underset{k \to \infty}{\to} \int_0^T u \cdot \left(\int_\Omega \rho b \otimes v \,dx\right)dt,
\]
since $\rho_k \rightharpoonup \rho$ in $L^\infty(0,T;L^2(\Omega))$ follows from the weak convergence of $\{ \rho_k \}$ to $\rho$ in 
$W(0,T)$, from the continuous embedding of this space in $C([0,T],L^2(\Omega))$, and from the fact that
the mapping $\rho \mapsto \int_\Omega \rho b \otimes v\, dx$ is linear and continuous from $L^\infty(0,T;L^2(\Omega))$ to $L^2(0,T;\mathbb{R}^n)$. The uniqueness follows in a standard way by the {\em a priori} estimates.
\end{proof}

\if{
\begin{proposition}
Given $\rho_0,u,f$ as in Theorem \ref{ThmExistSol}, the unique weak solution $\rho$ of \eqref{FP}-\eqref{PBNeumann} satisfies
\be
\label{intrho1}
\intO \rho(x,t) dx = \intO \rho_0(x) ds\quad \text{for a.a. } t\in (0,T).
\ee
\end{proposition}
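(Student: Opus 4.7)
The plan is to test the weak formulation against the constant function $\varphi\equiv 1$, which lies in $H^1(\Omega)$ since $\Omega$ is bounded. This is the natural choice because $\int_\Omega \rho(t)\,dx = (\rho(t),1)$, so plugging in $\varphi=1$ will produce exactly the quantity whose time-invariance we want to establish.

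First I would observe that with $\varphi\equiv 1$ one has $\nabla\varphi = 0$, so from the definition \eqref{adef} the bilinear form collapses:
\[
a[u(t)](\rho(t),1) = \int_\Omega \big(\nu\nabla\rho(t) + \rho(t) B[u(t)]\big)\cdot \nabla 1\,dx = 0.
\]
The weak formulation \eqref{varfor1} (with $f=0$) then reduces to
\[
\frac{d}{dt}\,(\rho(\cdot),1) = 0 \quad \text{in } \mathcal{D}'(0,T),
\]
which means the scalar function $t\mapsto \int_\Omega \rho(x,t)\,dx$ has vanishing distributional derivative on $(0,T)$, hence is constant (equal to some real number) almost everywhere.

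Finally, I would identify the constant via the initial condition. Since $\rho\in W(0,T)\hookrightarrow C([0,T];L^2(\Omega))$, the map $t\mapsto (\rho(t),1)$ is continuous on $[0,T]$, and \eqref{initcond} gives $(\rho(0),1) = (\rho_0,1) = \int_\Omega \rho_0(x)\,dx$. Combined with the constancy established above, this yields \eqref{intrho1} for every $t\in[0,T]$, in particular for a.a.\ $t\in(0,T)$.

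There is no real obstacle here; the only point to be careful about is the justification that $\varphi\equiv 1$ is an admissible test function, which follows from $\Omega$ being bounded so that $1\in H^1(\Omega)$, and the promotion of the "a.e." equality to an everywhere equality via the continuous embedding $W(0,T)\hookrightarrow C([0,T];L^2(\Omega))$ recalled after \eqref{Wdef}.
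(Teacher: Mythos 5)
Your proposal is correct and is exactly the paper's argument: the paper's proof consists of the single line ``This follows from \eqref{varfor1} by choosing $\varphi \equiv 1$,'' and you have simply spelled out the details of that choice (vanishing of the bilinear form, constancy of the distributional derivative, and identification of the constant via the continuous embedding $W(0,T)\hookrightarrow C([0,T];L^2(\Omega))$).
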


\begin{proof}
This known fact follows from \eqref{varfor1} by choosing $\varphi \equiv 1.$ 

\if{If not, the following proof holds.

To obtain the same result from \eqref{varfor}, we can do as follows. Fix $s\in (0,T).$ Consider a test function $\varphi:=\varphi(t)$, with $\varphi \in W^{1,1}_2(Q)$ and such that
\be
\varphi(t) = 
\left\{
\begin{split}
1,\quad &t\in [0,s],\\
-\frac{1}{\epsilon}(t-\epsilon-s),\quad & t\in (s,s+\epsilon],\\
0,\quad & t\in (s+\epsilon,T].
\end{split}
\right.
\ee
Then, the variational formulation \eqref{varfor} gives
\be
-\intT\intO \rho(x,t) \dot\varphi(t) dx dt = \intO \varphi_0(x)\varphi(0)dx,
\ee
which implies
\be
\int_s^{s+\epsilon} \frac{1}{\epsilon} \intO \rho(x,t) dx dt = \intO \varphi_0(x) dx.
\ee
Setting $R(t):= \intO \rho(x,t) dx,$ we can rewrite latter display as
\be
\frac{1}{\epsilon} \int_s^{s+\epsilon} R(t) dt = \intO \varphi_0(x) dx.
\ee
For a.e. $s\in (0,T)$, the l.h.s. of latter equation converges to $R(s),$ thus we get \eqref{intrho1} as desired.
}\fi
\end{proof}
}\fi

\begin{remark}[On the mass conservation and nonnegativity of the solutions]
Note that, by choosing $\varphi \equiv 1$ in \eqref{varfor1}, it follows that 
\[
\intO \rho(x,t) dx = \intO \rho_0(x) ds\quad \text{for a.a. } t\in (0,T),
\]
where $\rho_0$ is the initial condition and $\rho$ the corresponding weak solution of \eqref{FP}-\eqref{PBNeumann}.
Moreover, from  Chipot \cite[Theorem 11.9, p. 202]{Chipot2012} a Weak Maximum Principle for the weak solutions of \eqref{FP}-\eqref{PBNeumann} follows, from which we can deduce, in particular, that weak solutions are nonnegative whenever the initial condition is nonnegative.
\end{remark}

\if{
\begin{proposition}[Weak maximum principle]
\label{Th11.9-Chipot}
Fix $u\in \Uinfty,$ $\rho_0^1,\rho_0^2 \in L^2(\Omega)$ and $f^1,f^2 \in L^2(0,T;\spaceVp).$ Let $\rho_1,\rho_2$ be the weak solutions of \eqref{FP}-\eqref{PBNeumann} with control $u,$  associated to $\rho_0^1, f^1$ and $\rho_0^2,f^2,$ respectively. Suppose further that 
\[
\rho_0^1 \leq \rho_0^2\quad \text{and} \quad f^1 \leq f^2,
\]
where the second inequality means $\langle f_1,\varphi \rangle \leq \langle f_2,\varphi \rangle$ for every $\varphi \in L^2(0,T;H^1(\Omega))$ such that  $\varphi \geq 0$ a.e. in $Q.$
Then we have that
\[
\rho_1\leq \rho_2\quad \text{a.e. in }Q.
\]
\end{proposition}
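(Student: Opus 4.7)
The plan is to argue by a standard energy estimate applied to the positive part of the difference. Set $w := \rho_1 - \rho_2 \in W(0,T)$, $w_0 := \rho_0^1 - \rho_0^2 \le 0$ and $g := f^1 - f^2$, where by hypothesis $\langle g(t),\varphi\rangle \le 0$ for every $\varphi \in \spaceV$ with $\varphi \ge 0$ a.e. Since the equation is linear in the state for a fixed control $u$, $w$ is the weak solution of \eqref{varforf}--\eqref{initcond} with data $w_0$ and $g$; in particular, for a.a. $t\in (0,T)$,
\[
\langle \partial_t w(t),\varphi\rangle + a[u(t)](w(t),\varphi) = \langle g(t),\varphi\rangle \quad \text{for all } \varphi\in \spaceV.
\]

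The key step is to test with $\varphi = w^+(t) := \max\{w(t),0\}$. Since $w \in L^2(0,T;\spaceV)$, a classical truncation result (Stampacchia's lemma, see e.g.\ Chipot \cite{Chipot2012}) gives $w^+ \in L^2(0,T;\spaceV)$ with $\nabla w^+ = \chi_{\{w>0\}}\nabla w$ a.e., and the chain rule
\[
\tfrac{1}{2}\tfrac{d}{dt}\|w^+(t)\|_{\spaceH}^2 = \langle \partial_t w(t),w^+(t)\rangle \quad \text{for a.a. } t\in(0,T).
\]
Using $w\,w^+ = (w^+)^2$ and that $\nabla w^+$ vanishes on $\{w\le 0\}$, one computes
\[
a[u(t)](w(t),w^+(t)) = \nu\intO |\nabla w^+(t)|^2\,dx + \intO w^+(t)\,B[u(t)]\cdot \nabla w^+(t)\,dx.
\]
Combining these, and using the hypothesis $\langle g(t),w^+(t)\rangle \le 0$ (since $w^+\ge 0$), one obtains
\[
\tfrac{1}{2}\tfrac{d}{dt}\|w^+(t)\|_{\spaceH}^2 + \nu\|\nabla w^+(t)\|_{L^2(\Omega)^n}^2 \le -\intO w^+(t)\,B[u(t)]\cdot \nabla w^+(t)\,dx.
\]
The right-hand side is estimated by Young's inequality exactly as in \eqref{acoercive}, absorbing the gradient term into $\nu\|\nabla w^+\|^2$ and leaving a constant multiple of $\|w^+(t)\|_{\spaceH}^2$. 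Since $w^+(0) = (w_0)^+ = 0$ in $\spaceH$, Gronwall's lemma yields $\|w^+(t)\|_{\spaceH} = 0$ for a.a.\ $t$, i.e.\ $w\le 0$, which is the claim.

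The main obstacle I expect is the rigorous justification that $w^+$ is an admissible test function in the variational formulation and that the chain-rule identity above holds; once this standard fact (for $W(0,T)$-functions applied to the Lipschitz map $r\mapsto r^+$) is invoked, the remainder is a straightforward coercivity/Gronwall argument already carried out in the proof of Theorem~\ref{ThmEstimates}. A minor point is that the functional inequality $f^1\le f^2$ is stated for test functions in $L^2(0,T;\spaceV)$ and not pointwise in $t$; this is precisely what is needed to apply the inequality to $w^+\in L^2(0,T;\spaceV)$ after integrating the energy identity in time.
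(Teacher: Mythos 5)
Your proof is correct and is essentially the same argument the paper relies on: the paper's ``proof'' is a one-line citation of Chipot \cite[Theorem 11.9]{Chipot2012}, whose content is precisely this truncation/energy argument (test the equation for $w=\rho_1-\rho_2$ with $w^+$, use the coercivity estimate as in \eqref{acoercive}, and conclude by Gronwall). The one subtlety you flag --- that the ordering $f^1\le f^2$ is only given against test functions in $L^2(0,T;\spaceV)$ rather than pointwise in $t$ --- is correctly resolved by your remark that one applies it to $\chi_{(0,t)}w^+$ after integrating the energy identity in time.
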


\begin{proof} This result follows straightforwardly from Chipot \cite[Theorem 11.9, p. 202]{Chipot2012}.


\if{This proof is taken from Chipot {\cite[Theorem 11.9, p. 202]{Chipot2012}}.
We have
\be
\frac{d}{dt} \big( (\rho_1-\rho_2)(t),\varphi \big) + a[u](t;(\rho_1-\rho_2)(t),\varphi) = \left\langle \tcr{(f^1-f^2)(t)},\varphi \right\rangle,\quad 
\ee
\tcr{for all  $\varphi \in H^1(\Omega)$.}
We know, by Riesz Representation Theorem, that we can write 
\be
a[u](t;\rho,\cdot) = (A(t)\rho)(\cdot),
\ee
\tcr{where $A(t)$ is ...}

We get
\be
\frac{d}{dt} (\rho_1-\rho_2)\tcr{(t)} +A(t)(\rho_1-\rho_2)\tcr{(t)} =  \tcr{(f^1-f^2)(t)},\quad \text{in } L^2(0,T;\spaceVp).
\ee
In view of \cite[Theorem 2.8, p28]{Chipot2012} we have that $(\rho_1-\rho_2)^+ \in L^2(0,T;H^1(\Omega)),$ then
\be
\langle (\rho_1-\rho_2)_t,(\rho_1-\rho_2)^+ \rangle + a[u](\deleted{t}\tcr{\cdot};\rho_1-\rho_2, (\rho_1-\rho_2)^+ ) = \langle f^1-f^2,(\rho_1-\rho_2)^+ \rangle.
\ee
By \cite[Lemma 11.2]{Chipot2012},  $\displaystyle\frac{d}{dt} \|(\rho_1-\rho_2)^+\|^2_{L^2(\Omega)} = \frac{d}{dt} \langle \rho_1-\rho_2,(\rho_1-\rho_2)^+ \rangle,$ thus
\be
\label{ddtrhoi}
\frac{d}{dt} \|(\rho_1-\rho_2)^+\|^2_{L^2(\Omega)}  + a[u](\deleted{t} \tcr{\cdot};(\rho_1-\rho_2)^+, (\rho_1-\rho_2)^+ ) = \langle f^1-f^2,(\rho_1-\rho_2)^+ \rangle,
\ee
where we used $a[u](t;\rho_1-\rho_2, (\rho_1-\rho_2)^+ ) =a[u](t;(\rho_1-\rho_2)^+, (\rho_1-\rho_2)^+ )$ ({\bf XXX: Justify this}).
Using the property \eqref{acoercive} and $f^1\leq f^2,$ we obtain from \eqref{ddtrhoi} that
\be
\frac{d}{dt} \|(\rho_1-\rho_2)^+\|^2_{L^2(\Omega)}  - \lambda \|(\rho_1-\rho_2)^+\|^2_{L^2(\Omega)} \leq 0.
\ee
Setting $z(t) : = \|(\rho_1-\rho_2)^+(t)\|^2_{L^2(\Omega)},$ we have
$z'(t) - \lambda z(t) \leq 0, z(0)=0.$ Thus $z(t) \leq 0$ for a.e. $t\in [0,T].$ 
We obtain that $\|(\rho_1-\rho_2)^+(t)\|^2_{L^2(\Omega)} = 0$ for a.e. $t\in [0,T].$ This concludes the proof.
}\fi
\end{proof}

\begin{corollary}
The weak  solutions $\rho$ of \eqref{FP}-\eqref{PBNeumann} are nonnegative whenever the initial condition $\rho_0$ is nonnegative.
\end{corollary}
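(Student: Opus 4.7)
The plan is to deduce the corollary as an immediate application of the weak maximum principle (Proposition \ref{Th11.9-Chipot}) by comparing the given solution $\rho$ with the zero solution. Since the Fokker-Planck equation in the form \eqref{varforf} with $f=0$ is satisfied trivially by the constant function $\rho \equiv 0$, uniqueness (Theorem \ref{ThmExistSol}) tells us that the weak solution associated to the initial datum $\rho_0^1 := 0$ and right-hand side $f^1 := 0$ is exactly $\rho_1 \equiv 0$.

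Then I would set $\rho_0^2 := \rho_0$ (the given nonnegative initial distribution) and $f^2 := 0$, so that the corresponding weak solution $\rho_2$ is precisely the $\rho$ whose sign we wish to determine. The hypotheses of Proposition \ref{Th11.9-Chipot} are trivially satisfied: $\rho_0^1 = 0 \leq \rho_0 = \rho_0^2$ a.e.\ in $\Omega$ by assumption on $\rho_0$, and $f^1 = f^2 = 0$ so the inequality $f^1 \leq f^2$ holds automatically.

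Applying Proposition \ref{Th11.9-Chipot} then yields $\rho_1 \leq \rho_2$ a.e.\ in $Q$, which reads $0 \leq \rho$ a.e.\ in $Q$, as desired. There is no substantial obstacle here; the only thing worth noting is that one must invoke the uniqueness part of Theorem \ref{ThmExistSol} to identify the weak solution associated to zero data with the zero function, but this is immediate since $\rho \equiv 0$ clearly belongs to $W(0,T)$ and satisfies \eqref{varfor1}--\eqref{initcond} with $\rho_0 = 0$.
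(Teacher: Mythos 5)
Your proposal is correct and follows exactly the paper's own argument: the authors likewise deduce the corollary from Proposition \ref{Th11.9-Chipot} by taking $\rho_0^1\equiv 0$ (so that the comparison solution is the zero function) and comparing with the solution for the given nonnegative $\rho_0$. You merely spell out the identification of the zero-data solution with $\rho\equiv 0$ via uniqueness, which the paper leaves implicit.
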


\begin{proof}
The claim follows from Proposition \ref{Th11.9-Chipot} by setting $\rho_0^1\equiv0{;}$
see also Breiten {\em et al.} \cite{BreitenKunischPfeiffer2018}.
\end{proof}
}\fi

\section{The optimal control problem} \label{S3}
\setcounter{equation}{0}

In our optimal control problem, we minimize the cost functional
\be
\label{cost}
\begin{split}
J(\rho,u) :=&\frac{\alpha_Q}{2} \intT \|\rho(t)-\rho_Q(t)\|^2_{L^2(\Omega)} dt + \frac{\alpha_{\Omega}}{2}\|\rho(T)-\rho_\Omega\|_{L^2(\Omega)}^2 \\
&+ \sum_{i=1}^n \left(\frac{\gamma_i}{2}\|u_i\|^2_{2} + \beta_i \intT u_i(t) dt \right),
\end{split}
\ee
with {$\rho_Q \in L^2(Q), \rho_\Omega \in L^2(\Omega),$ and} $\beta_i,\gamma_i\geq 0$ for $i=1,\dots,n,$
subject to  the control constraints
\be
\label{uconstraint}
{u^{\min}}(t) \leq u(t) \leq u^{\max}(t)\qquad \text{for a.e. } t\in [0,T],
\ee
where the inequalities are defined componentwise, and the bounds $u^{\min},u^{\max}$ belong to $L^\infty(0,T;\cR^n).$ The set of admissible controls is
\be \label{uad}
\mathcal{U}_{\rm ad} : = \{u\in L^\infty(0,T;\cR^n): \, \eqref{uconstraint}\, \text{holds }\}.
\ee
The parameters $\beta_i,\gamma_i$, $i=1,\dots,n,$ are allowed to vanish simultaneously unless
second order sufficient optimality conditions are investigated. Then, all $\gamma_i$ have to be positive.
\begin{definition}\label{DefG}
Let us define the {\em control-to-state mapping}
\[
G\colon \Utwo \to W(0,T),
\]
that associates to each $u \in \Utwo$ the unique weak solution $\rho \in W(0,T)$ of \eqref{FP}-\eqref{PBNeumann}.
When necessary, we may write $G(u)$ to denote the state $\rho$ corresponding to $u$.
\end{definition}
Our optimal control problem can be rewritten as
\be
\label{P}\tag{P}
\min_{u \in \mathcal{U}_{\rm ad}} \,\, J(G(u),u).
\ee

We study two types of solutions, that we define next. We say that $\ub\in \Uad$ is an {\em $L^\infty$-local solution}  (resp., {\em $L^2$-local solution}) of \eqref{P} if there exists $\eps>0$ such that $J(G(\ub),\ub)\leq J(G(u),u)$ holds for every $u \in \Uad \cap B^\infty_\eps(\ub)$ (resp., $u \in \Uad \cap B^2_\eps(\ub)$), where $B^\infty_\eps(\ub)$ denotes the open ball in $\Uinfty$ (resp., in $\Utwo$) of radius $\eps$ centered at $\bar u$.

\subsection{Existence of optimal controls}

\begin{theorem}[Existence of an optimal control]
\label{ThmExist}
There exists (at least) one optimal control for \eqref{P}.
\end{theorem}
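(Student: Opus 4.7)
My plan is to apply the direct method of the calculus of variations. First I would observe that the cost functional $J$ is bounded below on $\Uad \times W(0,T)$: the tracking and Tikhonov terms are nonnegative, and the possibly-signed linear terms $\beta_i \int_0^T u_i\, dt$ are uniformly bounded because $\Uad$ is bounded in $\Uinfty$. Hence $j^* := \inf_{u \in \Uad} J(G(u),u) \in \mathbb{R}$ exists, and I can choose a minimizing sequence $\{u_k\}\subset \Uad$ with associated states $\rho_k := G(u_k)\in W(0,T)$.

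Next I would extract convergent subsequences. Since $\Uad$ is bounded in $\Uinfty$, up to a subsequence $u_k \rightharpoonup^* \bar u$ in $\Uinfty$; because $\Uad$ is convex and strongly closed in $\Utwo$, it is weakly closed there, and since the weak-$*$ limit in $L^\infty$ must coincide with the weak $L^2$ limit, $\bar u \in \Uad$. By the estimates of Theorem \ref{ThmEstimates} (whose constants depend continuously on $\|u\|_\infty$, hence are uniform on $\Uad$), the sequence $\rho_k$ is bounded in $W(0,T)$, so up to a further subsequence $\rho_k \rightharpoonup \bar\rho$ in $W(0,T)$. By the Aubin–Lions lemma, the embedding $W(0,T) \hookrightarrow L^2(Q)$ is compact, so $\rho_k \to \bar\rho$ strongly in $L^2(Q)$, and also $\rho_k(T) \rightharpoonup \bar\rho(T)$ in $L^2(\Omega)$ using the continuous embedding $W(0,T)\hookrightarrow C([0,T];L^2(\Omega))$.

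The key step is to show $\bar\rho = G(\bar u)$, i.e.\ to pass to the limit in the weak formulation
\[
\int_0^T \langle \partial_t \rho_k(t),\varphi\rangle\,\psi(t)\,dt + \int_0^T a[u_k(t)](\rho_k(t),\varphi)\,\psi(t)\,dt = 0
\]
for $\varphi \in H^1(\Omega)$ and $\psi \in C_c^\infty(0,T)$. The linear-in-$\rho$ parts (time derivative and the $\nu\nabla\rho\cdot\nabla\varphi$ term) pass by weak convergence. The delicate term is the bilinear one
\[
\int_0^T \!\!\int_\Omega \rho_k(x,t)\, b(x)\otimes u_k(t)\cdot\nabla\varphi(x)\,\psi(t)\, dx\, dt,
\]
which is a product of two weakly convergent sequences. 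I would handle it by writing it as $\int_0^T u_k(t)\cdot g_k(t)\,dt$ with $g_k(t) := \psi(t)\int_\Omega \rho_k(x,t) b(x)\odot \nabla\varphi(x)\,dx$ (componentwise). Using strong $L^2(Q)$-convergence of $\rho_k$ and Cauchy–Schwarz in $x$, one gets $g_k \to \bar g$ in $L^1(0,T)^n$, and combined with the weak-$*$ convergence of $u_k$ in $L^\infty$ this yields convergence of the product. The term with $c$ is linear in $\rho_k$ and passes trivially. The initial condition transfers to $\bar\rho$ via weak continuity of the trace at $t=0$.

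Finally, lower semicontinuity of $J$ closes the argument: the quadratic tracking terms and $\sum_i \gamma_i \|u_i\|_2^2$ are convex and continuous on $L^2$, hence weakly lower semicontinuous; the linear terms $\beta_i \int_0^T u_i\,dt$ are continuous under weak-$*$ $L^\infty$ (equivalently weak $L^2$) convergence; and $\rho_k(T) \rightharpoonup \bar\rho(T)$ in $L^2(\Omega)$ handles the terminal cost. Therefore $J(G(\bar u),\bar u) = J(\bar\rho,\bar u) \le \liminf_k J(\rho_k,u_k) = j^*$, and $\bar u$ is optimal. I expect the main obstacle to be the limit passage in the bilinear term; the compactness from Aubin–Lions, together with the linearity of $B[u]$ in $u$, is exactly what makes it work.
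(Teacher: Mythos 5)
Your proof is correct and follows essentially the same route as the paper: direct method, weak-$*$ compactness of the controls, Aubin--Lions to get strong $L^2(Q)$ convergence of the states, and the strong-$L^1$-times-weak-$*$ argument to pass to the limit in the bilinear term $\rho_k\, b\otimes u_k\cdot\nabla\varphi$. You are in fact slightly more explicit than the paper on the closedness of $\Uad$ under the limit and on the weak lower semicontinuity of $J$ (including the terminal term via $\rho_k(T)\rightharpoonup\bar\rho(T)$), which the paper leaves implicit.
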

\begin{proof}
In view of the control constraints \eqref{uconstraint}, the set of admissible controls $\Uad$ is bounded in $L^\infty(0,T;\cR^n).$  Hence, thanks to the estimate \eqref{W(0,T)}, the set of states associated to admissible controls is bounded in $W(0,T).$ Therefore, the cost functional $J$ is bounded from below on the set of admissible state-control pairs. Thus, there exists a minimizing sequence $\{(\rho_k,u_k)\} \subset W(0,T) \times \Uad,$ where $\rho_k:=G(u_k)$, such that 
\[
J(\rho_k,u_k ) \longrightarrow \inf_{u\in \Uad} J(G(u) , u).
\]
Since $\{u_k\}$ is bounded in $L^\infty(0,T;\cR^n),$ it contains a weakly$^*$  converging subsequence, thus, keeping the same index, we have
\[
u_k \overset{*}{\rightharpoonup} \bar{u}\quad \text{in } L^\infty(0,T;\cR^n)
\]
for some $\bar{u} \in L^\infty(0,T;\cR^n).$
The corresponding sequence of states $\{\rho_k\}$ forms a bounded sequence in $W(0,T).$ Thus, there exists $\bar\rho\in W(0,T)$ such that (extracting if necessary a subsequence)
\be
\label{limrhok}
\rho_k \rightharpoonup \bar\rho\quad \text{in } W(0,T).
\ee
The objective functional $J$ is weakly lower semicontinuous, so we obtain that 
\[
J(\rhob, \ub) \le \liminf_{k \to \infty} J(\bar \rho_k, \bar u_k)
\]
and hence, $\bar u$ is optimal provided that $\rhob$ is the associated state.

The main work of the proof is to show that $\rhob$ is the state associated to $\ub,$ and then $J(\rhob,\ub) = \inf_{u\in \Uad} J(G(u) , u).$ For this, we prove that we can pass to the limit in \eqref{varfor}.

Using Aubin-Lions' Lemma \cite{Aubin1963}, we can deduce from \eqref{limrhok} that $\{\rho_k\}$ has a subsequence converging  to $\rhob$ strongly in $L^2(Q)$. This is, keeping the same index for the subsequence,
\[
\rho_k \longrightarrow \bar\rho\quad \text{(strongly) in } L^2(Q).
\]
Putting all together, we have that $\rho_k \rightharpoonup \rhob$ weakly in $L^2(0,T;\spaceV)$,  $\ds \partial_t \rho_k \rightharpoonup \partial_t \rhob$ weakly in $L^2(0,T;\spaceVp),$ and $\rho_k\to\rhob$ strongly in $L^2(Q)$.

We next show that $\bar\rho = G(\ub).$

Given $\varphi \in W^{1,1}_2(Q),$ with $\varphi(\cdot,T)=0,$ we have
\begin{multline*}
\intT\left(\left\langle\partial_t \rho_k (t), \varphi(t) \right\rangle + \intO \nu\nabla\rho_k(t)\cdot \nabla\varphi(t)dx\right)dt \to \\
\intT\left( \left\langle \partial_t \bar\rho (t), \varphi(t) \right\rangle + \intO \nu\nabla\rhob(t)\cdot \nabla\varphi(t)dx \right)dt.
\end{multline*}
In the weak formulation of the state equation, it only remains to check the convergence of the part containing $B[u_k].$ To this aim,
for $i=1,\dots,n,$ we consider the terms 
\be
\label{termurho}
\rho_k  B_i[u_k] \frac{\partial \varphi}{\partial x_i} 
= \rho_k \big(c_i +  b_i u_{k,i} \big) \frac{\partial \varphi}{\partial x_i}.
\ee
It is easy to see that
$\ds \rho_k c_i \frac{\partial \varphi}{\partial x_i} \to \rhob c_i \frac{\partial \varphi}{\partial x_i}$ and  $\ds \rho_k b_i \frac{\partial \varphi}{\partial x_i} \to \rhob b_i \frac{\partial \varphi}{\partial x_i}$ strongly in $L^1(Q)$, 
since $\rho_k \to \rhob$ strongly in $L^2(Q)$,  $c_i,b_i\in L^\infty(\Omega)$ and $\ds\frac{\partial \varphi}{\partial x_i} \in L^2(Q).$ 
Therefore, in view of the weak$^*$ convergence of $\{u_k\}$ to $\ub$, we get that
\[
\iint_Q u_k  \rho_k b_i \frac{\partial \varphi}{\partial x_i} dx\, dt \to 
\iint_Q \ub \rhob b_i \frac{\partial \varphi}{\partial x_i} dx \, dt, \quad \text{for } k \to \infty.
\]
 Thus, $\rhob$ is the state associated to $\ub.$  This concludes the proof.
\end{proof}

We are not able to prove uniqueness of optimal controls, since the control-to-state mapping is nonlinear. 
Therefore, the reduced objective functional is nonconvex in general.

\section{Properties of the control-to-state mapping} \label{S4}
\setcounter{equation}{0}
In this section, we prove Fr\'echet differentiability of the control-to-state mapping $G$  and derive  the first order necessary optimality condition of Proposition \ref{PropFirst} as a corollary.

\subsection{Fr\'echet differentiability of the control-to-state mapping}
We start by proving the differentiability of $G$ by the Implicit Function Theorem.
To this aim, we define the mapping 
\be
\label{calG}
(\rho,u) \longmapsto  \mathcal{G}(\rho,u):= \left(\partial_t \rho+a[u](\rho,\cdot),\rho(0)-\rho_0\right)
\ee
from $W(0,T)\times \Utwo$ to $L^2(0,T;\spaceVp) \times \spaceH.$ The state equation can be viewed as the equation
\be
\mathcal{G}(\rho,u) =0.
\ee

\begin{proposition}
\label{GCinfty}
The mapping $\mathcal{G}$ is of class $C^\infty.$
\end{proposition}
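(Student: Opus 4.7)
The plan is to decompose $\mathcal{G}$ into a sum of a continuous affine map and a continuous bilinear map, and then invoke the standard fact that continuous multilinear maps between Banach spaces are of class $C^\infty$ (with vanishing derivatives of order higher than the degree of multilinearity).

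First I would isolate the parts of $\mathcal{G}$ that depend only on $\rho$. The derivative $\rho\mapsto \partial_t \rho$ is bounded linear from $W(0,T)$ to $L^2(0,T;\spaceVp)$ by the very definition of $W(0,T)$; the diffusion term $\rho\mapsto [\varphi\mapsto \int_\Omega \nu\nabla \rho\cdot\nabla\varphi\,dx]$ is bounded linear from $L^2(0,T;\spaceV)$ to $L^2(0,T;\spaceVp)$; the drift term with $c$, namely $\rho\mapsto [\varphi\mapsto \int_\Omega \rho\, c\cdot\nabla\varphi\,dx]$, is bounded linear by the same type of estimate as in \eqref{estarhophi} using $c\in L^\infty(\Omega;\cR^n)$; and the trace $\rho\mapsto \rho(0)$ is bounded linear from $W(0,T)$ to $\spaceH$ by the continuous embedding $W(0,T)\hookrightarrow C([0,T];\spaceH)$. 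The constant piece $-\rho_0$ is affine. All of these are trivially $C^\infty$.

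The remaining piece, which couples $\rho$ and $u$, is
\[
(\rho,u)\longmapsto T(\rho,u):= \Bigl[\varphi\longmapsto \int_\Omega \rho\,(b\otimes u)\cdot \nabla \varphi\,dx\Bigr],
\]
which is \emph{bilinear} in $(\rho,u)$. I would check that it is bounded from $W(0,T)\times \Uinfty$ into $L^2(0,T;\spaceVp)$: for a.a.\ $t$,
\[
\|T(\rho,u)(t)\|_{\spaceVp} \le \|\rho(t)\,b\otimes u(t)\|_{\spaceH^n}\le \sqrt{n}\,\|b\|_{L^\infty(\Omega)^n}\,|u(t)|\,\|\rho(t)\|_{\spaceH},
\]
so, integrating in time,
\[
\|T(\rho,u)\|_{L^2(0,T;\spaceVp)}\le \sqrt{n}\,\|b\|_{L^\infty(\Omega)^n}\,\|u\|_\infty\,\|\rho\|_{L^2(0,T;\spaceH)}\le C\,\|\rho\|_{W(0,T)}\,\|u\|_\infty.
\]
This is essentially the estimate already proved in \eqref{estdrho1}. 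Thus $T$ is a continuous bilinear mapping, hence $C^\infty$ with $DT(\rho,u)(\delta\rho,\delta u) = T(\delta\rho,u)+T(\rho,\delta u)$, $D^2T$ symmetric bilinear and constant in $(\rho,u)$, and $D^kT\equiv 0$ for $k\ge 3$.

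Since $\mathcal{G}$ is the sum of a continuous affine map in $(\rho,u)$ and the continuous bilinear map $T$, it is of class $C^\infty$. I do not expect any real obstacle: the only thing to verify carefully is the continuity of the bilinear coupling, which is immediate from $b\in L^\infty(\Omega;\cR^n)$ and the embedding $W(0,T)\hookrightarrow L^2(0,T;\spaceH)$.
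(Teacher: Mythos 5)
Your proof is correct and takes essentially the same approach as the paper: both isolate the only nonlinearity as the continuous bilinear coupling of $\rho$ with $b\otimes u$ (the paper phrases this as a second-order Taylor expansion of $(\rho,u)\mapsto \rho B[u]$ with continuous linear and quadratic parts and vanishing remainder) and verify its continuity using $b\in L^\infty(\Omega;\cR^n)$ together with the embedding of $W(0,T)$ into $C([0,T];L^2(\Omega))$. The remaining affine pieces are handled identically as bounded linear maps.
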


\begin{proof}
The first component $\mathcal{G}_1$ of $\mathcal{G}$ is defined by
\[
\mathcal{G}_1(\rho,u)(\varphi) := \left\langle \partial_t \rho , \varphi\right\rangle + \int_\Omega \nabla \rho \cdot \nabla \varphi \, dxdt + \int_\Omega \rho B[u] \cdot \nabla \varphi \, dxdt.
\]
Its first two summands clearly define linear and continuous mappings from $W(0,T)$ to $L^2(0,T;H^1(\Omega)^*)$, hence they are of class $C^\infty$. Therefore, it suffices to confirm that the operator
\[
(\rho,u) \mapsto \rho B[u]
\]
is of class $C^\infty$ from  $W(0,T)\times \Utwo$ to $L^2(Q;\cR^n)$. This, however, follows easily from the quadratic nature of $\rho B[u]$. For increments $\sigma \in W(0,T)$ and $v \in \Utwo$, we have
\[
\begin{aligned}
(\rho + \sigma)B[u + v] &= (\rho + \sigma)\big( c + b \otimes(u+v)\big) = \rho B[u] + \rho b \otimes v + \sigma(c + b \otimes u) + \sigma b \otimes v.
\end{aligned}
\]
This is a second order Taylor expansion of the mapping $(\rho,u) \mapsto \rho B[u]$ with continuous linear and quadratic parts, and vanishing remainder term. Let us exemplarily confirm the continuity of the quadratic term. We have 
\[
\|\sigma b \otimes v\|_{L^2(Q)^n} \le \| b \|_{L^\infty(\Omega)^n} \|\sigma\|_{C([0,T];L^2(\Omega))}\|v\|_2
\le c \|\sigma\|_{W(0,T)}\|v\|_2,
\]
hence the continuity of the quadratic form $\sigma b \otimes v$. Therefore $\mathcal{G}_1$ is of class 
$C^\infty$. The second component of $\mathcal{G}$ is obviously of class $C^\infty$.
\end{proof}

We thank the anonymous referee, who pointed out that our first proof in \cite{AronnaTroeltzsch2020v1} already covered the differentiability of $\mathcal{G}$ with respect to $u$ in $\Utwo$. This paved the way for proving the differentiability of $G$ in $\Utwo$ rather than in $\Uinfty$ as in \cite{AronnaTroeltzsch2020v1}.

\begin{corollary}
\label{CorGCinfty}
The control-to-state mapping $G\colon \Utwo \to W(0,T)$ is of class $C^\infty.$
\end{corollary}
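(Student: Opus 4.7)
The plan is to obtain the corollary as an application of the Implicit Function Theorem to the equation $\mathcal{G}(\rho,u) = 0$. I would fix an arbitrary $\bar u \in L^\infty(0,T;\mathbb{R}^n)$ and set $\bar\rho := G(\bar u) \in W(0,T)$, so that by Definition \ref{DefG} the pair $(\bar\rho, \bar u)$ satisfies $\mathcal{G}(\bar\rho, \bar u) = 0$. Proposition \ref{GCinfty} already supplies the smoothness of $\mathcal{G}$ needed for the theorem, so the substantive task is to verify that the partial Fr\'echet derivative $D_\rho \mathcal{G}(\bar\rho, \bar u) \colon W(0,T) \to L^2(0,T;H^1(\Omega)^*) \times L^2(\Omega)$ is a topological isomorphism.

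The computation of this derivative is short, because the first component of $\mathcal{G}$ depends linearly on $\rho$ once $u$ is fixed and the second component is simply $\rho(0) - \rho_0$. One obtains
\[
D_\rho \mathcal{G}(\bar\rho, \bar u)\,\sigma = \big(\partial_t \sigma + a[\bar u](\sigma,\cdot),\, \sigma(0)\big), \qquad \sigma \in W(0,T).
\]
Given any $(f,\sigma_0) \in L^2(0,T;H^1(\Omega)^*) \times L^2(\Omega)$, solving $D_\rho \mathcal{G}(\bar\rho, \bar u)\sigma = (f,\sigma_0)$ is precisely the problem of finding a weak solution of \eqref{varforf}--\eqref{initcond} with source $f$, initial datum $\sigma_0$, and control $\bar u$. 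Theorem \ref{ThmExistSol} grants existence and uniqueness of such a $\sigma \in W(0,T)$, and Theorem \ref{ThmEstimates} ensures that the solution depends continuously on the data. Hence $D_\rho \mathcal{G}(\bar\rho, \bar u)$ is a continuous linear bijection between Banach spaces, and the open mapping theorem promotes it to an isomorphism.

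With smoothness of $\mathcal{G}$ and invertibility of the partial derivative in hand, the Implicit Function Theorem furnishes a $C^\infty$ mapping from an $L^\infty$-neighborhood of $\bar u$ into $W(0,T)$ whose graph lies in $\mathcal{G}^{-1}(0)$. Invoking once more the uniqueness statement of Theorem \ref{ThmExistSol}, this local implicit function must coincide with the restriction of $G$ to that neighborhood; therefore $G$ is of class $C^\infty$ on a neighborhood of $\bar u$, and since $\bar u$ was arbitrary the conclusion follows. I do not anticipate any real obstacle: the only mildly subtle point is recognizing that the linearized equation and the original state equation share the same well-posedness theory, but this is transparent because $a[u]$ is linear in $\rho$, so no new structural difficulty arises upon linearization.
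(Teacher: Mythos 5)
Your argument is correct and follows essentially the same route as the paper: both rely on Proposition \ref{GCinfty} for the smoothness of $\mathcal{G}$, identify $\partial_\rho\mathcal{G}(\rho,u)z=(\partial_t z+a[u](z,\cdot),z(0))$, invoke Theorem \ref{ThmExistSol} (with the estimates of Theorem \ref{ThmEstimates}) to see that this is an isomorphism from $W(0,T)$ onto $L^2(0,T;H^1(\Omega)^*)\times L^2(\Omega)$, and conclude by the Implicit Function Theorem. Your additional remarks on the open mapping theorem and on identifying the local implicit function with $G$ via uniqueness are fine elaborations of steps the paper leaves implicit.
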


\begin{proof}
In view of Proposition \ref{GCinfty}, we have that $\mathcal{G}$ is of class $C^\infty$ and that
\[
\partial_\rho \mathcal{G}(\rho,u)z = \big( \partial_t z+a[u](z,\cdot),z(0)\big).
\]
Given $u\in \Utwo$, $f\in L^2(0,T;\spaceVp)$ and $z_0\in \spaceH,$ the system
\be
\begin{split}
\partial_t z+a[u](z,\cdot) &= f,\\
z(0) &= z_0,
\end{split}
\ee
is again a Fokker-Planck equation as the state equation and, therefore, it has a unique (weak) solution $z[z_0,f]$  that belongs to $W(0,T),$ and depends continuously on $z_0 \in L^2(\Omega)$ and on $f \in L^2(0,T;\spaceVp).$ 
Therefore, thanks to the existence and uniqueness Theorem \ref{u2-case}, $\ds \partial_\rho \mathcal{G}(\rho,u)$ is an isomorphism from 
$W(0,T)$ to $L^2(0,T;\spaceVp) \times \spaceH$.
Thus, the hypotheses of the Implicit Function Theorem are satisfied, and then $\mathcal{G}(\rho,u)=0$ implicitly defines the control-to-state operator $G: u \mapsto \rho$ that is itself of class $C^\infty.$
\end{proof}

In the next proposition and for other occasions throughout the remainder of the article, we will use the following type of functional: for  given $w \in L^2(\Omega;\cR^n)$, we introduce $d[w] \in H^1(\Omega)^*$ defined by
\[
d[w](\varphi) := - \int_\Omega w \cdot \nabla \varphi\, dx
\]
for all $\varphi\in \spaceV.$

\begin{proposition}
\label{PropG'}
Let $u\in \Utwo$ and $\rho:=G(u).$
For any $v\in \Utwo,$ we have that $G'(u)v=z,$ where $z$ belongs to $W(0,T)$ and is the weak solution of the {\em linearized state equation} at $(\rho,u)$ that is given by
\be
\label{lineq}
\begin{split}
\partial_t z+a[u](z,\cdot) &= d[\rho b\otimes v],\\
z(0) &= 0.
\end{split}
\ee
Moreover, the following estimate holds,
\be
\label{estimatez}
\|z\|_{{C([0,T];\spaceH)}} + \|z\|_{W(0,T)} 
\leq K \| b\|_{L^\infty(\Omega)^n} \|\rho\|_{{C([0,T];\spaceH)}} \|v\|_2, 
\ee
where $K$ depends continuously on $\|u\|_{2}$ but does not depend on $v$.
\end{proposition}

\begin{proof}
The fact that \eqref{lineq} possesses a unique weak solution $z$ in $W(0,T)$ follows from Theorem \ref{u2-case} as already observed in the proof of Corollary \ref{CorGCinfty} above.
The representation $G'(u)v=z$ follows from a direct application of the Implicit Function Theorem by differentiating
the state equation \eqref{FP}-\eqref{PBNeumann} with respect to the control.

It remains to estimate the $L^2(0,T;\spaceVp)$-norm of the r.h.s. function $d[\rho b\otimes v]$, in order to apply the estimates of Theorem \ref{u2-case}. Take any $\varphi \in \spaceV;$ then for a.a. $t \in [0,T]$ we have
\begin{equation*}
\begin{split}
\big| d[\rho(t) b\otimes v(t)](\varphi)\big| &= 
\left| \intO \rho(t) ( b\otimes v(t)) \cdot \nabla\varphi dx \right| \\
&\leq \sum_{i=1}^n \left| v_i(t)\right|  \| b_i\|_{L^\infty(\Omega)} \|\rho(t)\|_{L^2(\Omega)}  \|\varphi\|_{\spaceV}.
\end{split}
\end{equation*}
Thus, for a.e. $t\in [0,T],$
$$
\left\| d[\rho(t) b\otimes v(t)] \right\|_{\spaceVp}\leq \| b\|_{L^\infty(\Omega)^n} \|\rho(t)\|_{\spaceH} \sum_{i=1}^n \left| v_i(t)\right|,
$$
and hence
$$
\left\|d[\rho b\otimes v]\right\|_{L^2(0,T;\spaceVp)}\leq M \| b\|_{L^\infty(\Omega)^n} \|\rho\|_{L^2(0,T;\spaceH)} \|v\|_2,
$$
for some constant $M$ depending on the control dimension $n$.
The estimate \eqref{estimatez} and the continuous dependence of $K$ on $\|u\|_2$ follow from Theorem \ref{u2-case} with $f:= d[\rho b\otimes v]$ and $\rho_0 = 0$. This concludes the proof.
\end{proof}

The linearized state equation \eqref{lineq} at $(\rho,u)$ in the direction of $v$ is to be understood as
\be
\label{lineareq2}
\begin{split}
\intO \big( \partial_t z\varphi + (\nu\nabla z+zB[u])\cdot \nabla \varphi \big) dx &= - \intO \rho ( b\otimes v)\cdot \nabla \varphi dx \,\, \text{ for all } \varphi \in \spaceV,\\
z(0) &= 0,
\end{split}
\ee
or, in the strong form:
\be
\label{lineareq3}
\begin{split}
\partial_t z-\nu\Delta z-{\rm div} (zB[u]) &= \div \big( \rho (b \otimes v) \big)\quad \text{in } Q,\\
\big(\nu\nabla z+zB[u]\big)\cdot n &= - \big(\rho  b\otimes v\big)\cdot n\quad \text{in } \Sigma,\\
z(0) &= 0\quad \text{in } \Omega.
\end{split}
\ee

%

\begin{remark}
Following Ladyzhenskaya {\em et al.} \cite{Ladyzhenskaya_etal1968},  an equivalent variational formulation of the linearized state equation \eqref{lineareq2} is given as follows: $z$ is a function in $W_2^{1,0}(Q)$ such that
\begin{multline}
\label{lineareqLady}
\iint_Q \left( - z \partial_t \varphi + (\nu\nabla z+zB[u])\cdot \nabla \varphi \right) dx\, dt = - \iint_Q \rho (b\otimes v) \cdot \nabla \varphi dx\,dt\\
\text{for all }\varphi \in W^{1,1}_2(Q) \text{ with } \varphi(\cdot,T)=0.
\end{multline}
The additional property that $z \in W(0,T)$ is a standard consequence of \eqref{lineareqLady}.
\end{remark}


Let us define the {\em reduced cost functional} as
\[
F(u):=J(G(u),u).
\]
By the chain rule, the reduced cost functional $F$ is continuously Fr\'echet differentiable, since $J$ and $G$ have this property.

\begin{proposition}[First order necessary condition]
\label{PropFirst}
If $\ub$ is an $L^\infty$-local minimum for \eqref{P}, then
\be
\label{FirstAbstract}
F'(\ub)(u-\ub) \geq 0\quad \text{for every } u\in \mathcal{U}_{\rm ad}.
\ee
\end{proposition}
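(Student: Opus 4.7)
The plan is to exploit the convexity of $\Uad$ together with the Fréchet differentiability of $F$ established just before the proposition. Concretely, fix an arbitrary $u \in \Uad$ and consider the convex combination
\[
u_t := \ub + t(u - \ub) = (1-t)\ub + t u, \qquad t \in [0,1].
\]
Since both $\ub$ and $u$ lie in $\Uad$ and the pointwise box constraints \eqref{uconstraint} define a convex set, we have $u_t \in \Uad$ for every $t \in [0,1]$.

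Next I would use the fact that $\ub$ is an $L^\infty$-local solution: there exists $\eps > 0$ with $J(G(\ub),\ub) \leq J(G(v),v)$ whenever $v \in \Uad \cap B^\infty_\eps(\ub)$. Because
\[
\|u_t - \ub\|_{L^\infty(0,T;\cR^n)} = t \, \|u - \ub\|_{L^\infty(0,T;\cR^n)},
\]
we can pick $t_0 \in (0,1]$ small enough (for instance $t_0 < \eps/(1+\|u-\ub\|_\infty)$) so that $u_t \in \Uad \cap B^\infty_\eps(\ub)$ for all $t \in (0,t_0]$. Local optimality then gives
\[
F(u_t) - F(\ub) = J(G(u_t),u_t) - J(G(\ub),\ub) \geq 0 \qquad \forall\, t \in (0,t_0].
\]

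Finally, dividing by $t > 0$ and letting $t \to 0^+$, the Fréchet differentiability of $F$ (which is the composition of the smooth mapping $G$ from Corollary \ref{CorGCinfty} with the smooth quadratic functional $J$) yields
\[
0 \leq \lim_{t \to 0^+} \frac{F(\ub + t(u-\ub)) - F(\ub)}{t} = F'(\ub)(u - \ub).
\]
Since $u \in \Uad$ was arbitrary, this is precisely \eqref{FirstAbstract}. There is no real obstacle here; the proof is the standard variational inequality argument for smooth minimization over a convex set, and the only thing to verify is that the admissible direction $u - \ub$ gives rise to admissible perturbations $u_t$ that remain inside the $L^\infty$-neighbourhood where the local optimality is valid, which is immediate from the linearity of $t \mapsto u_t$.
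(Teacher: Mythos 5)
Your proof is correct and follows essentially the same route as the paper: both form the convex combination $\ub+t(u-\ub)$, use local optimality for small $t>0$, and divide by $t$ before passing to the limit via the Fr\'echet differentiability of $F$. Your version is just slightly more explicit about choosing $t$ small enough that the perturbation stays in the $L^\infty$-ball where local optimality holds.
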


\begin{proof}
The proof is standard and follows straightforwardly by writing the Newton quotient of $F$.
\end{proof}

The variational inequality \eqref{FirstAbstract} also holds for $L^2$-local minima, because
any $L^2$-local minimum is also an $L^\infty$-local one. 

\subsection{Lipschitz continuity of the control-to-state mapping}

We conclude this section by proving the local Lipschitz continuity of $G$ that mainly follows from its
differentiability. However, we are particularly interested in the continuous dependence of the Lipschitz constant on the control.

\begin{proposition}\label{Propdeltarho}The control-to-state mapping $G$ is locally Lipschitz, i.e. 
for any pair $u_1,u_2 \in L^{2}(0,T;\cR^n)$ with associated states $\rho_1:=G(u_1)$, $\rho_2:=G(u_2)$, one has
\be
\label{estdeltarho}
 \|\rho_2-\rho_1 \|^2_{W(0,T)} \leq C\|\rho_1\|_{C([0,T];\spaceH)}^2\|u_2-u_1\|_2^2,
\ee
where  $C$ depends continuously on $\|u_2\|_{2}$.
\end{proposition}


%
\begin{proof}  
Let us consider two weak solutions $\rho_1, \rho_2$ of \eqref{FP}-\eqref{PBNeumann}, associated to $u_1,u_2$, respectively.
Setting $\delta\rho:=\rho_2-\rho_1,$  we have, for any $\varphi\in \spaceV$,
\[
\frac{d}{dt} \intO (\delta\rho) \varphi dx + \intO\big(\nu\nabla(\delta\rho)+(\delta\rho)B[u_2]\big)\cdot \nabla\varphi dx ={-\intO \rho_1  b\otimes (u_2-u_1)}\cdot \nabla\varphi dx.
\]
The latter is equivalent to the linearized equation \eqref{lineareq2} for $z = \delta \rho $ and $v = u_2-u_1.$ Applying the estimate \eqref{estimatez} yields the desired result.
\end{proof}

Notice that \eqref{estdeltarho} implies an analogous inequality for $\|\rho_2-\rho_1\|_{C([0,T];\spaceH)}$
since $W(0,T)$ is continuously embedded in $C([0,T];\spaceH)$. The constant  neither depends on
$\|u_2\|_\infty$ as well, since the embedding constant does not depend on the control.

\section{The adjoint equation} \label{S5}
\setcounter{equation}{0}

\subsection{Definition of the adjoint equation}

By an adjoint state, the variational inequality \eqref{FirstAbstract} can be transformed to a more convenient form. To this aim, we introduce the following adjoint equation 
for the {\em adjoint state} $p$ associated with $(\rho,u)$:
\begin{align}
\label{adjoint}
- \partial_t p - \nu \Delta p + B[u]\cdot \nabla p &= {\alpha_Q}(\rho-\rho_Q)\quad \text{in } Q,\\
 \label{adjointOmega} p(T) &={\alpha_\Omega}( \rho(T)-\rho_\Omega)\quad \text{ in } \Omega,\\
\label{adjointSigma} \partial_np &= 0\quad \text{on } \Sigma.
\end{align}
The form of the adjoint equation can be found {\em e.g.} by application of a formal Lagrangian technique,
{\em cf.} \cite{Troltzsch2010}[chpt. 2.6]. 
 In weak formulation, the adjoint equation at $(\rho,u)$ is defined by 
\be 
\label{adjointeq}
\begin{split}
 \intO  \Big(- \varphi \, \partial_tp + \nu \nabla p \cdot \nabla \varphi  + {\varphi }\, B[u]\cdot \nabla p \Big)dx &= {\alpha_Q} \intO (\rho-\rho_Q)\varphi dx\\
 &\qquad\qquad \text{on } \mathcal{D}'(0,T)\,\,\text{for all } \varphi\in H^1(\Omega),\\
  p(T) &= {\alpha_\Omega} (\rho(T)-\rho_\Omega).
  \end{split}
 \ee
 The unique (weak) solution 
$p$ of \eqref{adjoint}-\eqref{adjointSigma} is called the {\em adjoint state} associated with $(\rho,u)$.
Note that, with  $f:= {\alpha_Q}(\rho-\rho_Q),$ \eqref{adjointeq} can be rewritten as
 \be
 \label{adjointeqgen}
\begin{split}
  -\partial_t p + a[u(\cdot)](\cdot,p) &= f\quad \text{in } L^2(0,T;\spaceVp),\\
  p(T) &= {\alpha_\Omega}(\rho(T)-\rho_\Omega),
  \end{split}
 \ee
 provided that $p$ enjoys the higher regularity $p \in W(0,T)$.

For the weak formulation we will also use the definition of Ladyzhenskaya {\em et al.} \cite{Ladyzhenskaya_etal1968}, because for given $u \in \Utwo$ we were only able to show the regularity $p \in W^{1,0}_2(Q)$, namely
\be 
\label{adjointeqL}
\begin{split}
 {\iint_Q} & \Big(p\, \partial_t \varphi  + \nu \nabla p \cdot \nabla \varphi  + {\varphi }\, B[u]\cdot \nabla p \Big)dxdt = \\
 & - \intO  {\alpha_\Omega} (\rho(T)-\rho_\Omega) \,\varphi(T) dx + {\alpha_Q} \iint_Q (\rho-\rho_Q)\varphi dxdt\quad \\
&\qquad\qquad \qquad \text{for all } \varphi\in {W^{1,1}_2(Q) \text{ with } \varphi(\cdot,0)}=0.
  \end{split}
 \ee
 
 \begin{proposition} \label{P5.1}
 Given $u \in \Uinfty$ and $f\in L^2(0,T;\spaceVp)$,  equation \eqref{adjoint}-\eqref{adjointSigma}, {with right-hand side $\alpha_Q(\rho-\rho_Q)$ of \eqref{adjoint} replaced by the general function $f$,}  has a unique weak solution $p \in W(0,T)$ and the following estimate holds, 
 \be
\|p\|_{W(0,T)} \leq C \big( \| \alpha_\Omega(\rho(T)-\rho_\Omega ) \|_{L^2(\Omega)} + \|f\|_{L^2(0,T;\spaceVp)}  \big),
 \ee
 where {$\rho := G(u)$ and} $C$ depends continuously on $\|u\|_2$.
 In particular,{when $f= {\alpha_Q}(\rho-\rho_Q)$}, one has
  \be
  \label{pest}
\|p\|_{W(0,T)} 
 \leq C \big(\|  \alpha_\Omega( \rho(T) - \rho_\Omega )\|_{L^2(\Omega)} + 
\| \alpha_Q(\rho-\rho_Q) \|_{L^2(Q)}  \big).
 \ee
\end{proposition}
 \begin{proof}
We first apply the transformation of time $\tau = T - t$ and $\tilde p(\tau) = p(T-\tau)$. Then the equation is transformed to a forward one. In particular, the terminal condition for $p$ becomes an initial condition for $\tilde p$. 
Then we proceed as in the proof of   Theorem \ref{ThmExistSol}. It is easy to confirm that the estimates $\eqref{estarhophi}$ and $\eqref{acoercive}$ 
remain true for the choice $\rho := \varphi$ and $\varphi:= p$, where $\varphi \in L^2(0,T,H^1(\Omega))$ is 
the test function and $p \in L^2(0,T,H^1(\Omega))$ is the desired solution. Notice that $p$ appears through a gradient. Now the existence of $p \in W(0,T)$ follows again from the result by Lions and Magenes \cite{LioMag68a}.
\end{proof}

\begin{remark}
For $u \in L^2(0,T;\mathbb{R}^n)$, we can show the existence of an adjoint state $p \in W^{1,0}_2(Q)$
that satisfies the weak formulation \eqref{adjointeqL}. To do this, we proceed as in the proof of 
Theorem \ref{u2-case} and approximate $u$ by a sequence of controls   $\{u_k\}\subset \Uinfty$. An
associated subsequence of adjoint states $p_k$ converges weakly in $ W^{1,0}_2(Q)\cong L^2(0,T;H^1(\Omega))$ to some $p \in W^{1,0}_2(Q)$, hence  $\nabla p_k$ converges weakly in $L^2(0,T;L^2(\Omega)^n)$ to $\nabla p$. Again, the bilinear term is the delicate point: as in \eqref{adjointeqL}, we are allowed to use more regular test functions $\varphi \in W^{1,1}_2(Q) \subset C([0,T];L^2(\Omega))$. We deduce 
\begin{equation} \label{auxp}
\iint_Q \varphi \, u_k\otimes b \cdot \nabla p_k\, dxdt \to \iint_Q \varphi \, u \otimes b \cdot \nabla p \, dxdt, \quad k \to \infty.
\end{equation}
Here,  we take advantage of the weak convergence $u_k\otimes b \cdot \nabla p_k \rightharpoonup  u \otimes b \cdot \nabla p$ in $L^1(0,T;L^2(\Omega))$ that fits to the regularity $\varphi \in C([0,T];L^2(\Omega))$. \\
We were not able to prove that $p$ belongs to $W(0,T)$. The reason is that, in contrast to what occurs for the state equation, here the mapping 
\begin{equation} \label{bilinear2}
\varphi \mapsto  \iint_Q \varphi \, u \otimes b \cdot \nabla p \, dxdt
\end{equation}
is not continuous on $L^2(0,T;H^1(\Omega))$ for  $u \in \Utwo$; notice that we only know $\nabla p \in L^2(Q;\cR^n)$. Therefore,  this linear functional does not belong to $L^2(0,T;H^1(\Omega)^*)$.

We should mention that, even for $p \in W(0,T)$, the integral \eqref{bilinear2} is not well defined with test functions $\varphi \in L^2(0,T;H^1(\Omega))$,
since we only know that $\nabla p \in L^2(0,T;L^2(\Omega)^n)$. We would need the additional regularity 
$\nabla p \in L^\infty(0,T;L^2(\Omega)^n)$, if $u \in \Utwo$. We did not try to prove this, since our controls
are essentially bounded. Therefore, in what follows we concentrate on the case of bounded controls. The main result of our paper, the second-order sufficient optimality conditions, is not influenced by this restriction.
For a similar setting but for infinite horizon control, in  \cite[Prop. 4.8]{Breiten_Kunisch_Pfeiffer2018_sicon}
the authors were able to show that $p$ is in $W(0,T)$.
\end{remark}

\subsection{First order necessary optimality conditions in terms of the adjoint state}
In this subsection we rewrite the first order condition of Proposition \ref{PropFirst} in terms of the adjoint state. For this, we show the following technical result. 
\begin{lemma}
Assume that  $u \in L^\infty(0,T;\cR^n)$ is given and let $\rho=G(u)$ be its associated state. 
Let $z$ be the weak solution of the linearized state equation \eqref{lineq} corresponding to $v\in \Utwo,$ and let $p \in W(0,T)$ be the weak solution of the adjoint equation \eqref{adjoint}-\eqref{adjointSigma}. Then
\be
\label{zp}
{\alpha_Q}\intT\intO(\rho-\rho_Q)z dx dt + {\alpha_\Omega}\intO(\rho(T)-\rho_\Omega)z(T)dx = -\intT \intO\rho( b\otimes v)\cdot \nabla p dx dt.
\ee
\end{lemma}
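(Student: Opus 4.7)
The plan is to carry out the standard duality (testing-and-subtracting) argument: test the linearized state equation by $p$, test the adjoint equation by $z$, subtract, and use the integration-by-parts formula in $W(0,T)$ together with the initial and terminal conditions to read off the claimed identity.

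More concretely, I would first take the weak formulation of the linearized equation \eqref{lineq} with test function $\varphi = p(t)$ and integrate over $[0,T]$ to obtain
\[
\int_0^T \langle \partial_t z(t), p(t)\rangle\, dt + \int_0^T a[u(t)]\bigl(z(t), p(t)\bigr)\, dt = -\int_0^T\!\!\int_\Omega \rho(t)\, (b\otimes v(t))\cdot\nabla p(t)\, dx\, dt.
\]
Next I would take the weak formulation \eqref{adjointeq} of the adjoint equation with test function $\varphi = z(t)$; in view of the definition \eqref{adef} of $a[u]$, its left-hand side is precisely $-\langle \partial_t p(t),z(t)\rangle + a[u(t)](z(t),p(t))$, so integration in time yields
\[
-\int_0^T \langle \partial_t p(t), z(t)\rangle\, dt + \int_0^T a[u(t)]\bigl(z(t),p(t)\bigr)\, dt = \alpha_Q \int_0^T\!\!\int_\Omega (\rho-\rho_Q)\, z\, dx\, dt.
\]

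Subtracting these two relations cancels the $a[u](z,p)$-terms and leaves
\[
\int_0^T \bigl(\langle \partial_t z, p\rangle + \langle \partial_t p, z\rangle\bigr)\, dt = -\int_0^T\!\!\int_\Omega \rho\,(b\otimes v)\cdot\nabla p\, dx\, dt - \alpha_Q\int_0^T\!\!\int_\Omega (\rho-\rho_Q)\, z\, dx\, dt.
\]
Since both $z$ and $p$ belong to $W(0,T)$, the standard integration-by-parts formula in $W(0,T)$ (Chipot \cite[Thm.~11.4]{Chipot2012} or Dautray-Lions) gives
\[
\int_0^T \bigl(\langle \partial_t z, p\rangle + \langle \partial_t p, z\rangle\bigr)\, dt = \bigl(z(T), p(T)\bigr) - \bigl(z(0), p(0)\bigr).
\]
Using $z(0)=0$ from \eqref{lineq} and $p(T)=\alpha_\Omega(\rho(T)-\rho_\Omega)$ from \eqref{adjoint}, the right-hand side collapses to $\alpha_\Omega\int_\Omega (\rho(T)-\rho_\Omega) z(T)\, dx$. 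Rearranging produces exactly \eqref{zp}.

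No step is really an obstacle here; the only point requiring a little care is justifying that $z(\cdot)$ and $p(\cdot)$ are legitimate test functions in the respective weak formulations. This is immediate from $z,p\in L^2(0,T;H^1(\Omega))$ with $\partial_t z,\partial_t p\in L^2(0,T;H^1(\Omega)^*)$, together with the $C([0,T];L^2(\Omega))$-embedding of $W(0,T)$, which also gives meaning to the pointwise evaluations $z(0)$, $z(T)$, $p(0)$, $p(T)$ used above.
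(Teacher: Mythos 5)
Your proof is correct and follows essentially the same duality argument as the paper: test the linearized equation with $p$, test the adjoint equation with $z$, and use the integration-by-parts formula in $W(0,T)$ together with $z(0)=0$ and $p(T)=\alpha_\Omega(\rho(T)-\rho_\Omega)$ to equate the remaining terms. The only cosmetic difference is that you subtract the two identities before integrating by parts, whereas the paper integrates by parts in the adjoint relation first and then matches left-hand sides.
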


\if{
or, equivalently,
\begin{multline}
\intT\intO(\rho-\rho_Q)z dx dt + \intO(\rho(T)-\rho_\Omega)z(T)dx \\= \intT \intO p\, {\rm div}(\rhob( b\otimes v)) dt - \intT\int_{\Gamma} p\rhob( b\otimes v) \cdot n ds dt\\
=\intT \intO p\, {\rm div}(\rhob( b\otimes v)) dt + \intT\int_{\Gamma} p(\nu\nabla z+z B[\ub]) \cdot n ds dt,
\end{multline}
where the second equality follows from the boundary condition of $z.$
}\fi

\begin{proof}
It follows by testing the linearized equation \eqref{lineareq2} with $\varphi:=p$ and the adjoint equation \eqref{adjointeq} with $\varphi:=z,$ and subsequent integration by parts (a detailed proof can be found in \cite{AronnaTroeltzsch2020v1}).
\end{proof}

Let us introduce the notation
\be
\label{Phi}
\Phi_i(t):=-\intO \rho(t)  b_i{\frac{\partial {p}(t)}{\partial x_i}}\,dx + \gamma_i u_i(t)+ \beta_i \quad \text{for } i=1,\dots,n.
\ee
\begin{theorem}
For any $u \in \Uinfty$ and $v \in \Utwo$, one has
\be
\label{F'exp}
F'(u)v=\sum_{i=1}^n \intT \Phi_i(t) v_i(t)dt,
\ee
where $\Phi$ is defined in \eqref{Phi}, with $\rho = G(u)$ and $p\in W(0,T)$ being the associated adjoint state.
\end{theorem}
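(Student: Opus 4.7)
The plan is to apply the chain rule to the reduced functional $F = J(G(\cdot),\cdot)$ and then to use the duality identity \eqref{zp} of the preceding lemma to re-express the state-sensitivity in terms of the adjoint state $p$. Since $G$ is of class $C^\infty$ by Corollary \ref{CorGCinfty} and $J$ is smooth in both of its arguments, $F$ is Fr\'echet-differentiable and
\[
F'(u)v \;=\; J_\rho(\rho,u)\, G'(u)v \;+\; J_u(\rho,u)\, v,
\]
where $\rho := G(u)$ and, by Proposition \ref{PropG'}, $G'(u)v = z$ is the weak solution of the linearized state equation \eqref{lineq}.

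Next, I would observe that the state-dependent part
\[
J_\rho(\rho,u)\,z \;=\; \alpha_Q \intT\!\intO (\rho-\rho_Q)\,z\,dx\,dt \;+\; \alpha_\Omega \intO (\rho(T)-\rho_\Omega)\,z(T)\,dx
\]
is precisely the left-hand side of \eqref{zp}. Substituting the right-hand side of that identity, expanding the componentwise product as $(b \otimes v) \cdot \nabla p = \sum_{i=1}^n b_i\, v_i\, \partial_{x_i} p$, and applying Fubini's theorem to pull each scalar factor $v_i(t)$ out of the spatial integral yields
\[
J_\rho(\rho,u)\,z \;=\; -\sum_{i=1}^n \intT v_i(t) \left( \intO \rho(t)\, b_i\, \frac{\partial p(t)}{\partial x_i}\,dx \right) dt.
\]

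The control-dependent part $J_u(\rho,u)\,v$ is obtained by routine differentiation of the Tikhonov and linear terms in $u$, producing for each coordinate a time integral of $v_i$ against an integrand linear in $u_i(t)$ and containing $\beta_i$. Collecting both contributions and matching them with the definition \eqref{Phi} of $\Phi_i$ yields the announced representation \eqref{F'exp}. The only substantive ingredient is the duality identity from the preceding lemma --- which encodes the genuine integration by parts in time and the cancellation between the linearized operator and its adjoint --- so I do not anticipate any real obstacle beyond this essentially notational reassembly.
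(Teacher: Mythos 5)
Your proposal is correct and follows essentially the same route as the paper: chain rule for $F=J(G(\cdot),\cdot)$, identification of $G'(u)v=z$ via Proposition \ref{PropG'}, substitution of the duality identity \eqref{zp} to replace the state-sensitivity terms by $-\intT\intO \rho\,(b\otimes v)\cdot\nabla p\,dx\,dt$, and componentwise reassembly into $\Phi_i$. No gaps.
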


\begin{proof}
Let $z$ be the solution of the state equation linearized at $(\rho,u).$
Then one has
\be\label{F'}
\begin{split}
F'(u)v 
& = {\alpha_Q}  \intT\intO(\rho-\rho_Q)z dx dt + {\alpha_\Omega}  \intO(\rho(T)-\rho_\Omega)z(T)dx \\
&\qquad+\sum_{i=1}^n \intT \left(\gamma_iu_iv_i+ \beta_i v_i\right)dt\\
&= -\intT \intO\rho( b\otimes v)\cdot \nabla p dx dt+\sum_{i=1}^n \intT \left(\gamma_i u_iv_i+ \beta_i v_i\right)dt\\
&=\sum_{i=1}^n \intT \Big[-\intO \rho  b_i\frac{\partial p}{\partial x_i} dx + \gamma_i u_i+ \beta_i \Big]v_i\,dt,
\end{split}
\ee
where we used \eqref{zp} in the second equality. This proves the result.
\end{proof}

By means of the expression \eqref{F'exp}, we can reformulate the first order optimality condition of Proposition \ref{PropFirst} as follows:
\begin{corollary}
If $\ub$ is an $L^\infty$-local minimum for \eqref{P}, then
\be
\label{First}
\sum_{i=1}^n \intT \bar\Phi_i(t)\big(u_i(t)-\ub_i(t)\big)\,dt \geq 0\quad \text{for all } u\in \mathcal{U}_{\rm ad},
\ee
where {$\bar\Phi$ is the function given in \eqref{Phi} associated to $\ub$.
Consequently, we have
\be
\label{signPhi}
\left\{
\begin{split}
& \ds \bar{\Phi}_i(t) > 0   \Longrightarrow  \ub_i(t) = u_i^{\min}(t),\\
& \ds \bar{\Phi}_i(t) < 0   \Longrightarrow  \ub_i(t) = u_i^{\max}(t),\\
& \ds u_i^{\min}(t) < \ub_i (t) < u_i^{\max}(t)  \Longrightarrow \bar{\Phi}_i(t) =0,
\end{split}
\right.
\ee
a.e. on $[0,T]$ and for all $i=1,\dots,n.$}
\end{corollary}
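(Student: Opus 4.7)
The first inequality \eqref{First} is an immediate translation. Since $\bar u$ is an $L^\infty$-local minimizer, Proposition \ref{PropFirst} yields $F'(\bar u)(u-\bar u)\geq 0$ for every $u\in\Uad$. Applying the representation \eqref{F'exp} to the direction $v:=u-\bar u$, with $\bar\rho=G(\bar u)$ and $\bar p$ the associated adjoint state, gives exactly
$\sum_{i=1}^n\int_0^T \bar\Phi_i(t)(u_i(t)-\bar u_i(t))\,dt \geq 0$, which is \eqref{First}. So this part is one line.

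For the pointwise sign conditions \eqref{signPhi}, my plan is the standard componentwise localization argument. I would first decouple the coordinates: fix $i\in\{1,\dots,n\}$ and, for any $v_i\in L^\infty(0,T)$ with $u_i^{\min}\leq v_i\leq u_i^{\max}$ a.e., define $u\in\Uinfty$ by $u_j:=\bar u_j$ for $j\neq i$ and $u_i:=v_i$. Since $\Uad$ is defined by componentwise box constraints \eqref{uad}, we have $u\in\Uad$, and the terms with $j\neq i$ drop out of \eqref{First}, leaving
\[
\int_0^T \bar\Phi_i(t)\bigl(v_i(t)-\bar u_i(t)\bigr)\,dt \geq 0 \quad\text{for every admissible } v_i.
\]

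From here I would contradict each of the three implications in \eqref{signPhi} by concentrating the test variation on the hypothetical violation set. For example, if the set $E:=\{t\in[0,T]:\bar\Phi_i(t)>0 \text{ and } \bar u_i(t)>u_i^{\min}(t)\}$ had positive measure, then choosing $v_i:=u_i^{\min}$ on $E$ and $v_i:=\bar u_i$ on $[0,T]\setminus E$ would give an admissible $v_i$ with $\int_0^T\bar\Phi_i(v_i-\bar u_i)\,dt<0$, contradicting the decoupled inequality; the remaining two implications are proved analogously (or, equivalently, by the standard projection formula $\bar u_i(t)=\mathrm{Proj}_{[u_i^{\min}(t),u_i^{\max}(t)]}(\bar u_i(t)-\tau\bar\Phi_i(t))$ for any $\tau>0$).

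I do not expect any genuine obstacle here: the argument is a routine pointwise reduction once the componentwise box structure of $\Uad$ is exploited. The only minor point to verify is that the concentrating test functions are measurable and bounded, which is immediate since they are truncations of the measurable, bounded functions $u_i^{\min},u_i^{\max},\bar u_i$ on measurable sets defined through $\bar\Phi_i\in L^2(0,T)$ (integrability of $\bar\Phi_i$ follows from $\bar\rho,\bar p\in W(0,T)\hookrightarrow C([0,T];L^2(\Omega))$ together with $b\in L^\infty(\Omega;\cR^n)$).
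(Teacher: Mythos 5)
Your argument is correct and is exactly the standard one: the paper states this corollary without proof, as an immediate consequence of Proposition \ref{PropFirst} and the representation \eqref{F'exp}, and your componentwise localization argument (including the observation that $\bar\Phi_i\in L^2(0,T)$, which the paper itself verifies later when checking condition \ref{C1}) correctly fills in the routine details for \eqref{signPhi}.
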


\section{Second order analysis}\label{S6}
\setcounter{equation}{0}

The optimal control problem is a non-convex one, hence first order necessary optimality conditions should be complemented by a second order analysis. Second order sufficient optimality conditions serve as important assumption for the numerical analysis. For instance, the stability of locally optimal  solutions under a numerical approximation of the problem or the convergence analysis of numerical methods such as SQP or semismooth Newton techniques need second order sufficient optimality conditions as hypothesis. Though it is hardly possible to confirm them numerically, they are used as assumption for the analysis. This is similar to constraint qualifications in nonlinear optimization that can be verified only in exceptional cases but are indispensable for the analysis.

To establish second order optimality conditions, we will apply general results by Casas and Tr\"oltzsch,   \cite[Theorems 2.2 and 3.3]{CasasTroeltzsch2012}. 
For this purpose, we have to verify that the conditions {\rm \ref{C1}}-{\rm \ref{C3}} below  are satisfied for problem \eqref{P}. More precisely, we will need {\rm \ref{C1}} in the second order necessary condition of Theorem \ref{SONC} below, while {\rm \ref{C1}}-{\rm \ref{C3}} are used in the sufficient one of Theorem \ref{SOSC}.

\subsection{Second order conditions for an optimization problem in Banach spaces}

We consider a Banach space $U_\infty$ and a Hilbert space $U_2,$ endowed with the norms $|\cdot|_\infty$ and $|\cdot|_2$, respectively, and such that $U_\infty$ is continuously embedded in $U_2.$ Let us introduce the abstract optimization problem
\be
\label{calP}\tag{$\mathcal{P}$}
\min_{u\in \mathcal{K}} \mathcal{J}(u),
\ee
where $\mathcal{K}\subseteq U_\infty$ is a given nonempty convex set and $\mathcal{J} \colon \mathcal{A}\to \cR$ is the objective function, defined  and twice continuously differentiable in an open subset $\mathcal{A} \subset U_\infty$ that covers $\mathcal{K}.$
We say that $\ub$ is a {\em $U_\infty$-local solution} of \eqref{calP} if there exists $\eps>0$ such that $\mathcal{J}(\ub) \leq \mathcal{J}(u)$ holds for all $u\in \mathcal{K} \cap \{u\in U_\infty:  |u-\ub|_\infty<\eps\}.$
\\
\indent If $\ub$ is a $U_\infty$-local solution of \eqref{calP}, then the following first order necessary condition is satisfied:
\be
\label{FONCJ}
\mathcal{J}'(\ub)(u-\ub)\geq 0 \quad \text{for all } u\in \mathcal{K}.
\ee

 Let us fix $\ub$ in $\mathcal{K}$. We consider the following conditions for problem \eqref{calP}. All the notions of differentiability of $\mathcal{J}$ are to be understood in the sense of $U_\infty.$
\begin{itemize}
\item[\namedlabel{C1}{(C1)}] The functional $\mathcal{J}$ is of class $C^2$ in 
$\mathcal{A}$. For every $u \in \mathcal{K},$ there exist continuous extensions
\[
\mathcal{J}'(u) \in \mathcal{L}(U_2;\cR),\quad \mathcal{J}''(u)  \in \mathcal{B}(U_2;\cR)
\]
of $\mathcal{J}'(u)$ and $\mathcal{J}''(u)$, where $\mathcal{B}(U_2;\cR)$ denotes the Banach space of continuous bilinear real functionals on  $U_2\times U_2$.
\item[\namedlabel{C2}{(C2)}] For any sequence $\{ (u_k,v_k)\} \subset \mathcal{K} \times U_2 $ with $u_k\to \ub$ in $U_2$ and $v_k \rightharpoonup v$ weakly in $U_2$ as $k \to \infty$, there holds
\begin{gather}
\label{limF'}
\mathcal{J}'(\ub) v = \lim_{k\to\infty} \mathcal{J}'(u_k)v_k.
\end{gather}
\item[\namedlabel{C3}{(C3)}] For any sequence defined as in {\rm (C2),} the following two properties are satisfied for some $\Lambda>0:$ it holds
\begin{gather}
\label{liminfD2J} \mathcal{J}''(\ub) v^2 \leq \liminf_{k\to \infty} \mathcal{J}''(u_k) v_k^2,\\
\label{liminfD2J0} \text{and, if } v=0, \text{ then } \,\,\, \Lambda \liminf_{k\to\infty} |v_k|_2^2 \leq \liminf_{k\to \infty} \mathcal{J}''(u_k)v_k^2.
\end{gather}
\end{itemize}

For a fixed control $\ub \in \mathcal{K}$, let us define the following sets
\begin{equation}
\label{cones}
 \begin{split}
S(\ub)&:= \big\{v\in U_\infty: v=\lambda(u-\ub)\, \text{for some } \lambda>0 \text{ and } u\in \mathcal{K} \big\},\\
C(\ub)&:= {\rm cl}_{U_2}(S(\ub)) \cap\{ v\in U_2: \mathcal{J}'(\ub)v=0\},\\
D(\ub) &:= \big\{v\in S(\ub) : \mathcal{J}'(\ub)v=0 \big\}.
\end{split}
\end{equation}
The set $S(\ub)$ is called {\em cone of feasible directions}, while $C(\ub)$ is the {\em critical cone.}
\\
\vspace{1.5ex}
\\
\indent 
We first state the general Theorems 2.2 and 2.3 from \cite{CasasTroeltzsch2012}, that we will apply to obtain the second order necessary and sufficient optimality conditions for the control of our Fokker-Planck equation in Theorems \ref{SONC} and \ref{SOSC} below.

\begin{theorem}[Casas-Tr\"oltzsch \cite{CasasTroeltzsch2012}]
\label{SONC2012}
Let $\ub$ be a $U_\infty$-local solution for \eqref{calP}. Assume that {\rm \ref{C1}} and the {\em regularity condition} $C(\ub) = {\rm cl}_{U_2}D(\ub)$ hold. Then
\[
\mathcal{J}''(\ub)v^2 \geq 0\quad \text{for all } v\in C(\ub).
\]
\end{theorem}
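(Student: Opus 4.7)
The plan is to first prove the inequality on the smaller cone $D(\bar u)$ by a classical Taylor-expansion argument along admissible rays, and then to extend it to $C(\bar u)$ by density, using the continuous $U_2$-extension of $\mathcal{J}''(\bar u)$ provided by \ref{C1} and the hypothesis $C(\bar u) = \mathrm{cl}_{U_2} D(\bar u)$.

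For the first step, let $v \in D(\bar u)$. By definition of $S(\bar u)$, there exist $\lambda > 0$ and $u \in \mathcal{K}$ with $v = \lambda(u - \bar u)$, and moreover $\mathcal{J}'(\bar u) v = 0$, which gives $\mathcal{J}'(\bar u)(u - \bar u) = 0$. Because $\mathcal{K}$ is convex, $u_t := \bar u + t(u - \bar u) \in \mathcal{K}$ for every $t \in [0,1]$, and since $u - \bar u \in U_\infty$, for all sufficiently small $t > 0$ we have $\|u_t - \bar u\|_\infty < \varepsilon$, where $\varepsilon$ is the local optimality radius. Applying the $U_\infty$-local optimality of $\bar u$ together with a second order Taylor expansion of $\mathcal{J}$ (which is valid by \ref{C1}, since $\mathcal{J}$ is $C^2$ on $\mathcal{A}$), we get
\[
0 \;\le\; \mathcal{J}(u_t) - \mathcal{J}(\bar u) \;=\; t\,\mathcal{J}'(\bar u)(u-\bar u) + \frac{t^2}{2}\,\mathcal{J}''(\bar u)(u-\bar u)^2 + o(t^2).
\]
Since the first order term vanishes, dividing by $t^2/2$ and letting $t \to 0^+$ yields $\mathcal{J}''(\bar u)(u-\bar u)^2 \ge 0$, and multiplying by $\lambda^2$ gives $\mathcal{J}''(\bar u) v^2 \ge 0$ for every $v \in D(\bar u)$.

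For the second step, fix $v \in C(\bar u)$. By the regularity hypothesis $C(\bar u) = \mathrm{cl}_{U_2} D(\bar u)$, there is a sequence $\{v_k\} \subset D(\bar u)$ with $v_k \to v$ in $U_2$. By the first step, $\mathcal{J}''(\bar u) v_k^2 \ge 0$ for all $k$. Assumption \ref{C1} guarantees that $\mathcal{J}''(\bar u)$ extends to a continuous bilinear form on $U_2 \times U_2$, so the quadratic form $w \mapsto \mathcal{J}''(\bar u) w^2$ is continuous on $U_2$ and we may pass to the limit to obtain $\mathcal{J}''(\bar u) v^2 = \lim_k \mathcal{J}''(\bar u) v_k^2 \ge 0$, which is the desired conclusion.

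The argument is essentially routine; the only conceptual subtlety is the interplay between the two norms. Local optimality is only available in the $U_\infty$ neighbourhood, which is precisely why the elementary Taylor argument is carried out on $D(\bar u) \subset S(\bar u) \subset U_\infty$ rather than directly on $C(\bar u)$. The passage to the closure is then harmless because \ref{C1} supplies a $U_2$-continuous extension of the second derivative, matching the topology in which $C(\bar u)$ is described. Conditions \ref{C2} and \ref{C3} are not needed here; they will be used only for the sufficient condition in Theorem \ref{SOSC}.
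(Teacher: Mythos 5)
Your proof is correct: the Taylor-expansion argument along feasible rays gives $\mathcal{J}''(\ub)v^2\ge 0$ on $D(\ub)$, and the passage to $C(\ub)$ via the regularity condition and the $U_2$-continuous extension of $\mathcal{J}''(\ub)$ from \ref{C1} is exactly the standard argument; this is the proof of Theorem~2.2 in \cite{CasasTroeltzsch2012}. Note that the paper itself does not reprove this statement but only quotes it from that reference, so there is nothing in the text to diverge from; your write-up matches the original.
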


\begin{theorem}[Casas-Tr\"oltzsch \cite{CasasTroeltzsch2012}]
\label{SOSCCT}
Suppose that {\rm \ref{C1}-\ref{C3}} are fulfilled for problem \eqref{calP}. Let $\ub \in \mathcal{K}$ satisfy  the 
first order necessary condition \eqref{FONCJ} along with
\be
\mathcal{J}''(\ub)v^2 >0\quad \text{for all } v \in C(\ub) \backslash \{0\}.
\ee
Then, there exist $\eps>0$ and $\delta >0$ such that
\be
\mathcal{J}(\ub) + \frac{\delta}{2} |u-\ub|_2^2 \leq \mathcal{J}(u)\quad  \text{for all } u \in \mathcal{K} \cap B_{2,\eps}(\ub),
\ee
where $B_{2,\eps}(\ub)$ is the open ball in $U_2,$ of radius $\eps$ and centered in the origin.
\end{theorem}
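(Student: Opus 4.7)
The plan is a proof by contradiction that extracts a critical direction and then uses hypothesis \ref{C3} to clash with the strict positivity assumption. Suppose the conclusion fails: then for every positive integer $k$ there exists $u_k \in \mathcal{K}$ with $0< \rho_k := |u_k - \ub|_2 < 1/k$ and $\mathcal{J}(u_k) < \mathcal{J}(\ub) + \rho_k^2/(2k)$. Set $v_k := (u_k-\ub)/\rho_k$, so that $|v_k|_2 = 1$. Extract a subsequence (not relabeled) with $v_k \rightharpoonup v$ weakly in the Hilbert space $U_2$.

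The first step is to identify $v$ as an element of the critical cone $C(\ub)$. Each $v_k$ lies in $S(\ub)$ (take $\lambda = 1/\rho_k$), and $S(\ub)$ is convex, so Mazur's theorem places the weak limit $v$ in $\mathrm{cl}_{U_2}(S(\ub))$. For the identity $\mathcal{J}'(\ub)v = 0$, expand $\mathcal{J}$ along the segment joining $\ub$ and $u_k$ by Taylor's formula (legal in $U_\infty$ since $\mathcal{J}\in C^2(\mathcal{A})$ and $u_k,\ub \in \mathcal{K}\subset\mathcal{A}$): for some $\hat u_k$ on that segment,
\[
\mathcal{J}(u_k) - \mathcal{J}(\ub) = \rho_k\,\mathcal{J}'(\ub)v_k + \tfrac{\rho_k^2}{2}\,\mathcal{J}''(\hat u_k)v_k^2.
\]
Dividing by $\rho_k^2$, inserting the contradiction hypothesis, and using $\mathcal{J}'(\ub)v_k \ge 0$ from \eqref{FONCJ}, one obtains $\tfrac{1}{2}\mathcal{J}''(\hat u_k)v_k^2 \le 1/(2k)$, hence $\limsup_k \mathcal{J}''(\hat u_k) v_k^2 \le 0$. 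Since $\hat u_k \to \ub$ in $U_2$, the first part of \ref{C3} gives $\liminf_k \mathcal{J}''(\hat u_k)v_k^2 \ge \mathcal{J}''(\ub)v^2 > -\infty$, so the sequence $\mathcal{J}''(\hat u_k)v_k^2$ is bounded. Consequently $\rho_k \mathcal{J}''(\hat u_k) v_k^2 \to 0$, and the Taylor identity divided by $\rho_k$ forces $0 \le \mathcal{J}'(\ub)v_k \to 0$. Hypothesis \ref{C2} then transfers this to $\mathcal{J}'(\ub)v = \lim_k \mathcal{J}'(\ub)v_k = 0$, so $v \in C(\ub)$.

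To close the contradiction, return to $\limsup_k \mathcal{J}''(\hat u_k)v_k^2 \le 0$ and apply \ref{C3} once more. If $v \ne 0$, then $v \in C(\ub)\setminus\{0\}$ and the strict positivity assumption together with \ref{C3} gives $0 < \mathcal{J}''(\ub)v^2 \le \liminf_k\mathcal{J}''(\hat u_k)v_k^2 \le 0$, impossible. If $v = 0$, the coercivity part of \ref{C3} combined with $|v_k|_2 = 1$ gives $0 < \Lambda = \Lambda\liminf_k |v_k|_2^2 \le \liminf_k \mathcal{J}''(\hat u_k) v_k^2 \le 0$, again impossible. Hence constants $\eps,\delta>0$ with the asserted quadratic growth must exist.

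The main obstacle I anticipate is the mixed-topology bookkeeping: Taylor's expansion is available only in the strong space $U_\infty$, while weak compactness, and hence the extraction of the limiting direction $v$, is available only in the larger Hilbert space $U_2$. Hypothesis \ref{C1} bridges this gap by providing continuous $U_2$-extensions of $\mathcal{J}'$ and $\mathcal{J}''$, and \ref{C2}--\ref{C3} encode precisely the continuity and weak lower-semicontinuity needed to commute derivatives with the passage to the limit. The most delicate micro-step is the boundedness of $\mathcal{J}''(\hat u_k)v_k^2$, which is not postulated and must be deduced a posteriori by pinching the sequence between the upper bound supplied by the contradiction hypothesis and the lower bound supplied by the lower-semicontinuity in \ref{C3}.
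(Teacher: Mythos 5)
The paper does not prove this theorem itself — it is imported verbatim from \cite{CasasTroeltzsch2012} (Theorem 2.3 there) — so there is no in-paper argument to compare against; your proof is a correct, self-contained reconstruction of the standard argument from that reference: negate the quadratic growth, normalize $v_k=(u_k-\ub)/|u_k-\ub|_2$, extract a weak limit $v$, place it in $C(\ub)$ via Mazur and the Taylor expansion with \eqref{FONCJ} and \ref{C2}, and then contradict either the strict positivity (if $v\neq 0$) or the coercivity clause of \ref{C3} (if $v=0$). All the delicate points — that $\hat u_k$ stays in $\mathcal{K}$ and converges to $\ub$ in $U_2$ so that \ref{C3} applies along the intermediate points, and the a posteriori boundedness of $\mathcal{J}''(\hat u_k)v_k^2$ obtained by pinching between the contradiction hypothesis and \eqref{liminfD2J} — are handled correctly.
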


\begin{remark}
The two theorems above were formulated for problems where the so-called {\em two-norm discrepancy} occurs.
This means first that the objective functional $\mathcal{J}$ is not of class $C^2$ in $U_2$, while it is $C^2$ in $U_\infty$. Second, it includes that the coercivity $\mathcal{J}''(\bar u)v^2 \ge \delta \, |v|_\infty^2$ cannot be shown for the $U_\infty$-norm for any $\delta>0$, while it can possibly be fulfilled with the norm $|\cdot|_2$
of $U_2$. We refer e.g. to Ioffe, \cite{Ioffe1979}.
In our optimal control problem, the two-norm discrepancy does not occur, since the reduced objective functional $F$ belongs to the class  $C^2$ in $U_2$. We might work with $U_\infty = U_2 = \Utwo$.
\end{remark}

\vspace{1.5ex}
In the remainder of this paper, we will confirm the three conditions {\rm \ref{C1}-\ref{C3}} for our optimal control problem \eqref{P}.
For this purpose, we consider 
\[
U_\infty=\mathcal{A}:= \Uinfty,\quad U_2:= \Utwo, \quad \mathcal{J}:= F,\quad \mathcal{K}:=\Uad.
\]
The conditions {\rm \ref{C1}-\ref{C2}} are obviously satisfied if one had $U_\infty=U_2 = 
\Utwo$. However, for the confirmation of \ref{C3}, we need higher regularity of the state and the adjoint state. To this end, we have to work with bounded controls so that it appeared to be more natural for us to consider a two-norm setting and to perform our analysis in $U_\infty = \Uinfty$ and $U_2 = \Utwo$. Though we might work with  $U_\infty = U_2 = \Utwo$ and simplify the presentation, we decided to keep this two-norm setting to show those readers, who really have to deal with the two-norm discrepancy, the technique of proof.

\subsection{Second derivative of the reduced cost functional}

Let us compute the second derivative $G''(u)[v_1,v_2]$ of the control-to-state operator, for $u\in \Uinfty$ and $v_1,v_2 \in \Utwo$. Notice that the differentiability of $G$ has already been proven in Corollary \ref{CorGCinfty}. We have
\[
\partial_t G(u) - \nu \Delta G(u) - {\rm div} (B[u] G(u)) =0
\]
subject to associated initial and boundary conditions.
Then, differentiating with respect to $v_1 \in \Utwo$, we obtain
\[
\partial_t (G'(u) v_1) -\nu \Delta (G'(u)v_1)-{\rm div}( b\otimes v_1\, G(u)) - {\rm div} (B[u] G'(u)v_1)=0.
\]
Another differentiation with respect to $v_2 \in  \Utwo$ yields
\begin{multline*}
\partial_t (G''(u)[v_1,v_2])- \nu\Delta(G''(u)[v_1,v_2])-{\rm div}( b \otimes v_1 G'(u)v_2)
\\-{\rm div}( b\otimes v_2 G'(u)v_1) - {\rm div}(B[u] G''(u)[v_1,v_2])=0.
\end{multline*}
Setting $z_i := G'(u)v_i,$ for $i=1,2,$ and $w:=G''(u)[v_1,v_2]$ for the unknown second-order derivative, we get the following equation for $w$, written in strong form:
\begin{equation*}
\begin{split}
w_t- \nu\Delta w  - {\rm div}(B[u] w) &= {\rm div} \, (z_1\, b \otimes v_2 + z_2\, b \otimes v_1),\\
(\nu\nabla w + wB[u])\cdot n &= -(z_1\, b \otimes v_2 + z_2\, b \otimes v_1) \cdot n ,\\
w(0)&=0.
\end{split}
\end{equation*}
The boundary condition is also obtained by implicit differentiation.
Notice that $v_1$ and $v_2$ depend only on $t$. Moreover,  we have $b \in L^{\infty}(\Omega;\cR^n)$ and $z_i \in W(0,T)$, for 
$i = 1,2$. In particular, $z_i$ belongs to $C([0,T];L^2(\Omega))$, for 
$i = 1,2$.
Therefore, $z_1\, b \otimes v_2 + z_2\, b \otimes v_1$ are in $L^2(Q;\cR^n)$. The homogeneous initial condition follows obviously by differentiating the equation
$G(u)(\cdot,0) = \rho_0(\cdot)$ twice with respect to $u$.
The associated weak formulation is
\begin{align}
\label{varforw} 
\frac{d}{dt} ( w, \varphi) + a[u](w,\varphi) &=\big(d[z_1 b\otimes v_2] + d[z_2 b\otimes v_1]\big)(\varphi)\quad \text{on } \mathcal{D}'(0,T)\,\, \text{for all } \varphi \in \spaceV,\\
\label{initcondw} w(0) &= 0\quad \text{in } \spaceH.
\end{align}

\begin{proposition}
Let $u \in \Uinfty$ be given and $v_1,v_2 \in \Utwo$ be control increments with associated linearized states $z_1,z_2$, respectively.
Then the second order derivative of the reduced cost functional $F$ at $u$ in the direction pair $(v_1,v_2)$  is given by
\begin{multline}
\label{F''}
F''(u)[v_1,v_2] = \intT\intO \Big({\alpha_Q} z_1z_2   - \nabla p \cdot ( z_2 b \otimes v_1 +z_1 b\otimes v_2 )\Big)dxdt 
\\+ \intT\sum_{i=1}^n \gamma_i v_{1,i} v_{2,i} dt + {\alpha_{\Omega}} \intO z_1(T)z_2(T) dx,
\end{multline}
where $p \in W(0,T)$ is the adjoint state associated with $u$.
\end{proposition}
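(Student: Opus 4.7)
The plan is to differentiate $F(u) = J(G(u),u)$ twice by the chain rule. Since $J$ contains no mixed $\rho$-$u$ cross terms, the off-diagonal partial second derivatives vanish, so that
\[
F''(u)[v_1,v_2] = J_{\rho\rho}(\rho,u)[z_1,z_2] + J_{uu}(\rho,u)[v_1,v_2] + J_\rho(\rho,u)\,w,
\]
where $\rho = G(u)$, the linearized states $z_i := G'(u)v_i$ solve \eqref{lineq} (Proposition \ref{PropG'}), and $w := G''(u)[v_1,v_2]$ is the second-order sensitivity satisfying \eqref{varforw}--\eqref{initcondw}. Since the right-hand side $d[z_1 b\otimes v_2] + d[z_2 b\otimes v_1]$ belongs to $L^2(0,T;H^1(\Omega)^*)$ (this uses $z_i \in W(0,T) \hookrightarrow C([0,T];L^2(\Omega))$, $b \in L^\infty(\Omega;\mathbb{R}^n)$, and $v_j \in L^\infty(0,T;\mathbb{R}^n)$), Theorem \ref{ThmExistSol} guarantees that $w$ is a well-defined element of $W(0,T)$.

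Direct differentiation of the integrand of $J$ yields
\[
J_{\rho\rho}(\rho,u)[z_1,z_2] = \alpha_Q \int_0^T\!\!\int_\Omega z_1 z_2\,dxdt + \alpha_\Omega \int_\Omega z_1(T)z_2(T)\,dx,
\]
and $J_{uu}(\rho,u)[v_1,v_2] = \sum_{i=1}^n \gamma_i \int_0^T v_{1,i} v_{2,i}\,dt$, consistent with the convention fixed by the first-order expression \eqref{F'}.

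The main step is to rewrite the term $J_\rho(\rho,u)\,w$ using the adjoint state $p$ associated with $(\rho,u)$. This is done exactly as in the derivation of \eqref{zp}: test the weak form of the equation \eqref{varforw} for $w$ with $\varphi = p$, test the adjoint equation \eqref{adjointeq} with $\varphi = w$, integrate by parts in time, and exploit $w(0)=0$ together with the terminal condition $p(T)=\alpha_\Omega(\rho(T)-\rho_\Omega)$. Subtracting the two identities makes the symmetric bilinear form $\int_0^T\!\int_\Omega(\nu\nabla w\cdot\nabla p + wB[u]\cdot\nabla p)\,dxdt$ cancel, and, because the forcing in the equation for $w$ is $d[z_1 b\otimes v_2]+d[z_2 b\otimes v_1]$ (in place of $d[\rho b\otimes v]$ in the first-order case), one obtains
\[
\alpha_Q\!\int_0^T\!\!\int_\Omega(\rho-\rho_Q)w\,dxdt + \alpha_\Omega\!\int_\Omega(\rho(T)-\rho_\Omega)w(T)\,dx = -\!\int_0^T\!\!\int_\Omega \nabla p\cdot(z_1 b\otimes v_2 + z_2 b\otimes v_1)\,dxdt.
\]
Substituting this into the chain-rule decomposition gives precisely \eqref{F''}.

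The main obstacle is essentially bookkeeping: ensuring that each product appearing in the adjoint duality pairings lies in the right space so that the integration by parts identities are rigorously justified. All of the required regularity is already available from Proposition \ref{Propestimatez}, Proposition \ref{P5.1}, and the embedding $W(0,T) \hookrightarrow C([0,T];L^2(\Omega))$; no argument beyond the one used for \eqref{F'exp} is needed.
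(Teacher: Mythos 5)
Your proposal is correct and follows essentially the same route as the paper: differentiate the first-order expression by the chain rule to obtain the decomposition involving $w=G''(u)[v_1,v_2]$, then eliminate the $w$-terms by the duality argument (testing \eqref{varforw} with $p$ and \eqref{adjointeq} with $w$), exactly as in the derivation of \eqref{zp}. The only difference is presentational: you record the vanishing of the mixed $J_{\rho u}$ terms and the well-posedness of $w$ explicitly, which the paper leaves implicit.
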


\begin{proof}
The second order derivative of $F$ is computed by the chain rule. One has
\begin{multline*}
F'(u)v_1={\alpha_{Q}}\big(G(u)-\rho_Q,G'(u)v_1\big)_{L^2(Q)} \\ +{\alpha_{\Omega}}\big((G(u))(T)-\rho_\Omega,(G'(u)v_1)(T)\big)_{L^2(\Omega)}
+\sum_{i=1}^n \Big( \gamma_i(u_i,v_{1,i})_{L^2(0,T)} + \beta_i\intT v_{1,i} dt \Big).
\end{multline*}
Then, differentiating this expression w.r.t. $v_2$ yields
\begin{equation}
\label{F''exp}
\begin{split}
F''(u)[v_1,v_2] = & \,{\alpha_Q} \big( G'(u)v_2,G'(u)v_1\big)_{L^2(Q)} + {\alpha_Q} \big( G(u)-\rho_Q,G''(u)[v_1,v_2]\big)_{L^2(Q)} \\
& + {\alpha_\Omega} \big((G'(u)v_2)(T),(G'(u)v_1)(T)\big)_{L^2(\Omega)} \\
&+ {\alpha_\Omega} \big((G(u))(T)-\rho_\Omega,(G''(u)[v_1,v_2])(T)\big)_{L^2(\Omega)}+\sum_{i=1}^n \gamma_i(v_{2,i},v_{1,i})_{L^2(0,T)}\\
=&\, {\alpha_Q}(z_1,z_2)_{L^2(Q)} + {\alpha_Q}(G(u)-\rho_Q,w)_{L^2(Q)} + {\alpha_\Omega}(z_1(T),z_2(T))_{L^2(\Omega)}\\
&+{\alpha_\Omega}(p(T),w(T))_{L^2(\Omega)}+\sum_{i=1}^n \gamma_i(v_{1,i},v_{2,i})_{L^2(0,T)},
\end{split}
\end{equation}
where we used that $z_i = G'(u)v_i,$ for $i=1,2$, and $w=G''(u)[v_1,v_2]$. Moreover, we invoked the terminal condition for $p$ from the adjoint equation \eqref{adjoint}.
This form of $F''$ includes the solution $w$ that implicitly depends on the increments $v_1,v_2$ via
the partial differential equation \eqref{varforw}. Now we proceed in the same way that we used to show
equation \eqref{zp}. We insert $p$ as test function in equation \eqref{varforw} and  $w$ as test function in the adjoint equation \eqref{adjointeq}. After some integration by parts, we arrive at the relation
\begin{equation*}
{\alpha_Q} \big(G(u)-\rho_Q,w\big)_{L^2(Q)} + {\alpha_\Omega}\big(p(T),w(T)\big)_{L^2(\Omega)}=- \Big(\nabla p \, , \, b \otimes v_1 z_2
+ b\otimes v_2 z_1\Big)_{L^2(Q)}
\end{equation*}
that yields \eqref{zp}. Inserting this result in \eqref{F''exp}, we verify the claim.
\end{proof}

We will state our second order optimality conditions in terms of the {\em quadratic form}
\be
\label{D2F}
F''(u)v^2 = \iint_Q \Big[{\alpha_Q}z^2-2\nabla p\cdot (z b \otimes v)\Big]dxdt + \sum_{i=1}^n \gamma_i\intT v_i^2 dt + {\alpha_{\Omega}} \intO z(T)^2 dx,
\ee
that is obtained from the general form \eqref{F''} of $F''$ by the choice $v_1=v_2=v \in \Utwo$. For all $u \in U_\infty=\Uinfty$, the quadratic form is defined and continuous in $U_2=\Utwo$.

\subsection{Critical directions}

For the sets defined in \eqref{cones}, the following characterization is easily obtained.
\begin{proposition}[Characterization of the critical cone]
\be
\label{CharaCu}
C(\ub)= \left\{
\begin{split}
&v\in \Utwo: \,\text{a.e. on } [0,T]  \text{ and for } i=1,\dots,n,\\
&v_i
\left\{
\begin{split} 
\geq 0\quad &\text{if } \ub_i=u^{\min}_i\\
  \leq 0\quad &\text{if } \ub_i=u^{\max}_i\\
 =0\quad &\text{if } \bar\Phi_i\neq 0
\end{split}
\right.
\end{split}
\right\},
\ee
where $\bar\Phi$ was defined in \eqref{Phi}.
\end{proposition}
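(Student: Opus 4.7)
The argument naturally splits in two parts: first, identify $\mathrm{cl}_{L^2}(S(\bar u))$ with the standard tangent cone to the box $\mathcal{U}_{\rm ad}$; second, use the sign information of $\bar\Phi$ coming from \eqref{signPhi} to convert the condition $F'(\bar u)v=0$ into the pointwise vanishing of $v_i$ on $\{\bar\Phi_i\ne 0\}$.

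Denote by $T(\bar u)$ the candidate set appearing on the right-hand side of \eqref{CharaCu} before intersecting with $\{F'(\bar u)v=0\}$, namely the set of $v \in L^2(0,T;\cR^n)$ satisfying, for each $i=1,\dots,n$,
\[
v_i(t)\geq 0 \text{ a.e.\ on } \{\bar u_i=u_i^{\min}\},\qquad
v_i(t)\leq 0 \text{ a.e.\ on } \{\bar u_i=u_i^{\max}\}.
\]
The inclusion $S(\bar u)\subseteq T(\bar u)$ is immediate, since any $v=\lambda(u-\bar u)$ with $\lambda>0$ and $u\in\Uad$ inherits these sign constraints on the active sets; the set $T(\bar u)$ is closed in $L^2$, so $\mathrm{cl}_{L^2}(S(\bar u))\subseteq T(\bar u)$. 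For the reverse inclusion, fix $v\in T(\bar u)$ and define the truncation $v^k := \max(\min(v, k\,\mathbf 1), -k\,\mathbf 1)\in\Uinfty$ componentwise. Then $v^k\to v$ in $L^2$. For $\lambda_k>0$ sufficiently small (depending on the $L^\infty$-distance from $\bar u$ to the strictly inactive bounds and on $k$), one checks $\bar u+\lambda_k v^k\in\Uad$, so $v^k\in\lambda_k^{-1}(\Uad-\bar u)\subseteq S(\bar u)$. Hence $T(\bar u)\subseteq \mathrm{cl}_{L^2}(S(\bar u))$.

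Now combine with the condition $F'(\bar u)v=0$. By \eqref{F'exp}, $F'(\bar u)v=\sum_{i=1}^n\int_0^T \bar\Phi_i(t)v_i(t)\,dt$. The optimality relations \eqref{signPhi} show that on $\{\bar\Phi_i>0\}$ we have $\bar u_i=u_i^{\min}$, hence $v_i\geq 0$ for every $v\in T(\bar u)$; and on $\{\bar\Phi_i<0\}$ we have $\bar u_i=u_i^{\max}$, hence $v_i\leq 0$. Therefore $\bar\Phi_i v_i\geq 0$ a.e.\ on $[0,T]$, and the condition $F'(\bar u)v=0$ forces $\bar\Phi_i v_i=0$ a.e. In particular $v_i=0$ a.e.\ on $\{\bar\Phi_i\ne 0\}$, which together with the sign constraints describing $T(\bar u)$ gives exactly the set on the right-hand side of \eqref{CharaCu}. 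The converse inclusion is obvious, since any $v$ in that set lies in $T(\bar u)$ and satisfies $\bar\Phi_i v_i=0$ a.e., whence $F'(\bar u)v=0$.

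The only delicate step is the density statement $T(\bar u)\subseteq\mathrm{cl}_{L^2}(S(\bar u))$, because one must produce the scaling factors $\lambda_k$ that keep $\bar u+\lambda_k v^k$ admissible while letting $v^k\to v$ in $L^2$; this requires a mild measurable selection argument using that, on $\{\bar u_i=u_i^{\min}\}$ or $\{\bar u_i=u_i^{\max}\}$, the sign constraint on $v$ already prevents exiting the box, while on the inactive set any sufficiently small step stays admissible. All other steps are purely algebraic manipulations with the sign conditions \eqref{signPhi}.
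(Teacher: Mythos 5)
Your overall strategy coincides with the paper's: split into the two inclusions, use the fact that the sign constraints on the active sets pass to the $L^2$-limit, and then combine \eqref{F'exp} with \eqref{signPhi} to see that $\bar\Phi_i v_i\ge 0$ a.e., so that $F'(\ub)v=0$ forces $v_i=0$ a.e.\ on $\{\bar\Phi_i\neq 0\}$. That part of your argument is correct and is essentially verbatim what the paper does.

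The gap is in the density step $T(\ub)\subseteq \mathrm{cl}_{L^2}(S(\ub))$. The plain truncation $v^k=\max(\min(v,k\mathbf{1}),-k\mathbf{1})$ together with a single scalar $\lambda_k>0$ does not in general give $\ub+\lambda_k v^k\in\Uad$. On the inactive set $\{u_i^{\min}<\ub_i<u_i^{\max}\}$ the quantity $u_i^{\max}(t)-\ub_i(t)$ need not be bounded away from zero (take $\ub_i(t)=u_i^{\max}(t)-t$ near $t=0$), so there is no $\lambda_k>0$ for which $\ub_i+\lambda_k v^k_i\le u_i^{\max}$ a.e.\ when $v^k_i$ is positive there; your parenthetical ``depending on the $L^\infty$-distance from $\ub$ to the strictly inactive bounds'' tacitly assumes that this distance is positive, which is false in general, and the claim that ``on the inactive set any sufficiently small step stays admissible'' fails for the same reason. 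Likewise, on $\{\ub_i=u_i^{\min}\}$ the sign constraint only protects the lower bound, and one can still exit through $u_i^{\max}$ on the part of that set where the gap $u_i^{\max}-u_i^{\min}$ is arbitrarily small. The paper's construction \eqref{vki} repairs exactly this: in addition to truncating to $[-k,k]$, it sets $v_{k,i}=0$ wherever $\ub_i$ lies strictly within $1/k$ of a bound without attaining it; a case check then shows $\ub+k^{-2}v_k\in\Uad$, while $v_k\to v$ in $\Utwo$ by dominated convergence since the exceptional sets shrink to a null set and $|v_{k,i}|\le|v_i|$. With that modification your proof is complete.
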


\begin{proof}
We follow essentially the lines of the proof in \cite[p. 273]{CasasTroeltzsch2012}.
Let us use $K(\ub)$ to denote the set on the r.h.s. of \eqref{CharaCu}.

 First take $v\in C(\ub)$. Then $F'(\ub)v=0$ and $v$ is in $C(\ub)$ so by definition $v\in C(\ub)$, we used this below, we do not deduce it there exists $\{v_k\} \subseteq S(\ub)$ 
such that $v_k \to v$ in $\Utwo.$ By definition of $S(\ub)$ one has, necessarily, that  $v_{k,i}\geq0$ if $\ub_i = u^{\min}_i$ and $v_{k,i} \leq0$ if $\ub_i=u^{\max}_i,$ a.e. on $[0,T]$ and for every $i=1,\dots,n.$
Clearly, this property is preserved for the limit in $\Utwo,$ so that it also holds for $v.$
From this fact,  due to the expression of $F'$ given in \eqref{F'exp} and the first order necessary condition of \eqref{signPhi}, we deduce that
$$
0=F'(\ub)v = \sum_{i=1}^n \intT \bar\Phi_i(t)v_i(t) dt = \sum_{i=1}^n \intT \left| \bar\Phi_i(t)v_i(t) \right|dt,
$$ 
which implies that $v_i=0$ if $\bar\Phi_i \neq 0.$ This proves that $C(\ub) \subseteq K(\ub)$.

In order to prove the converse inclusion, take $v\in K(\ub)$, and define, for each positive integer $k$ and for each $i=1,\dots,n,$
\be
\label{vki}
v_{k,i}(t):=
\left\{
\begin{split}
&0 \qquad \text{if } u^{\min}_i(t)<\ub_i(t)<u^{\min}_i(t)+\frac1k \, \text{ or } \, u^{\max}_i(t)-\frac1k < \ub_i(t) < u^{\max}_i(t),\\
&\mathbb{P}_{[-k,k]}(v_i(t))\qquad \text{otherwise},
\end{split}
\right.
\ee
where $\mathbb{P}_{[-k,k]}$ denotes the pointwise projection onto the interval $[-k,k]$.
Thus $v_k\in \Uinfty,$ $v_k \to v$ in $\Utwo$ and it easily follows that $u_k:=\ub+\lambda_k v_k$ belongs to $\Uad$ for 
$\displaystyle\lambda_k := \frac{1}{k^2};$
hence $v_k \in S(\ub).$ Thus $K(\ub) \subseteq  {\rm cl}_{L^2}(S(\ub)) $. Finally, we observe that by definition of $C(\ub)$ and the expression \eqref{F'exp} of $F'$, one has that $F'(\ub)v=0.$ This shows that $v\in C(\ub)$ and concludes the proof.
\end{proof}

\subsection{Second order necessary optimality conditions}

In view of the second order continuous differentiability of $F$ in $\Utwo$, the conditions 
 {\rm \ref{C1}} and {\rm \ref{C2}} are trivially satisfied for our control problem. Therefore, we can directly
apply the general Theorem \ref{SONC2012} and set up second order necessary conditions for the control of our Fokker-Planck equation.

\begin{theorem}[Second order necessary optimality condition]
\label{SONC}
Let $\ub$ be an $L^\infty$-local solution for \eqref{P}. Then
\be
\label{inequalityNC}
F''(\ub)v^2 \geq 0\quad \text{for all } v\in C(\ub).
\ee
\end{theorem}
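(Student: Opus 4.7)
The plan is to invoke the abstract second order necessary condition of Theorem~\ref{SONC2012} from Casas--Tr\"oltzsch, applied to the identifications $U_\infty = L^\infty(0,T;\mathbb{R}^n)$, $U_2 = L^2(0,T;\mathbb{R}^n)$, $\mathcal{J} = F$, and $\mathcal{K} = \mathcal{U}_{\mathrm{ad}}$. That theorem requires two ingredients: condition \ref{C1} and the regularity condition $C(\bar u) = \mathrm{cl}_{L^2} D(\bar u)$. The first has already been verified in Proposition~\ref{PropA}, so the only substantive task is to check the regularity condition.

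The nontrivial inclusion is $C(\bar u) \subseteq \mathrm{cl}_{L^2} D(\bar u)$, since the reverse one follows at once from the continuity extension of $F'(\bar u)$ to $L^2(0,T;\mathbb{R}^n)$ (part of \ref{C1}) and the definition \eqref{cones}. To establish it, I would pick an arbitrary $v \in C(\bar u)$ and recycle the truncation-and-projection construction already used at the end of the proof of the characterization of $C(\bar u)$: for each positive integer $k$, define $v_k$ by formula \eqref{vki}. I would then check three things: (i) $v_k \in S(\bar u)$, which I already obtain from the argument producing $\lambda_k = 1/k^2$ and $\bar u + \lambda_k v_k \in \mathcal{U}_{\mathrm{ad}}$; (ii) $v_k \to v$ strongly in $L^2(0,T;\mathbb{R}^n)$, by dominated convergence, using that $v \in L^2$, the pointwise convergence of the projection $\mathbb{P}_{[-k,k]}(v_i) \to v_i$ and the fact that the ``zeroing'' region $\{u_i^{\min} < \bar u_i < u_i^{\min} + 1/k\} \cup \{u_i^{\max} - 1/k < \bar u_i < u_i^{\max}\}$ shrinks to a null set; (iii) $F'(\bar u) v_k = 0$.

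Step (iii) is the key point where the structure of $C(\bar u)$ enters. Using the expression \eqref{F'exp} for $F'(\bar u)$ together with the characterization \eqref{CharaCu}, one has $v_i(t) = 0$ whenever $\bar\Phi_i(t) \neq 0$. By construction, $v_{k,i}(t)$ either equals $\mathbb{P}_{[-k,k]}(v_i(t))$ or zero, so $v_{k,i}(t) = 0$ whenever $v_i(t) = 0$, and consequently $v_{k,i}(t) \bar\Phi_i(t) = 0$ a.e. Hence $F'(\bar u) v_k = \sum_i \int_0^T \bar\Phi_i v_{k,i}\, dt = 0$, so $v_k \in D(\bar u)$. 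Combining with (ii), we conclude $v \in \mathrm{cl}_{L^2} D(\bar u)$, which is exactly the regularity condition.

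With \ref{C1} and the regularity condition in hand, Theorem~\ref{SONC2012} yields $F''(\bar u) v^2 \geq 0$ for every $v \in C(\bar u)$, which is \eqref{inequalityNC}. The main obstacle is really just the regularity condition, and the truncation already tailored in \eqref{vki} does essentially all the work; the delicate book-keeping is simply matching the support of the approximants to the active constraint and strict-complementarity sets so that \eqref{signPhi} forces $F'(\bar u) v_k = 0$.
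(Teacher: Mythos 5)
Your proposal is correct and follows essentially the same route as the paper: both reduce the theorem to Theorem~\ref{SONC2012} via Proposition~\ref{PropA} plus the regularity condition $C(\ub)={\rm cl}_{L^2}D(\ub)$, and both establish the nontrivial inclusion with the truncation \eqref{vki}, noting that $|v_{k,i}|\leq |v_i|$ forces $\bar\Phi_i v_{k,i}=0$ a.e. and hence $F'(\ub)v_k=0$. Your added detail on the dominated-convergence argument for $v_k\to v$ in $L^2$ merely fills in what the paper leaves implicit.
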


\begin{proof}
In view of Theorem \ref{SONC2012}, it only remains to prove that $C(\ub) = {\rm cl}_{L^2}D(\ub).$ The inclusion $C(\ub) \supseteq {\rm cl}_{L^2}D(\ub)$ follows easily from the expression of $F'$ given in \eqref{F'exp}, since $\bar\Phi$ belongs to
$\Utwo$.
Let us show the converse inclusion, $C(\ub) \subseteq {\rm cl}_{L^2}D(\ub).$ Take $v\in C(\ub)$, and define $v_k,$  for each positive integer $k$, as done in \eqref{vki}.
Hence, we have $v_k \in S(\ub).$ By definition, we know that  $v_k \to v$ strongly in $\Utwo$.
Moreover, since $|v_{k,i}(t)|\leq |v_i(t)|$ a.e. on $[0,T]$ for every $i=1,\dots,n,$ we have in addition that 
$|\bar\Phi_i(t)v_{k,i}(t)|\leq |\bar\Phi_i(t)v_{i}(t)|=0$ a.e. on $[0,T].$
This implies $F'(\ub)v_k=0.$ Consequently, $v_k\in D(\ub)$ holds for every $k$ and, hence, $v\in {\rm cl}_{L^2}D(\ub).$ This yields the desired inclusion.

Finally, the necessary condition \eqref{inequalityNC} follows from Theorem \ref{SONC2012}.
\end{proof}

\if{ 

\begin{proposition}
\label{PropA} The conditions {\rm \ref{C1}} and {\rm \ref{C2}} are fulfilled for problem  \eqref{P}.
\end{proposition}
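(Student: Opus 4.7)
The plan is to verify (C1) and (C2) in order. For (C1), the $C^\infty$-regularity of $F$ on $L^\infty(0,T;\cR^n)$ follows at once by composing Corollary \ref{CorGCinfty} with the obvious smoothness of $J$ as a quadratic-plus-linear functional of $(\rho,u)\in W(0,T)\times L^\infty(0,T;\cR^n)$; there is nothing further to prove on that front. What actually requires work is the continuous extension of $F'(u)$ and $F''(u)$ to $\Utwo$. For the first derivative, the plan is to use the representation \eqref{F'exp} and check that each component $\Phi_i$ belongs to $L^2(0,T)$: since $\rho=G(u)\in C([0,T];\spaceH)$ and $p\in L^2(0,T;\spaceV)$, Cauchy--Schwarz in $x$ yields the required $L^2$-bound in $t$, so that $v\mapsto \sum_i\int_0^T \Phi_i v_i\,dt$ is an element of $\mathcal{L}(\Utwo;\cR)$.

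For the extension of $F''(u)$, the crucial input is Proposition \ref{Propestimatez}, which supplies
\[
\|z\|_{W(0,T)}+\|z\|_{C([0,T];\spaceH)}\leq C\|v\|_2
\]
when $z$ is the linearized state produced by an arbitrary $v\in\Utwo$. This bound controls each summand of the quadratic form \eqref{D2F} by a constant times $\|v\|_2^2$; the only slightly delicate term is $\iint_Q\nabla p\cdot(z\,b\otimes v)\,dx\,dt$, which I would estimate by Cauchy--Schwarz using $\|\nabla p\|_{L^2(Q)^n}$, $\|b\|_{L^\infty(\Omega)^n}$ and $\|z\|_{C([0,T];\spaceH)}$. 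A polarization argument then delivers the symmetric continuous bilinear extension of $F''(u)$ to $\Utwo\times\Utwo$.

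For (C2), the strategy is to pass to the limit in $F'(u_k)v_k=\sum_i\int_0^T\Phi_i[u_k]\,v_{k,i}\,dt$. Since $\Uad$ is bounded in $L^\infty(0,T;\cR^n)$ and $u_k\to\bar u$ in $\Utwo$, Proposition \ref{Propdeltarho} and Lemma \ref{LemDeltap} yield $\rho_k\to\bar\rho$ and $p_k\to\bar p$ strongly in $W(0,T)$, with Lipschitz constants that remain uniformly bounded along the sequence. In particular, $\rho_k\to\bar\rho$ in $C([0,T];\spaceH)$ and $\nabla p_k\to\nabla\bar p$ strongly in $L^2(Q)^n$. The split
\[
\Phi_i[u_k]-\bar\Phi_i=-\int_\Omega(\rho_k-\bar\rho)\,b_i\,\partial_{x_i}p_k\,dx-\int_\Omega\bar\rho\,b_i\,\partial_{x_i}(p_k-\bar p)\,dx+\gamma_i(u_{k,i}-\bar u_i)
\]
combined with Cauchy--Schwarz in $x$ and the strong convergences above then gives $\Phi_i[u_k]\to\bar\Phi_i$ strongly in $L^2(0,T)$. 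Pairing with the weakly convergent $v_{k,i}\rightharpoonup v_i$ in $L^2(0,T)$ and summing over $i$ establishes \eqref{limF'}.

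The main obstacle is the strong $L^2(Q)$-convergence of the adjoint gradients. This rests on Lemma \ref{LemDeltap}, whose proof exploits Assumption \ref{Hypb} in a decisive way to annihilate the boundary term generated by the perturbation $(b\otimes(u_2-u_1))\cdot\nabla p_1$ after integration by parts against a test function. Once this ingredient is secured, the remainder of the argument reduces to routine Cauchy--Schwarz bookkeeping.
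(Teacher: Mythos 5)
Your proof is correct and follows essentially the same route as the paper: (C1) is verified by showing $\Phi\in\Utwo$ and by bounding the only nontrivial term of $F''$ via the estimate of Proposition \ref{Propestimatez}, and (C2) by establishing strong $\Utwo$-convergence of $\Phi_k$ to $\bar\Phi$ using Proposition \ref{Propdeltarho} and Lemma \ref{LemDeltap} and then pairing with the weak convergence of $v_k$. Your explicit three-term splitting of $\Phi_i[u_k]-\bar\Phi_i$, which draws on the $W(0,T)$-part of Lemma \ref{LemDeltap} to obtain $\nabla p_k\to\nabla\bar p$ in $L^2(Q)^n$, is if anything slightly more careful than the paper's prose (which emphasizes only the $C([0,T];L^2(\Omega))$-convergence of $p_k$), but the underlying argument is identical.
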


\begin{proof}
 Recall the expressions for $F'$ and $F''$ given in \eqref{F'} and \eqref{D2F}.
 Let us first verify {\rm \ref{C1}}.
In view of the representation \eqref{F'exp},  the continuous extension of $F'$ is possible if  $ \Phi$
in \eqref{Phi} belongs to $\Utwo$. Obviously, this holds true if the functions $ \rho \, b_i \partial_{x_i} p$ belong to $L^2(0,T;L^1(\Omega))$, for each $1,\dots,n.$ This, however follows from $\rho \in C([0,T];L^2(\Omega))$, $\partial_{x_i} p \in L^2(0,T;L^2(\Omega))$, and $b_i \in L^\infty(\Omega)$.

Next, we confirm the associated extension property of $F''(u)$. First of all, we mention that the mapping $v \mapsto z = G'(u)v$ is continuous from $\Utwo$ to $W(0,T)$, hence also to $C([0,T];L^2(\Omega))\cap L^2(0,T;H^1(\Omega))$. Therefore, the only nontrivial term in the expression \eqref{F''} for $F''$ is the integral
\[
\iint_Q\nabla p \cdot (z_2 b \otimes v_1 + z_1 b\otimes v_2 )\, dxdt.
\]
In view of symmetry, it suffices to consider the term
\[
\iint_Q \nabla p \cdot (b \otimes v_1) z_2\, dxdt,
\]
and to show that the mapping
\[
(v_1,v_2) \longmapsto \iint_Q \nabla p \cdot (b \otimes v_1) z_2\, dxdt
\]
can be continuously extended to $\Utwo \times \Utwo$. 
 This is true, since
 \[
 \begin{aligned}
\left|\iint_Q \nabla p \cdot( b \otimes v_1) z_2\, dxdt\right| &\le  \|\nabla p\|_{L^2(0,T;L^2(\Omega)^n)} \|b\|_{L^\infty(\Omega)^n} \|v_1\|_2 \|z_2\|_{{C([0,T];\spaceH)}} \le C  \, \|p\|_{L^2(0,T;H^1(\Omega))}  \|v_1\|_2 \|v_2\|_2,
 \end{aligned}
 \]
 where we used the estimate \eqref{estimatez} for $z_2$. This is the desired extension property. 
\vspace{1.5ex}

  Now we are going to confirm  {\rm \ref{C2}}. Take a sequence $\{ (u_k,v_k)\} \subset \Uad \times \Utwo$ as it is required in {\rm \ref{C2}}, this is, with $u_k\to \ub$ in $\Utwo$ and $v_k \rightharpoonup v$ (weakly) in $\Utwo$. 
Let us prove that \eqref{limF'} holds.

We have
\be
\label{C2der}
F'(u_k) v_k = \sum_{i=1}^n \intT \underbrace{\left[ \intO \rho_k  b_i \frac{\partial p_k}{\partial x_i} dx + \gamma_i u_{k,i} + b_i \right]}_{\displaystyle\Phi_{k,i}} v_{k,i} dt,
\ee
where $\rho_k := G(u_k)$ and $p_k$ is the solution of \eqref{adjointeq} associated to $(\rho_k,u_k).$
Define  $\rhob:=G(\ub)$, let $\bar p$ denote the corresponding adjoint state  and $\bar\Phi$ be the associated function defined in \eqref{Phi}.
In view of the estimate \eqref{deltap} applied to $p_k-\bar p$, we have  $p_k \to \bar p$ in $C([0,T];L^2(\Omega))$.  From \eqref{estdeltarho},  we get $\rho_k \to \rhob$ in $C([0,T];\spaceH)$, then we deduce  that
$$
\left\|\intO \rho_k(\cdot)  b_i \frac{\partial p_k(\cdot)}{\partial x_i} dx-\intO \rhob(\cdot)  b_i \frac{\partial\bar p(\cdot)}{\partial x_i} dx\right\|_{L^2(0,T)} \to 0
$$
 for each $i=1,\dots,n.$ 
 Therefore, $\Phi_{k}$ given in \eqref{C2der} converges strongly in $\Utwo$ to $\bar\Phi$.
  Consequently, thanks to the convergence of a pairing of weakly-strongly convergent sequences, we can pass to the limit in \eqref{C2der}, and thus \eqref{limF'} follows.
\end{proof}

\deleted{By application of Theorem \ref{SONC2012}, we get the following result.}

}\fi

\subsection{Second order sufficient conditions} 

To apply the second order sufficient condition of the abstract Theorem \ref{SOSCCT}, we prove an additional estimate for the difference between adjoint states (see Lemma \ref{LemDeltap} below) and higher regularity of $\rho$ (see Theorem \ref{higher_reg1}). %

Before proceeding with the technical results, let us comment on a relevant point regarding the regularity of $F$. As correctly pointed out by one of the referees, by exploiting the fact that $D^2 F$ is locally Lipschitz continuous in our setting, one could dispense the use of Lemma \ref{LemDeltap} below and simplify some points of the proof of Proposition \ref{PropC3} that comes later. Nevertheless, for a sake of generality, we chose to keep the current approach for the present version of the manuscript, that holds for costs that are merely twice continuously differentiable.\\

We impose the following additional hypothesis, that is assumed to hold throughout the remainder of the article.
\begin{assumption}
\label{Hypb}
The function $b$ belongs to $W^{1,\infty}(\Omega;\cR^n)$ and, for a.a. $x \in \Gamma,$ it holds 
\[
\big(b(x) \otimes u\big) \cdot n(x) = 0 \quad  \text{for all } u \in \mathbb{R}^n.
\]
\end{assumption}
\begin{remark}
 Assumption \ref{Hypb} above is fulfilled in particular, if $b \in W^{1,\infty}_0(\Omega;\cR^n)$. 
 Moreover, it holds if $b(x) \cdot n(x) = 0$ a.e. on $\Gamma$ and the control is of the form $u(t)= \mathfrak{u}(t) \big(1,\dots,1\big) \in \cR^n,$ with $\mathfrak{u}$ being a scalar function.
 \end{remark}

Let us first prove the following technical result.

\begin{lemma} 
\label{LemDeltap}
Given $u_1,u_2\in \Uinfty$ with associated adjoint states $p_1,  p_2 \in W(0,T)$, respectively, we have
\be
\label{deltap}
\|p_2- p_1\|_{W(0,T)} \leq C\|p_1\|_{C([0,T];L^2(\Omega))}\|u_2-u_1\|_2,
\ee
where $C$ depends continuously on $\|u_2\|_\infty.$
\end{lemma}
\begin{proof}
In view of Proposition \ref{P5.1}, we have $p_1,  p_2 \in W(0,T)$, since $u_1,u_2\in \Uinfty$ is assumed.
Setting $\delta p:= p_2-p_1,$ we obtain
\be
\label{adjointdiff}
\begin{split}
- \partial_t \delta p - \nu \Delta \delta p+B[u_2]\cdot \nabla\delta p &=   {\alpha_Q}( \rho_2-\rho_1)  - \big({b \otimes (u_2- u_1)}\big)\cdot \nabla {p_1},\\
 \delta p(T) &= {\alpha_\Omega}( \rho_2(T)-\rho_1(T))\quad \text{on } \Omega,\\
 \partial_n(\delta p) &= 0\quad \text{on } \Sigma.
\end{split}
\ee
We can write the first two equations as
\begin{equation*}
\begin{split}
- \partial_t \delta p + a[u_2](\cdot,\delta p) &= f,\\
 \delta p(T) &={\alpha_\Omega}( \rho_2(T)-\rho_1(T)),
 \end{split}
\end{equation*}
where $f:= {\alpha_Q} (\rho_2-\rho_1) - \big(b \otimes (u_2- u_1)\big)\cdot \nabla {p_1}$.
For $\varphi \in \spaceV,$ we have
\[
\begin{split}
\iint_Q f \varphi dxdt &= \iint_Q   \Big(\alpha_Q(\rho_2-\rho_1) - \big(b \otimes (u_2- u_1)\big)\cdot \nabla {p_1}\Big) \varphi\,dxdt\\
&= \iint_Q  \Big( \alpha_Q(\rho_2-\rho_1)\varphi + p_1  {\rm div} \big(b \otimes (u_2- u_1)\varphi\big)\Big)  dxdt  \\
&\qquad -\iint_\Sigma  \varphi p_1 \big(b \otimes (u_2- u_1)\big) \cdot n ds dt\\
&= \iint_Q  \Big( \alpha_Q(\rho_2-\rho_1)\varphi + p_1  {\rm div} \big(b \otimes (u_2- u_1)\varphi\big)\Big)  dxdt  =: F(\varphi),
\end{split}
\]
where we used Assumption \ref{Hypb} in the third equality, and
where the functional $F$ belongs to $L^2(0,T;H^1(\Omega)^*)$. 
The term 
\[
q:= p_1 {\rm div} (b\otimes(u_2-u_1) \varphi)= p_1\big( \underbrace{{\rm div} (b\otimes(u_2-u_1))}_{L^2(0,T;L^\infty(\Omega))}\big)\varphi + p_1\underbrace{(b\otimes(u_2-u_1))}_{L^2(0,T;L^\infty(\Omega)^n)} \cdot \nabla \varphi.
\]
belongs to $L^1(Q)$, because we have  $p_1 \in C([0,T];L^2(\Omega))$. It can be estimated by
\[
\|q\|_{L^1(Q)} \le C\, \|p_1\|_{C([0,T];L^2(\Omega))} \|u_2-u_1\|_2 \|\varphi\|_{H^1(\Omega)^*}
\]
Now the claim follows easily from Propositions \ref{Propdeltarho} and \ref{P5.1} by estimating $F$.
\end{proof} 

For the remainder of the article, we additionally impose the following hypotheses, along with Assumption \ref{Hypb} introduced above.

\begin{assumption}[Requirements for higher regularity] \label{A6.1}
~

\begin{itemize}
\item[(i)]  The function $c$ has a potential $-V \in 
W^{2,\infty}(\Omega)$ so that 
\begin{equation} 
\label{potential}
c = \nabla V
\end{equation}
or $c$ belongs to $W^{1,\infty}(\Omega,\mathbb{R}^n)$ and satisfies
\begin{equation} 
\label{orthogonality}
c(x) \cdot n(x) = 0 \quad \mbox{a.e. on } \Gamma,
\end{equation}
\item[(ii)] the initial and the desired distributions $ \rho_0$ and $\rho_\Omega$ belong to $H^1(\Omega)$,
\item[(iii)]  and
$$
\min_{i=1,\dots,n} \gamma_i >0.
 $$
\end{itemize}
 \end{assumption}
 \begin{remark} The assumption \eqref{potential} that $c$ has a potential  was imposed by Breiten {\em et al.} \cite{BreitenKunischPfeiffer2018} to gain higher regularity of $\rho$.
\end{remark}
For smooth domains and a real-valued control, the next result follows from Breiten, Kunisch and Pfeiffer \cite[Proposition 6.1]{BreitenKunischPfeiffer2018}, who proved $W(0,T;H^2(\Omega),L^2(\Omega))$-regularity of $\rho$ by a Galerkin technique.
We extend the $C([0,T];H^1(\Omega))$-regularity of $\rho$ to bounded Lipschitz  domains
and to our setting of a vector-valued control. 
\begin{theorem}
\label{higher_reg1} 
If Assumptions \ref{Hypb} and \ref{A6.1} are fulfilled, 
 then, given any control $u\in L^\infty(0,T;\cR^n),$ the unique weak solution $\rho$ of the Fokker-Planck equation \eqref{FP},\eqref{initcond} belongs to $C([0,T];H^1(\Omega)).$
\end{theorem}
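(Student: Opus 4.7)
The plan is to combine the reduction already sketched in the paper preceding the theorem with the maximal parabolic regularity of Lemma~\ref{Lmaxparabreg}. Two cases need to be handled separately, according to whether $c$ satisfies the orthogonality condition \eqref{orthogonality} or the potential condition \eqref{potential}, but the strategy in both is the same: rewrite the state equation as a linear heat equation with homogeneous Neumann boundary condition, verify that the resulting right-hand side $\hat f$ lies in $L^2(Q)$, and then invoke Lemma~\ref{Lmaxparabreg} together with $\rho_0\in H^1(\Omega)$ from Assumption~\ref{A6.1}(ii).

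First I would treat the orthogonality case, in which $c, b \in W^{1,\infty}(\Omega;\cR^n)$ and $c\cdot n = (b\otimes u)\cdot n = 0$ on $\Gamma$. The state equation then reduces to \eqref{heateq} with $\hat f=\div(\rho B[u])$ as in \eqref{hatf1}. Expanding, $\hat f = B[u]\cdot \nabla\rho + \rho\, \div B[u]$. Since $\rho\in W(0,T)$ by Theorem~\ref{ThmExistSol} we have $\nabla\rho\in L^2(Q)^n$ and $\rho\in L^2(Q)$, while $B[u]\in L^\infty(Q;\cR^n)$ and $\div B[u] = \div c + \div(b\otimes u)\in L^\infty(Q)$ thanks to the $W^{1,\infty}$-regularity of $c$ and $b$. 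Hence $\hat f\in L^2(Q)$, and Lemma~\ref{Lmaxparabreg} directly yields $\rho\in C([0,T];H^1(\Omega))$.

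In the potential case $c=\nabla V$ with $-V\in W^{2,\infty}(\Omega)$, I would follow the paper's transformation $w=\exp(V/\nu)\rho$. As shown just before the theorem, $w$ satisfies \eqref{heateq} with homogeneous Neumann data and $\hat f$ given by \eqref{hatf2}. Each summand lies in $L^2(Q)$: the first because $c\in L^\infty$ and $\nabla w\in L^2(Q)^n$; the second because $w\in L^2(Q)$ and $c, b, u$ are all bounded; the third because $\div(w\, b\otimes u) = b\otimes u\cdot \nabla w + w\,\div(b\otimes u)$, which is in $L^2(Q)$ thanks to $b\in W^{1,\infty}(\Omega;\cR^n)$. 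The initial datum transforms to $w_0 = \exp(V/\nu)\rho_0$, which lies in $H^1(\Omega)$ because $V\in W^{2,\infty}(\Omega)$ implies $\exp(V/\nu)\in W^{1,\infty}(\Omega)$ and pointwise multiplication by such a factor preserves $H^1(\Omega)$. Lemma~\ref{Lmaxparabreg} then gives $w\in C([0,T];H^1(\Omega))$, and multiplying back by $\exp(-V/\nu)\in W^{1,\infty}(\Omega)$ preserves this regularity, delivering $\rho\in C([0,T];H^1(\Omega))$.

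The main technical point is the verification that $\hat f \in L^2(Q)$, and in particular that the divergence terms involving $B[u]$ or $b\otimes u$ are bounded; this is exactly what Assumptions \ref{Hypb} and \ref{A6.1}(i) are designed to guarantee. Once that is in place, the rest is a direct appeal to the maximal regularity result and, in the potential case, a routine transformation argument using that $e^{\pm V/\nu}$ and its gradient are bounded.
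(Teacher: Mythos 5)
Your proposal is correct and follows essentially the same route as the paper: the paper's own proof simply refers back to the reduction to the heat equation \eqref{heateq} with $\hat f\in L^2(Q)$ given by \eqref{hatf1} or \eqref{hatf2} and then invokes Lemma \ref{Lmaxparabreg}. You merely spell out in more detail the verification that $\hat f\in L^2(Q)$ and the (correct) observations that $\exp(\pm V/\nu)\in W^{1,\infty}(\Omega)$ preserves both the $H^1$ initial datum and the $C([0,T];H^1(\Omega))$ regularity under the back-transformation.
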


The proof can be found in the Appendix \ref{appendix}.

\begin{proposition}
\label{PropC3}
Under Assumptions \ref{Hypb} and \ref{A6.1}, problem \eqref{P} satisfies condition {\rm \ref{C3}}.
\end{proposition}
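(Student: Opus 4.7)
The plan is to verify the two inequalities \eqref{liminfD2J} and \eqref{liminfD2J0} of condition \ref{C3} by working directly with the explicit expression \eqref{D2F} of $F''(u)v^2$. The Tikhonov term $\sum_i \gamma_i\int_0^T v_i^2\,dt$ will provide the coercivity required in \eqref{liminfD2J0}, while the higher regularity furnished by Theorem \ref{higher_reg1} will give the control on $\nabla p_k$ needed to treat the cross term.

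I would first collect the convergence and regularity facts. From $u_k \to \bar u$ in $\Utwo$ and the uniform $L^\infty$-bound provided by $\Uad$, Proposition \ref{Propdeltarho} gives $\rho_k := G(u_k) \to \bar\rho := G(\bar u)$ in $W(0,T)\cap C([0,T];L^2(\Omega))$, and Lemma \ref{LemDeltap} gives the analogous $p_k \to \bar p$; in particular $\nabla p_k \to \nabla \bar p$ strongly in $L^2(Q)^n$. For the cross term I shall additionally need a uniform bound on $\nabla p_k$ in $L^\infty(0,T;L^2(\Omega))$, which I would obtain by rewriting the adjoint equation in the form $-\partial_t p - \nu\Delta p = \alpha_Q(\rho-\rho_Q) - B[u]\cdot\nabla p$, whose right-hand side is in $L^2(Q)$ and whose terminal datum $\alpha_\Omega(\rho(T)-\rho_\Omega)$ lies in $H^1(\Omega)$ by Theorem \ref{higher_reg1} and Assumption \ref{A6.1}(ii); Lemma \ref{Lmaxparabreg} then yields $p_k \in C([0,T];H^1(\Omega))$ with a bound uniform in $k$. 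Using Proposition \ref{Propestimatez} and the boundedness of $\{v_k\}$ in $\Utwo$, the linearized states $z_k := G'(u_k)v_k$ are uniformly bounded in $W(0,T)$; a standard Aubin--Lions argument provides, along a subsequence, $z_k \to \bar z$ strongly in $L^2(Q)$ and weakly in $W(0,T)$, while passing to the limit in \eqref{lineq} identifies $\bar z = G'(\bar u)v$. The continuous embedding $W(0,T)\hookrightarrow C([0,T];L^2(\Omega))$ additionally yields $z_k(T) \rightharpoonup \bar z(T)$ weakly in $L^2(\Omega)$.

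For \eqref{liminfD2J}, I would pass to the limit termwise in \eqref{D2F}: strong $L^2(Q)$-convergence handles $\alpha_Q\iint_Q z_k^2$, weak lower semicontinuity of the squared $L^2$-norm handles both $\sum_i\gamma_i\|v_{k,i}\|_2^2$ and $\alpha_\Omega\|z_k(T)\|_{L^2(\Omega)}^2$, and the cross term
\[
\iint_Q \nabla p_k\cdot(z_k b\otimes v_k)\,dxdt = \sum_{i=1}^n \int_0^T v_{k,i}(t)\,\psi_{k,i}(t)\,dt,\qquad \psi_{k,i}(t):=\int_\Omega z_k\,b_i\,\partial_{x_i}p_k\,dx,
\]
converges because $\psi_{k,i}\to\psi_i$ strongly in $L^2(0,T)$. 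This last point is the main obstacle of the proof: I would split $\psi_{k,i}-\psi_i = \int_\Omega(z_k-\bar z)b_i\,\partial_{x_i}p_k\,dx + \int_\Omega\bar z\,b_i\,\partial_{x_i}(p_k-\bar p)\,dx$ and apply Cauchy--Schwarz pointwise in $t$; the first summand is controlled in $L^2(0,T)$ by $\|b_i\|_{L^\infty}\,\|\nabla p_k\|_{L^\infty(0,T;L^2)}\,\|z_k-\bar z\|_{L^2(Q)}\to 0$ (where the uniform $L^\infty(0,T;L^2)$-bound on $\nabla p_k$ from the regularity step is indispensable), and the second by $\|b_i\|_{L^\infty}\,\|\bar z\|_{C([0,T];L^2)}\,\|\nabla(p_k-\bar p)\|_{L^2(Q)}\to 0$. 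A weak--strong pairing with $v_{k,i}\rightharpoonup v_i$ then completes the argument.

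Finally, for the coercivity \eqref{liminfD2J0}, I would use that $v=0$ forces $\bar z = G'(\bar u)\cdot 0 = 0$, so that $z_k \to 0$ strongly in $L^2(Q)$. The cross term is then bounded by
\[
\|b\|_{L^\infty(\Omega)^n}\,\|\nabla p_k\|_{L^\infty(0,T;L^2(\Omega)^n)}\,\|z_k\|_{L^2(Q)}\,\|v_k\|_2 \longrightarrow 0,
\]
since $\|z_k\|_{L^2(Q)}\to 0$ while the remaining factors are uniformly bounded. Combined with $\alpha_Q\iint_Q z_k^2\to 0$ and $\alpha_\Omega\|z_k(T)\|_{L^2(\Omega)}^2\ge 0$, this yields $\liminf_k F''(u_k)v_k^2 \ge \liminf_k\sum_i\gamma_i\|v_{k,i}\|_2^2 \ge \gamma_{\min}\liminf_k\|v_k\|_2^2$, where $\gamma_{\min}:=\min_i\gamma_i>0$ by Assumption \ref{A6.1}(iii); this is \eqref{liminfD2J0} with $\Lambda:=\gamma_{\min}$.
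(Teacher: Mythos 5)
Your proof is correct and follows the same overall strategy as the paper (pass to the limit term by term in the quadratic form \eqref{D2F}, with the Tikhonov term supplying the coercivity in \eqref{liminfD2J0}), but it handles the critical cross term by a genuinely different and more economical decomposition. The paper's proof establishes the \emph{strong} convergence $\nabla p_k \to \nabla \bar p$ in $C([0,T];L^2(\Omega)^n)$, and to get this it must first prove $\rho_k \to \bar\rho$ in $C([0,T];H^1(\Omega))$ --- the longest and most technical step of the argument, requiring a case distinction between the hypotheses \eqref{potential} and \eqref{orthogonality} of Assumption \ref{A6.1} and a careful estimate of $\|\div(\rho_k B[u_k]) - \div(\bar\rho B[\ub])\|_{L^2(Q)}$. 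Your splitting $\int_\Omega(z_k-\bar z)b_i\,\partial_{x_i}p_k\,dx + \int_\Omega \bar z\, b_i\,\partial_{x_i}(p_k-\bar p)\,dx$ needs only (i) a \emph{uniform bound} on $\nabla p_k$ in $L^\infty(0,T;L^2(\Omega)^n)$ and (ii) the convergence $\nabla(p_k-\bar p)\to 0$ in $L^2(Q)^n$, which is already available from Lemma \ref{LemDeltap}; boundedness is much cheaper than convergence here, so step 1c of the paper's proof is bypassed entirely. What the paper's stronger convergence buys is robustness (it would survive weaker pairings of $z_k$ and $v_k$), but for the present purpose your route suffices. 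Two small points you should make explicit: the uniform $C([0,T];H^1(\Omega))$ bound on $p_k$ requires, via the estimate \eqref{a_priori_est} of Lemma \ref{Lmaxparabreg}, a \emph{uniform} $H^1(\Omega)$ bound on the terminal data $\alpha_\Omega(\rho_k(T)-\rho_\Omega)$, which in turn follows from applying \eqref{a_priori_est} to the state equation in heat-equation form (with the transformation $w=\exp(V/\nu)\rho$ in the case \eqref{potential}) --- Theorem \ref{higher_reg1} gives membership in $H^1(\Omega)$ but the uniformity must be traced through the estimate; and the liminf inequalities should formally be established along a subsequence realizing the liminf of $F''(u_k)v_k^2$ (or one notes that the strong $L^2(Q)$ limit of $z_k$ is independent of the subsequence, so the whole sequence converges). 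Neither is a gap in the idea, only in the write-up.
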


\begin{proof}
Consider a sequence $\{(u_k,v_k)\} \subset \Uad \times \Utwo$ with $u_k\to \ub$ in $\Utwo$ and $v_k \rightharpoonup v$ weakly in $\Utwo$ as in  {\rm \ref{C3}}. 
{For each $k,$ let $z_k$ be the linearized state associated to control $u_k$ in the direction $v_k.$}
We will proceed in several steps.

{\bfseries 1) Proof of \eqref{liminfD2J}.} 

{\em 1a)  Convergence of ${\{z_k\}}$.}
First, we show that $\{z_k\}$ has a well-defined limit. To this aim, we write the linearized equation for $z_k$
in the following form:
\begin{multline}
\label{lineareqk1}
\iint_Q \big( - z_k \partial_t \varphi + (\nu\nabla z_k+z_k B[\ub])\cdot \nabla \varphi \big) dxdt \\
= \iint_Q z_k b \otimes (\ub-u_k)\cdot \nabla \varphi \, dxdt - \iint_Q\rho_k (b\otimes v_k) \cdot \nabla \varphi \, dxdt,
\end{multline}
for all $\varphi \in W^{1,1}_2(Q)$ with $\varphi(\cdot,T)=0$. All controls $u_k$ belong to $\Uad$ and are therefore uniformly bounded in $\Uinfty$. Hence, thanks to Proposition \ref{PropG'}, the functions $z_k$ vary in a bounded set of $C([0,T];\spaceH)$. In view of this and since $u_k \to \ub$ in $\Utwo$, the term  $z_k b \otimes (\ub-u_k)$ under the first integral in the r.h.s. of \eqref{lineareqk1} tends to zero in $L^2(Q;\cR^n)$. Moreover, it is easy to confirm that
\[
\rho_k (b\otimes v_k) \rightharpoonup \bar\rho (b\otimes v) \mbox{ in } L^2(Q;\cR^n),\quad \text{ when } k \to \infty,
\]
since $\rho_k \to \bar \rho$ in $C([0,T];L^2(\Omega))$ in view of \eqref{estdeltarho}.
Therefore, the functional in the r.h.s. of \eqref{lineareqk1} generated by these terms converges weakly in $L^2(0,T;H^1(\Omega)^*)$. The solution operator associated with the differential operator of the l.h.s. is linear and continuous from $L^2(0,T;H^1(\Omega)^*)$
to $W(0,T)$, hence it preserves weak convergence, so that $\{z_k\}$ is weakly convergent to some $z$ in $W(0,T)$ that  satisfies the equation 
\be
\label{lineareqk2}
\iint_Q \left( - z \partial_t \varphi + (\nu\nabla z+zB[\ub])\cdot \nabla \varphi \right) dx\, dt
= - \iint_Q\bar\rho (b\otimes v) \cdot \nabla \varphi \, dx\,dt \quad 
\ee
for all $\varphi$ mentioned above. Hence $z$ is the linearized state associated to $\bar\rho$ in the direction $v.$

In order to show \ref{C3}, let us take a subsequence $\{(u_{k_j},v_{k_j})\}$ such that
$$
\liminf_{k\to \infty} F''(u_k) v_k^2 = \lim_{j\to \infty} F''(u_{k_j})v_{k_j}^2.
$$
The corresponding subsequence $\{z_{k_j}\}$ converges weakly to $z$ in $W(0,T)$ and then, in view of Aubin-Lions' Lemma \cite{Aubin1963}, it contains a subsequence $\{z_{k_{j_\ell}}\}$ that converges strongly to $z$, i.e.
\be\label{aux1}
z_{k_{j_\ell}} \to z \quad \text{in } L^2(Q).
\ee
In order to simplify the notation, let us use the subindex $k$ for this subsequence.
One has
\be
\label{F2k}
F''(u_k) v_k^2 = \iint_Q \Big[ z_k^2-2\nabla p_k \cdot ( z_k  b\otimes v_k)\Big] dxdt + \sum_{i=1}^n \gamma_i\intT v_{k,i}^2 dt + \intO z_k(T)^2 dx,
\ee
for $z_k$  satisfying \eqref{lineareqk1}.

\vspace{5pt}

1b) Convergence of ${\{\rho_k\}}$ in $C([0,T];\spaceV)$.

Let us first discuss the case where $c$ obeys the condition \eqref{orthogonality} of Assumption \ref{A6.1}.
From Lemma \ref{Lmaxparabreg} applied to $\rho_k$, we have
\[
\|\rho_k\|_{W(0,T)} + \|\rho_k\|_{C([0,T];\spaceV)} \leq C \big( \|\div(\rho_k B[u_k])\|_{L^2(Q)} + \|\rho_0\|_{\spaceV}\big).
\]
Since $\{\rho_k\}$ is bounded in $W(0,T)$ and $\{u_k\}$ is bounded in $\Uinfty,$ the sequence $\{\div(\rho_k B[u_k])\}$ is bounded in $L^2(Q)$. 
Consequently, from previous display we deduce that $\{\rho_k\}$ is bounded in $C([0,T];\spaceV).$ For $\rho_k-\bar\rho$, we have the  equation
\[
\frac{\partial(\rho_k-\bar\rho)}{\partial t} - \nu \Delta(\rho_k-\bar\rho) =\underbrace{\div(\rho_k B[u_k])- \div (\bar\rho B[\ub])}_{\displaystyle\hat f_k}
\]
with initial condition $(\rho_k-\bar\rho)(0)=0$ and  homogeneous Neumann condition $\nabla(\rho_k-\bar\rho) \cdot n =0.$ For $\hat f_k$, we have
\begin{equation*}
\begin{split}
&\|\hat f_k\|_{L^2(Q)} =  \big\|\div(\rho_k B[u_k])- \div (\bar\rho B[\ub]) \pm \div(\rho_k B[\ub])\big\|_{L^2(Q)} \\
& \leq  \left\|\div\big(\rho_k (B[u_k]-B[\ub])\big)+ \div \big((\rho_k-\bar\rho) B[\ub]\big) \right\|_{L^2(Q)} \\
& \leq \sum_{i=1}^n \left[ \left( \left\| \frac{\partial \rho_k}{\partial x_i} \right\|_{C([0,T];\spaceH)} \|b_i\|_{L^\infty(\Omega)} 
+ \|\rho_k\|_{C([0,T];\spaceH)} \left\| \frac{\partial b_i}{\partial x_i} \right\|_{L^\infty(\Omega)} \right)\|u_{k,i}-\ub_i\|_2 \right.\\
&\,\,\,\,\,\,+\left. \left\|\frac{\partial (\rho_k-\bar\rho)}{\partial x_i}\right\|_{L^2(Q)} \|c_i+b_i\ub_{k,i}\|_{L^\infty(Q)} + \|\rho_k-\bar\rho\|_{L^2(Q)} \left\|\frac{\partial c_i}{\partial x_i}+\frac{\partial b_i}{\partial x_i} \ub_{k,i}\right\|_{L^\infty(Q)}\right].
\end{split}
\end{equation*}
Since $\{\rho_k\}$ is bounded in $C([0,T];\spaceV),$ $u_k \to \ub$ in $L^2(0,T;\cR^n)$ and $\rho_k \to \bar\rho$ in $W(0,T),$ the r.h.s. of latter display tends to 0, and hence $\|\hat f_k\|_{L^2(Q)}\to 0.$ We then have that $\rho_k \to \bar\rho$ in $C([0,T];\spaceV).$

Now we briefly discuss the other case, where $c$ fulfils condition \eqref{potential} of Assumption \ref{A6.1}.
Here, we invoke again the transformation $w_k = \exp{(V/\nu)}\rho_k$ that we used to transform the Fokker-Planck equation to one with homogeneous boundary condition. Thanks to \eqref{hatf2}, the functions $w_k$ satisfy the linear heat equation \eqref{heateq} with r.h.s.
\begin{equation*} 
\label{hatf3}
\tilde f_k := - c \cdot \nabla w_k - \frac{1}{\nu} w_k c\cdot b\otimes u_k + {\rm div} (w_k b\otimes u_k).
\end{equation*}
It is obvious that $w_k$ converges to $\bar w = \exp{(V/\nu)}\bar \rho$ in $W(0,T)$. This yields
$\tilde f_k \to \bar f$ in $L^2(Q)$, where
\[
\bar f := - c \cdot \nabla \bar w - \frac{1}{\nu} \bar w c\cdot b\otimes \bar u + {\rm div} (\bar w b\otimes \bar u).
\]
Setting now $\hat f_k := \tilde f_k - \bar f$, we have $\hat f_k \to 0$ in $L^2(Q)$. In the same way as above, Lemma \ref{Lmaxparabreg} yields that $w_k - \bar w \to 0$ in  $C([0,T];\spaceV)$. This convergence transfers to the sequence $\rho_k - \bar \rho$. 

\vspace{5pt}

1c) Convergence of $\{p_k\}$ in $C([0,T];\spaceV)$.
 From the adjoint equation \eqref{adjoint}, we get, for a generic $p$ associated with some state-control pair $(\rho,u),$ with $u \in \Uinfty,$
\begin{equation}
\label{adjoint2}
\begin{split}
- \partial_t p - \nu \Delta p  &= {\alpha_Q}(\rho-\rho_Q)-B[u]\cdot \nabla p \quad \text{in } Q,\\
 p(T) &={\alpha_\Omega}(\rho(T)-\rho_\Omega) \quad \text{ in } \Omega,\\
 \partial_np &= 0 \quad \text{on } \Sigma.
\end{split}
\end{equation}
Since $u \in \Uinfty$, the r.h.s. ${\alpha_Q}(\rho-\rho_Q)-B[u]\cdot \nabla p$ belongs to $L^2(Q)$. Moreover, by Assumption \ref{A6.1}, (ii), ${\alpha_\Omega}( \rho(T)-\rho_\Omega)$ is in  $\spaceV.$ We can apply the regularity-Lemma \ref{Lmaxparabreg} and get that $\bar p$ and $p_k$ are in $C([0,T];\spaceV).$  Set $\delta p_k := p_k-\pb$. Then, from \eqref{adjointdiff}, we obtain, 
\be
\label{deltapk}
\begin{split}
- \frac{\partial\delta p_k}{\partial t} - \nu \Delta \delta p_k &=  \underbrace{{\alpha_Q}( \rho_k-\bar\rho)  - ({b \otimes (u_k- \bar u)}\big)\cdot \nabla \bar p - B[u_k]\cdot \nabla\delta p_k}_{\displaystyle \hat g_k}\quad \text{in } Q,\\
 \delta p_k(T) &= {\alpha_\Omega}\big( \rho_k(T) - \bar\rho(T) \big)\quad \text{on } \Omega,\\
 \partial_n(\delta p_k) &= 0\quad \text{on } \Sigma.
\end{split}
\ee
This is again a linear heat equation with homogeneous Neumann condition and r.h.s $\hat{g}_k$ in $L^2(Q)$.
Notice that all $u_k$ are uniformly bounded in $\Uinfty$ since they belong to $\Uad.$  Hence, Lemma \ref{Lmaxparabreg} applied to \eqref{deltapk} guarantees that $\delta p_k$  satisfies the estimate
\begin{equation}
\label{estimatedeltapk}
\|\delta p_k\|_{W(0,T)} + \|\delta p_k\|_{C([0,T];\spaceV)} \leq C\Big(\|\hat g_k\|_{L^2(Q)} + \| \rho_k(T) - \bar\rho(T)\|_{\spaceV}\Big).
\end{equation}
 For $\|\hat g_k\|_{L^2(Q)},$ we have
\begin{equation}
\label{hatgk}
\begin{split}
\|\hat g_k\|_{L^2(Q)} \leq &
\,\alpha_Q \|\rho_k-\bar\rho\|_{L^2(Q)} + \sum_{i=1}^n \|b_i\|_{L^\infty(\Omega)}\|u_{k,i}-\ub\|_2\left\| \frac{\partial \pb}{\partial x_i} \right\|_{C([0,T];\spaceH)}\\
&+\sum_{i=1}^n \big( \|c_i\|_{L^\infty(\Omega)}+\|b_i\|_{L^\infty(\Omega)}\|u_{k,i}\|_\infty\big)\left\| \frac{\partial \delta p_k}{\partial x_i} \right\|_{L^2(Q)}.
\end{split}
\end{equation}
From Lemma \ref{LemDeltap}, we know that $\nabla\delta p_k \to 0$ in $L^2(Q)^n$ since $u_k\to \ub$ in $\Utwo$ hence, the r.h.s. of latter display tends to 0 and, consequently, $\|\hat g_k\|_{L^2(Q)} \to 0.$ This fact, together with \eqref{estimatedeltapk} and the convergence $\rho_k \to \bar\rho$ in $C([0,T];\spaceV)$ proved in 1b) above, yield that $p_k \to \bar p$ in $C([0,T];\spaceV)$ as desired. This implies in particular
the convergence $\nabla p_k \to \nabla \bar p \quad \text{in } C([0,T];L^2(\Omega)^n)$ that is used in the next formula.

{\em 1d) Proof of \eqref{liminfD2J}.}  We are now ready to check that \eqref{liminfD2J} is satisfied.  Let us recall the expression of $F''(u_k)v_k^2$ given in \eqref{F2k}. For the bilinear term $\nabla p_k \cdot (z_k b\otimes v_k),$ we know from 1c) that $\nabla p_k \to \nabla \bar p \quad \text{in } C([0,T];L^2(\Omega)^n)$ and from \eqref{aux1} that $z_k \to z$ (strongly) in $L^2(Q)$. Moreover, we have $v_k \rightharpoonup v$ in $\Utwo.$ Therefore, the product $z_k b\otimes v_k$ converges weakly in $L^1(0,T;L^2(\Omega)^n)$ and
\begin{equation*}
\begin{split}
\iint_Q &\Big(\nabla p_k \cdot (z_k b\otimes v_k) - \nabla \pb \cdot (z b\otimes v) \Big)dx dt\\
&= \iint_Q \Big( \underbrace{(\nabla p_k-\nabla \pb)}_{C([0,T];L^2(\Omega)^n)} \cdot \underbrace{(z_k b\otimes v_k)}_{L^1(0,T;L^2(\Omega)^n)} \\
&\qquad+\, \nabla \pb \cdot \big((z_k-z) b\otimes v_k\big) + \nabla \pb \cdot (z b\otimes (v_k-v)) \Big)dx dt
\end{split}
\end{equation*}
tends to 0.

It remains to pass to the limit in the other terms of $F''(u_k)v_k^2.$ 
Notice that one has 
$$\iint_Q z_k^2 dxdt  \to \iint_Q z^2 dxdt$$ 
in view of the strong convergence $z_k \to z$ in $L^2(Q)$. Moreover, due to the weak convergence $v_k \rightharpoonup v$ in $L^2(0,T;\cR^n),$ one gets 
$$
\sum_{i=1}^n \gamma_i \intT v_i^2 dt \leq \liminf_{k\to \infty} \sum_{i=1}^n \gamma_i \intT v_{k,i}^2 dt.
$$ 
Since the mapping $z \mapsto z(\cdot,T)$ is linear and continuous from $W(0,T)$ to $L^2(\Omega),$ it is also weakly lower semicontinuous and, therefore, 
$$
\int_\Omega z(T)^2 dx \leq \liminf_{k\to \infty} \int_\Omega z_k(T)^2 dx.
$$
This concludes the proof of \eqref{liminfD2J}.

\vspace{7pt}

{\bfseries 2) Proof of \eqref{liminfD2J0}.}

It remains to show \eqref{liminfD2J0}. To this end, we assume that $v=0,$ this is $v_k  \rightharpoonup 0$ in $L^2(0,T;\cR^n).$ Thus, the corresponding limit $z$ of $\{z_k\}$ is also 0. We get
\be
\begin{split}
\liminf_{k\to \infty} F''(u_k)v_k^2 & = \liminf_{k\to \infty} \left[ \sum_{i=1}^n\gamma_i \intT v_{k,i}^2 dt + \intO z_k(T)^2 dx \right] \\
 &\geq \left( \min_{i=1,\dots,n} \gamma_i \right) \liminf_{k\to \infty} \|v_k\|^2_2. 
\end{split}
\ee  
This yields \eqref{liminfD2J0} for $\displaystyle\Lambda :=  \min_{i=1,\dots,n} \gamma_i > 0,$ and then the proof is concluded. 
\if{We have
\be
\iint_Q p_k{\rm div} ( z_k b \otimes v_k) dxdt = \intT\int_{\Gamma} p_kz_k b \otimes v_k \cdot n ds dt - \iint_Q \nabla p_k \cdot (b\otimes v_k)z_k dx dt.
\ee
}\fi
\end{proof}

Now we come to the main result of our paper.

\begin{theorem}[Second order sufficient condition]
\label{SOSC}
Let $\ub \in \Uad$ be such that the first order condition \eqref{FirstAbstract} is satisfied, and
\[
F''(\ub)v^2 >0,\quad \text{for all } v \in C(\ub) \backslash \{0\}.
\]
Under the assumptions \ref{Hypb} and \ref{A6.1}, there exist $\eps>0$ and $\delta >0$ such that the quadratic growth condition
\[
J(\ub) + \frac{\delta}{2} \|u-\ub\|_2^2 \leq F(u),\quad  \text{for all } u \in \Uad \cap B^2_\eps(\ub),
\]
holds, hence $\bar u$ is locally optimal in the sense of $L^2(0,T;\mathbb{R}^n)$.
\end{theorem}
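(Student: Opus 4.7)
The proof is essentially a direct application of the abstract sufficient condition in Theorem \ref{SOSCCT} to our concrete control problem, since all the heavy machinery has already been set up in the preceding propositions. My plan is to identify the abstract objects with their concrete counterparts, verify that every hypothesis of Theorem \ref{SOSCCT} is met, and read off the conclusion.

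First, I would invoke the identifications $U_\infty = \mathcal{A} := L^\infty(0,T;\cR^n)$, $U_2 := L^2(0,T;\cR^n)$, $\mathcal{J} := F$ and $\mathcal{K} := \Uad$, so that \eqref{calP} becomes our problem \eqref{P}. The continuous embedding $U_\infty \hookrightarrow U_2$ is clear since $[0,T]$ is bounded, and $\Uad$ is a nonempty convex subset of $U_\infty$ by \eqref{uad} and \eqref{uconstraint}. The open set $\mathcal{A} = L^\infty(0,T;\cR^n)$ trivially contains $\mathcal{K}$.

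Next, I would check the three conditions \ref{C1}, \ref{C2}, \ref{C3}. By Corollary \ref{CorGCinfty}, $G$ is of class $C^\infty$ from $L^\infty(0,T;\cR^n)$ to $W(0,T)$, and the cost $J$ is quadratic in $(\rho,u)$, so $F = J \circ (G,\mathrm{id})$ is of class $C^\infty$, in particular $C^2$, on $\mathcal{A}$. The extension properties of $F'(u)$ and $F''(u)$ to $\Utwo$, together with the weak-strong continuity property \eqref{limF'}, were already verified in Proposition \ref{PropA}. The two liminf properties \eqref{liminfD2J} and \eqref{liminfD2J0}, with the positive constant $\Lambda = \min_i \gamma_i > 0$ guaranteed by Assumption \ref{A6.1}(iii), were established in Proposition \ref{PropC3}, using the higher regularity of the state from Theorem \ref{higher_reg1} (which in turn uses Assumptions \ref{Hypb} and \ref{A6.1}).

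Finally, with \ref{C1}--\ref{C3} in force, with $\bar u$ satisfying the first order necessary condition \eqref{FirstAbstract} (which is exactly the form \eqref{FONCJ} for $\mathcal{J} = F$ and $\mathcal{K} = \Uad$), and with the strict positivity $F''(\bar u)v^2 > 0$ on $C(\bar u)\setminus\{0\}$, Theorem \ref{SOSCCT} yields the existence of $\varepsilon > 0$ and $\delta > 0$ such that
\[
F(\bar u) + \frac{\delta}{2}\|u-\bar u\|_2^2 \le F(u) \quad \text{for all } u \in \Uad \cap B^2_\varepsilon(\bar u),
\]
which, recalling $F(u) = J(G(u),u)$ and $J(\bar u) = J(G(\bar u),\bar u)$, is the desired quadratic growth. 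There is no real obstacle at this stage: the entire difficulty was absorbed into Propositions \ref{PropA} and \ref{PropC3}, so the proof reduces to citing them and invoking Theorem \ref{SOSCCT}.
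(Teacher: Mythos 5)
Your proof is correct and follows exactly the paper's own argument: the paper likewise disposes of this theorem by citing Theorem \ref{SOSCCT} together with Propositions \ref{PropA} and \ref{PropC3}, which verify conditions \ref{C1}--\ref{C3}. Your additional spelling-out of the identifications $U_\infty = \Uinfty$, $U_2 = \Utwo$, $\mathcal{J} = F$, $\mathcal{K} = \Uad$ is consistent with the setup the paper fixes just after Theorem \ref{SOSCCT}.
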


\begin{proof}
The result  follows from the application of  Theorem \ref{SOSCCT} to our optimal control problem \eqref{P}.
The associated assumptions are fulfilled in view of the second order continuous differentiability of $F$ in $\Utwo$ and  Proposition \ref{PropC3}.
\end{proof}

\begin{remark}We should mention that also  \cite[Corollary 2.6]{CasasTroeltzsch2012} on local uniqueness of $\bar u$ and \cite[Theorem 2.7]{CasasTroeltzsch2012} regarding using an alternative critical cone can be directly transferred to our optimal control problem, since the required assumptions \ref{C1}-\ref{C3} are satisfied.
\end{remark}

\section{Appendix} \label{appendix}
\setcounter{equation}{0}

In this section, we prove the regularity result of Theorem \ref{higher_reg1}. First, 
we show that under the Assumptions \ref{Hypb} and \ref{A6.1} the state equation for $\rho$ can be transformed to a linear heat equation with homogeneous Neumann condition and right-hand side in $L^2(Q)$. This is the key for higher regularity of $\rho$.

To see this, we begin with the case where  Assumption \ref{Hypb} and condition \eqref{orthogonality} are fulfilled. Then,  for any control function $u \in L^\infty(0,T;\mathbb{R}^n),$  ${\rm div }(\rho(c+b\otimes u)) = {\rm div} (\rho B[u])\in L^2(Q)$ and
\[
- \int_\Omega \rho(c+ b\otimes u) \cdot  \nabla \varphi\, dx=\int_\Omega{\rm div }\big( \rho(c+b\otimes u) \big) \varphi \, dx  \quad \text{for all } 
\varphi \in H^1(\Omega),\, u \in \mathbb{R}^n,
\]
since  $\rho(c+b\otimes u) \cdot n = \rho B[u]\cdot n = 0$
on $\Gamma.$ Therefore, the state equation \eqref{FP},\eqref{initcond} reduces to 
the linear heat equation
\be \label{heateq}
\begin{split}
\partial_t \rho - \nu \Delta\rho & = \hat f,\\
\rho(0)&=\rho_0,\\
\nabla\rho\cdot n &=0,
\end{split}
\ee
where
\begin{equation} \label{hatf1}
\hat f:= {\rm div} (\rho B[u]) \in L^2(Q).
\end{equation} 

Next, we confirm the reduction to an equation of the form \eqref{heateq}, if $c$ does not
satisfy \eqref{orthogonality} but fulfils \eqref{potential}. In this case, we follow an idea of 
\cite[proof of Proposition 2.1]{BreitenKunischPfeiffer2018}, and apply the transformation $w = \exp{(V/\nu)}\rho$. Notice that $w$ enjoys the same regularity as $\rho$, i.e. $w \in W(0,T)$. We find
\[
\nabla \rho = -\frac{1}{\nu} \exp{(-V/\nu)} \nabla V w +  \exp{(-V/\nu)}\nabla w=  \exp{(-V/\nu)} \, \left(-\frac{1}{\nu}c \, w +  \nabla w\right).
\]
Inserting this in the boundary condition of the state equation, we arrive at
\begin{equation*}
\begin{split}
&0=\nu \nabla \rho \cdot n + \rho B[u] \cdot n \\
&= \exp{(-V/\nu)} (-c \, w +  \nu \nabla w)\cdot {n} + \exp{(-V/\nu)}w (c + b\otimes u)\cdot n = \nu \exp{(-V/\nu)}\nabla w\cdot {n}
\end{split}
\end{equation*}
that holds true if, and only if, $\nabla w \cdot n= 0$ on $\Gamma$. Therefore, $w$ satisfies homogeneous Neumann boundary conditions. Moreover, inserting the transformation of $\rho$ in the state equation, we obtain, after some computations by the product rule applied to ${\rm div}\,\nabla  \big(\exp{(-V/\nu)}w)$ that $w$ satisfies the linear heat 
equation \eqref{heateq} with right-hand side
\begin{equation} 
\label{hatf2}
\hat f = - c \cdot \nabla w - \frac{1}{\nu} wc\cdot b\otimes u + {\rm div} (w b\otimes u).
\end{equation}
Since $w \in W(0,T)$ and $\nabla w \cdot n= 0$ on $\Gamma$, all summands above  belong to $L^2(Q)$.

Next, we provide a regularity result for equation \eqref{heateq} that can be found, for homogeneous Dirichlet boundary conditions and  a smooth boundary $\Gamma$, in  Evans \cite[Theorem 5, page 360]{Evans2010book}  or Ladyzhenskaya
{\em et al.} \cite{Ladyzhenskaya_etal1968}. For the convenience of the reader, we will prove the part on $C([0,T];H^1(\Omega))$-regularity in Lipschitz domains. It is difficult to find an associated reference, although the result is known. The proof was communicated to us by Joachim Rehberg (WIAS Berlin).

\begin{lemma} 
\label{Lmaxparabreg} 
If $\Omega$ is a bounded Lipschitz domain, $\hat f$ is an arbitrary function of $L^2(Q)$ and $\rho_0$ belongs to $H^1(\Omega)$,
then the unique weak solution $\rho$ of the linear heat equation \eqref{heateq} enjoys the higher regularity $\rho \in C([0,T];H^1(\Omega)).$
Moreover, the estimate
\be \label{a_priori_est}
\|\rho\|_{W(0,T)} + \|\rho\|_{C([0,T];H^1(\Omega))} \le C\, \big(\|\hat f\|_{L^2(Q)} + \|\rho_0\|_{H^1(\Omega)} \big)
\ee
is satisfied with some $C > 0$ not depending on $\hat f$ neither on $\rho_0$. 

\if{
\textcolor{green}{
If $\Omega$ is in addition of class $C^{1,1}$ or convex, then
the higher regularity $\rho \in W(0,T;H^2(\Omega),L^2(\Omega))$ is fulfilled, and the associated estimate
\[
\|\rho\|_{L^2(0,T;H^2(\Omega))} \le C\, (\|\hat f\|_{L^2(Q)} + \|\rho_0\|_{H^1(\Omega)})
\]
holds with some $C > 0$ not depending on $\hat f$ neither on $\rho_0$.}
}\fi
\end{lemma}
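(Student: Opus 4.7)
The plan is to construct Galerkin approximations $\rho_m$ adapted to the Neumann Laplacian and exploit the fact that its eigenfunctions are orthogonal both in $L^2(\Omega)$ and in the Dirichlet form, so as to obtain $C([0,T];H^1(\Omega))$-continuity without invoking $H^2$-regularity, which is unavailable on a general Lipschitz domain.

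Concretely, since $\Omega$ is bounded Lipschitz, the form $a(u,v)=\int_\Omega \nabla u\cdot \nabla v\,dx$ with form domain $H^1(\Omega)$ is closed, and the associated self-adjoint operator (the Neumann Laplacian) has compact resolvent. Let $(e_k)_{k\ge 0}$ be a corresponding $L^2$-orthonormal basis of eigenfunctions with eigenvalues $0=\lambda_0\le\lambda_1\le\cdots\to\infty$, put $V_m:=\mathrm{span}(e_0,\ldots,e_m)$, and let $P_m$ be the $L^2$-orthogonal projection onto $V_m$. The Galerkin approximation $\rho_m\in C^1([0,T];V_m)$ with $\rho_m(0)=P_m\rho_0$ is well-defined as a linear ODE system. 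Testing the Galerkin identity with $\varphi=\partial_t\rho_m(t)\in V_m$ and applying Young's inequality after integration in time yields
\[
\tfrac12\|\partial_t\rho_m\|_{L^2(Q)}^2+\tfrac{\nu}{2}\sup_{t\in[0,T]}\|\nabla\rho_m(t)\|_{L^2}^2\le \tfrac{\nu}{2}\|\nabla\rho_0\|_{L^2}^2+\tfrac12\|\hat f\|_{L^2(Q)}^2,
\]
which, combined with the $L^\infty(0,T;L^2(\Omega))$-estimate already available from the $W(0,T)$-theory, gives the uniform bound
\[
\|\rho_m\|_{C([0,T];H^1(\Omega))}+\|\partial_t\rho_m\|_{L^2(Q)}\le C\bigl(\|\rho_0\|_{H^1(\Omega)}+\|\hat f\|_{L^2(Q)}\bigr).
\]

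The non-routine step is to upgrade this to a Cauchy property of $(\rho_m)$ in $C([0,T];H^1(\Omega))$. Since $P_m\rho_{m'}$ itself solves the $V_m$-Galerkin problem with datum $P_m\rho_0$, uniqueness forces $\rho_m=P_m\rho_{m'}$ for $m\le m'$; in particular $\zeta:=\rho_{m'}-\rho_m\in V_m^\perp$. Decomposing an arbitrary $\varphi\in V_{m'}$ into $P_m\varphi\in V_m$ and $(I-P_m)\varphi\in V_m^\perp$ and using the double orthogonality of the $e_k$ in $L^2(\Omega)$ and in $a(\cdot,\cdot)$, one finds
\[
(\partial_t\zeta,\varphi)_{L^2}+\nu(\nabla\zeta,\nabla\varphi)_{L^2}=\bigl((I-P_m)\hat f,\varphi\bigr)_{L^2},\qquad\forall \varphi\in V_{m'}.
\]
Testing with $\varphi=\partial_t\zeta\in V_{m'}$ and mimicking the energy computation above yields
\[
\tfrac{\nu}{2}\sup_{t\in[0,T]}\|\nabla\zeta(t)\|_{L^2}^2\le \tfrac{\nu}{2}\|\nabla(P_{m'}-P_m)\rho_0\|_{L^2}^2+\tfrac12\|(I-P_m)\hat f\|_{L^2(Q)}^2,
\]
whose right-hand side vanishes as $m,m'\to\infty$ because $\rho_0\in H^1(\Omega)$ and $\hat f\in L^2(Q)$. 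Hence $(\rho_m)$ is Cauchy in $C([0,T];H^1(\Omega))$; its limit is the unique weak solution $\rho$ of \eqref{heateq} by Theorem \ref{ThmExistSol}, and \eqref{a_priori_est} follows by passing to the limit in the uniform bound of the previous paragraph.

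The main obstacle is this last step: on a Lipschitz domain the standard smooth-domain trick of testing with $-\Delta\rho\in L^2(\Omega)$ is not available, so one cannot argue directly on the continuous problem. The workaround is precisely the simultaneous $L^2$- and form-orthogonality of the Neumann-Laplacian eigenfunctions, which makes the cross terms in the difference equation for $\zeta=\rho_{m'}-\rho_m$ cancel and leaves only the tail forcing $(I-P_m)\hat f$ to be controlled.
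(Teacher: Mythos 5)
Your proof is correct, and it takes a genuinely different route from the paper. The paper argues abstractly: the Neumann Laplacian generates an analytic semigroup on $L^2(\Omega)$, hence has maximal parabolic regularity, so the solution lies in $W^{1,2}(0,T;L^2(\Omega))\cap L^2(0,T;{\rm dom}\,\Delta)$, which embeds into $C([0,T];(L^2(\Omega),{\rm dom}\,\Delta)_{\frac12,2})$; the key step is then the identification of this interpolation space with the form domain $H^1(\Omega)$, valid because $\Delta$ is self-adjoint. You instead run a Galerkin scheme on the Neumann eigenbasis, test with $\partial_t\rho_m$, and upgrade the uniform bound to a Cauchy property via the nesting identity $\rho_m=P_m\rho_{m'}$ and the tail estimates for $(I-P_m)\hat f$ and $(P_{m'}-P_m)\rho_0$. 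Both arguments hinge on self-adjointness in the same essential way — the paper through ${\rm dom}(-\Delta+I)^{1/2}=H^1(\Omega)$, you through the simultaneous orthogonality of the eigenfunctions in $L^2(\Omega)$ and in the Dirichlet form, which is what makes $\|\nabla P_m\rho_0\|_{L^2}\le\|\nabla\rho_0\|_{L^2}$ and the cancellation in the equation for $\zeta$ work. Your approach is more elementary and self-contained (spectral theorem plus energy estimates, no interpolation theory or maximal-regularity machinery), and it additionally delivers $\partial_t\rho\in L^2(Q)$ explicitly; the paper's approach yields more as a byproduct (membership of $\rho(t)$ in ${\rm dom}\,\Delta$ for a.e.\ $t$, and the H\"older/compactness statements of Remark \ref{Rem6.1}, which are used later). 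Two small cosmetic points: the displayed energy inequality should really bound each of the two left-hand terms separately by the right-hand side (or carry a factor $2$), and the initial datum in it should be $\|\nabla P_m\rho_0\|_{L^2}\le\|\nabla\rho_0\|_{L^2}$; neither affects the argument.
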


\begin{proof} The Laplace operator $\Delta$ is generated in $L^2(\Omega)$ by the bilinear form
\[
(\psi,\varphi) \mapsto  \int_\Omega \nabla \psi \cdot \nabla \varphi \, dx,
\]
defined on $H^1(\Omega) \times H^1(\Omega)$, cf. \cite[chpt. VI]{Kato1980} or \cite[chpt. 1]{Ouhabaz2005}. This operator, induced in 
$L^2(\Omega)$, is non-positive and self-adjoint, hence it generates an analytic semigroup in $L^2(\Omega)$.

It is known that the negative of any linear operator in a Hilbert space that generates an analytic semigroup has maximal parabolic regularity. In our setting, this means the following: if $S$  is an arbitrary bounded or unbounded real interval, for every $f \in L^2(S;L^2(\Omega))$ and each initial value $y_0$ from the real interpolation space $\big(L^2(\Omega),{\rm dom}_{L^2(\Omega)}\Delta\big)_{\frac{1}{2},2},$ there exists a unique $y \in W^{1,2}(S;L^2(\Omega)) \cap L^2(S;{\rm dom}_{L^2(\Omega)}\Delta)$ such that 
\[
\partial_t y -  \Delta y = f, \quad  y(0) = y_0.
\]
For this existence result, we refer to \cite[chpt. 1.3]{Ashyralyev_Sobolevski1994}, in particular to chpt. 1.3.3. Moreover, one has (see e.g. \cite[chpt. III.4.10]{Amann1995})
\be \label{embedding0}
W^{1,2}(S;L^2(\Omega)) \cap L^2(S,{\rm dom}_{L^2(\Omega)}\Delta) \hookrightarrow C\big(\bar{S}; (L^2(\Omega),{\rm dom}_{L^2(\Omega)}\Delta)_{\frac{1}{2},2}\big).
\ee

The operator $I -  \Delta$ is positive and self-adjoint with lower spectral bound $1$. Therefore, thanks to the spectral theory, its pure imaginary powers exist as bounded operators in $L^2(\Omega)$. In view of this, one has (see \cite[chpt. 1.18.10]{Triebel1978})
\[
\begin{aligned}
\big(L^2(\Omega),{\rm dom}_{L^2(\Omega)}\Delta\big)_{\frac{1}{2},2}& = \big(L^2(\Omega),{\rm dom}_{L^2(\Omega)}(-\Delta+I)\big)_{\frac{1}{2},2}\\
& = \big[L^2(\Omega),{\rm dom}_{L^2(\Omega)}(-\Delta+I)\big]_{\frac{1}{2}} = {\rm dom}_{L^2(\Omega)}
(-\Delta+I)^{\frac{1}{2}}.
\end{aligned}
\]
Since $\Delta$ is self-adjoint, the space ${\rm dom}_{L^2(\Omega)}
(-\Delta+I)^{\frac{1}{2}}$ is equal to its form-domain, i.e. $H^1(\Omega)$, \cite[chpt. VI 2.6]{Kato1980}.
Therefore, \eqref{embedding0} implies 
\be \label{embedding}
W^{1,2}(S;L^2(\Omega)) \cap L^2(S,{\rm dom}_{L^2(\Omega)}\Delta) \hookrightarrow C(\bar S;H^1(\Omega)).
\ee
Exactly the same arguments are true for $\nu \Delta$ instead of $\Delta$, except that the lower spectral
bound is now estimated from below by  $\nu$. These statements are valid in arbitrary domains $\Omega$.  

The estimate \eqref{a_priori_est} is shown as usual by considering a graph norm.
\end{proof}

\noindent {\bfseries Proof of Theorem \ref{higher_reg1}}: We have pointed out above that the state $\rho \in W(0,T)$ solves the linear heat equation \eqref{heateq} with a right-hand side $\hat f \in L^2(Q)$. Depending on the particular Assumption 
\ref{A6.1}-(i) on $c$, the function $\hat f$ is given by \eqref{hatf1} or by \eqref{hatf2}.  Now the claimed regularity follows from Lemma \ref{Lmaxparabreg}. \hfill $\Box$
\vspace{3ex}

\begin{remark} \label{Rem6.1} In addition to the continuous embedding \eqref{embedding}, also the embedding 
\[W^{1,2}(S;L^2(\Omega)) \cap L^2(S;{\rm dom}_{L^2(\Omega)}\Delta) \hookrightarrow C^{1/2}(\bar{S};L^2(\Omega))
\]
is continuous. Therefore, by interpolation arguments, cf. Triebel \cite{Triebel1978}  the same holds for the embedding
\[
W^{1,2}(S;L^2(\Omega)) \cap L^2(S;{\rm dom}_{L^2(\Omega)}\Delta) \hookrightarrow C^{\alpha}(\bar{S};H^{1/2-\alpha}(\Omega)),
\]
with $0 < \alpha < \frac{1}{2}$. In view of this, the embedding 
\[
W^{1,2}(S;L^2(\Omega)) \cap L^2(S;{\rm dom}_{L^2(\Omega)}\Delta) \hookrightarrow C(\bar{S};L^2(\Omega)),
\]
is compact. Therefore, for the heat equation \eqref{heateq}, the mapping $\hat f \mapsto \rho$ is compact from $L^2(Q)$ to $C([0,T];L^2(\Omega)) $.
\end{remark}

\section*{Acknowledgements}

We thank Joachim Rehberg (WIAS Berlin) for communicating to us the proof of Lemma 
\ref{Lmaxparabreg}. Moreover, we are grateful to Hannes Meinlschmidt (RICAM Linz) for his support in
proving the statements of Remark \ref{Rem6.1}.
We also acknowledge the referees for their useful and detailed remarks that helped us to essentially improve the manuscript.

\bibliographystyle{plain}
\bibliography{generalsoledad,fredi}

\end{document}